\newif\ifshownotes
\newif\ifarxiv
\date{}
\newtheoremstyle{named}{}{}{\itshape}{}{\bfseries}{.}{.5em}{\thmnote{#1: #3}}
\theoremstyle{named}
\author[]{Konstantin Donhauser}
\author[]{Mingqi Wu}
\author[]{Fanny Yang}
\affil[]{Department of Computer Science, ETH Zürich}
\title{How rotational invariance of common kernels prevents generalization in high dimensions}
\begin{document}

\maketitle
\setcounter{page}{1}

\begin{abstract}

  Kernel ridge regression is well-known
to achieve minimax optimal rates in low-dimensional settings. However,
its behavior in high dimensions is much less understood.  Recent work
establishes consistency for kernel regression under certain
assumptions on the ground truth function and the distribution of the
input data. In this paper, we show that the rotational invariance
property of commonly studied kernels (such as RBF, inner product
kernels and fully-connected NTK of any depth) induces a bias towards
low-degree polynomials in high dimensions. Our result implies a lower
bound on the generalization error for a wide range of distributions
and various choices of the scaling for kernels with
different eigenvalue decays.  This lower bound suggests that general
consistency results for kernel ridge regression in high dimensions
require a more refined analysis that depends on the structure of the kernel beyond its eigenvalue decay.

\end{abstract}

\section{Introduction}
Traditional analysis establishes good generalization properties of
kernel ridge regression when the dimension $d$
is relatively small compared to the number of samples $n$.
These minimax optimal and consistency results however become less
powerful for modern data sets with large $d$ close to $n$.
High-dimensional asymptotic theory
\cite{Buhlmann11,Wainwright19} aims to fill this gap by providing
bounds that assume $d,n \to \infty$ and are often much more predictive
of practical observations even for finite $d$. 


While recent work \cite{Dobriban15,Hastie19,Bartlett20}
establishes explicit asymptotic upper bounds for the bias
and variance for high-dimensional linear regression, the results for
kernel regression are less conclusive in the regime $d/n^{\beta}  \to c$ with $\beta \in (0,1)$. In particular, even though several papers~\cite{Ghorbani19, Ghorbani20,Liang20b} show that the variance decreases with the dimensionality of the data, the bounds on the bias are somewhat inconclusive.
On the one hand,  \citet{Liang20b} prove asymptotic consistency for ground truth functions 
with asymptotically bounded Hilbert norms for neural tangent kernels (NTK) and inner product (IP) kernels. In contrast, \citet{Ghorbani19, Ghorbani20} show that for uniform distributions on the product of two spheres, consistency cannot be achieved \emph{unless} the ground truth is a low-degree polynomial. 
This \emph{polynomial approximation barrier} can also be observed for random feature and neural tangent regression \cite{Mei19,Mei21,Ghorbani19}.

Notably, the two seemingly contradictory consistency results hold for
different distributional settings and are based on vastly different proof
techniques. While \cite{Liang20b} proves consistency for general input distributions
including isotropic Gaussians,
the lower bounds in the papers \cite{Ghorbani19, Ghorbani20} are limited to
data that is uniformly sampled from the product of two spheres.
Hence, it is a natural question to ask whether the
polynomial approximation barrier is a more general phenomenon or
restricted to the explicit settings studied in \cite{Ghorbani19, Ghorbani20}.
Concretely, this paper addresses the following question
\begin{center}
  \emph{ Can we overcome the polynomial approximation barrier
    when considering different high-dimensional input distributions,
     eigenvalue decay rates or scalings of the kernel function?}
\end{center}
We unify previous
distributional assumptions in one proof framework
and thereby characterize how the rotational
invariance property of common kernels induces a bias towards low-degree polynomials.
Specifically, 
we show that the \emph{polynomial approximation barrier} persists for
\begin{itemize}
\item  a broad range of common rotationally invariant kernels
  such as radial basis functions
(RBF) with vastly different eigenvalue decay rates, inner product kernels and 
NTK of any depth~\cite{Jacot20}.
\item general input distributions including anisotropic Gaussians
  where the degree of the polynomial depends only on the growth of $\effdim := \trace(\inputcov)/\opnorm{\inputcov}$
  and not on the specific structure of $\inputcov$. In particular, we cover the distributions studied in previous related works \cite{Ghorbani19,Ghorbani20,Liang20b,Liang20a,Liu20}.
  \item different scalings $\bw$ with kernel function $\kernf_{\bw}(\xk,\xkd) = \kernf(\frac{\xk}{\sqrt{\bw}},\frac{\xkd}{\sqrt{\bw}})$  beyond the classical choice of $\bw \asymp \effdim$.
\end{itemize}

As a result, this paper demonstrates that the polynomial approximation barrier is a general high-dimensional phenomenon for rotationally invariant kernels, restricting the set of functions for which consistency can at most be reached to low degree polynomials. 

Rotationally invariant kernels are a natural choice if no prior
information on the structure of the ground truth is available, as they
treat all dimensions equally. Since our analysis covers a broad range
of distributions, eigenvalue decays and different scalings,
our results motivate future work to focus on the symmetries
respectively asymmetries of the kernel incorporating prior knowledge
on the structure of the high-dimensional problem,
e.g. \cite{Arora19,Shankar20,Mei21b}.


This paper is organized as follows.
First of all, we show in Section~\ref{sec:priorkernelwork} 
that the bounded norm assumption that previous consistency results \cite{Liang20b,Liang20a} rely on is violated as $d\to \infty$ even for simple functions
such as $\ftrue(x) = e_1^\top x$.  We then introduce our generalized setting in Section~\ref{sec:2} and present our main results in Section~\ref{sec:3} where we show a lower bound on the bias that increases with the dimensionality of the data.  Finally, in Section~\ref{sec:exp} we empirically illustrate how the bias dominates the risk in high dimensions and therefore limits the performance of kernel regression. As a result, we argue that it is crucial to incorporate prior knowledge of the ground truth function (such as sparsity) in high-dimensional kernel learning even in the noiseless setting and empirically verify this on real-world data.

\section{Problem setting}
\label{sec:2}

In this section, we briefly introduce kernel regression estimators in
reproducing kernel Hilbert spaces and subsequently introduce our assumptions on 
the kernel, data distribution and high-dimensional regime.


\subsection{Kernel ridge regression}
We consider nonparametric regression in a \emph{reproducing kernel
  Hilbert space} (RKHS, see e.g. \cite{Wahba90,Smola98})
with functions on the domain $\XX \subset \R^d$ induced by a positive
semi-definite kernel $\kernf: \XX \times \XX \to \R$. That is, for any
set of input vectors $\{\xind{1}, \cdots,\xind{m}\}$ in $\XX$, the
empirical kernel matrix $\kermat$ with entries $\kermat_{i,j} =
\kerfunc{\xind{i},\xind{j}}$ is positive semi-definite.  We denote by
$\langle.,.\rangle_{k}$ the corresponding inner product of the Hilbert
space and by $\| . \|_{\Hilbertspace} := \sqrt{\langle.,.\rangle_{k}}$
the corresponding norm.



We observe tuples of input vectors and response variables $(x,y) $
with $x \in \XX$ and $y \in \R$.  Given $\numobs$ samples,
we consier the ridge regression estimator 
\begin{equation}
\begin{split}
\label{eq:krr}
    \fhatridge = \underset{f \in \Hilbertspace}{\arg\min} ~~\sum_{i=1}^n \left(\y_i - f(\xind{i})\right)^2 + \lambda \| f \|^2_{\Hilbertspace},
    \end{split}
\end{equation} 
with $\lambda>0$ and the minimum norm interpolator (also called the kernel \say{ridgeless} estimate) 
 \begin{equation}
 \label{eq:constraint_min_p}
     \begin{split}
         \hat{f}_0 = \underset{f \in \Hilbertspace}{\arg\min} ~ \| f \|_{\Hilbertspace} ~~ \textrm{such that}~~ \forall i:~f(\xind{i}) = \y_i
     \end{split}
 \end{equation}
 that can be obtained as the limit of the ridge estimate $\hat{f}_0
 =\lim_{\lambda\to 0} \fhatridge$ for fixed $n$. It is well-known that the ridge
 estimator can attain consistency as $n\to\infty$ for some sequence of
 $\lambda$ such that $\frac{\lambda}{n} \to 0$. Recently,
 some works 
 \cite{Liang20b,Liang20a,Ghorbani19,Ghorbani20} have also analyzed the
 consistency behavior of ridgeless estimates motivated by the curiously good
 generalization properties of neural networks with zero training
 error.  

For evaluation, we assume that the observations are i.i.d. samples
from a joint distribution ${(\xind{i}, y_i)_{i=1}^n \sim
\jointp{X}{Y}}$ and refer to $\ftrue(x) := \EE [y \mid X= x]$ as
the \emph{ground truth} function that minimizes the population square
loss $\EE (Y-f(X))^2$.  We evaluate the estimator using the population
risk conditioned on the input data $\datamatrix$
\begin{align}
    \label{eq:biasvariance}
    \begin{split}
    &\Risk(\fhatridge) \define \EEobs \|\fhatridge - \ftrue\|_{\Ell_2(\PP_X)}^2 
    = \underbrace{\|  \EEobs \fhatridge - \ftrue  \|^2_{\Ell_2(\PP_X)} }_{=: \text{ Bias }\Bias}+ \underbrace{ \EEobs \| \EEobs \fhatridge - \fhatridge \|^2_{\Ell_2(\PP_X)}}_{=: \text{ Variance } \Var}. \nonumber
        \end{split}
\end{align}
where $\EEobs$ is the conditional expectation over the observations $\y_i \sim
\PP(Y|X=\x_i)$. In particular, when $y = \ftrue(x) + \eps$, $\EEobs
\fhatridge$ is equivalent to the noiseless estimator with
$\eps=0$. 

Note that consistency in terms of
$\Risk(\fhatridge)\to 0$ as $n \rightarrow \infty$ can only be reached
if the bias vanishes.  In this paper, we lower bound the bias $\Bias$
which, in turn, implies a lower bound on the risk and the inconsistency of the estimator. The theoretical results in
Section~\ref{sec:3} hold for both ridge regression and minimum
norm interpolation. However, it is well known that the ridge penalty controls the bias-variance trade-off, and hence we are primarily interested in lower bounds of the bias for the minimum
norm interpolant.

\subsection{Prior work on the consistency of kernel regression}
\label{sec:priorkernelwork}

For ridge regression estimates in RKHS, a rich body of work
shows consistency and rate optimality when
appropriately choosing the ridge parameter $\lambda$
both in the non-asymptotic setting, e.g.~\cite{Caponnetto07}, and the classical asymptotic
setting, e.g.~\cite{Christmann07}, as $n \to \infty$.  

Similar results have also been shown for high-dimensional asymptotics, where recent papers on minimum norm
interpolation in kernel regression \cite{Liang20b,Liang20a} explicitly show how the bias vanishes when 
the ground truth function has bounded Hilbert norm as $d,n\to\infty$.
Even though this
assumption is perfectly reasonable for a fixed ground truth and
Hilbert space, its plausibility is less clear for a sequence of functions as $d\to \infty$.\footnote{Although \citet{Liu20} replace the bounded Hilbert norm
  assumption with a weaker bounded source condition, we
  expect this condition not to hold with increasing dimension either.
  We defer the detailed discussion to future work.}
After all, the Hilbert space and thus also the norm change with
$\dim$.
In fact, we now show that even innocuous functions have diverging
Hilbert norm as the dimension increases. 
In particular, for tensor product kernels including exponential inner product kernels (also studied in \cite{Liang20b,Liang20a}) defined on $x,x'\in\XX^{\otimes d} \subset \RR^d$, 
we can show the following lemma. 

\begin{lemma}[Informal]
  \label{lm:rkhs}
For any $f$ that is a non-constant sparsely parameterized product
function \\${f(\xk) = \prod_{j=1}^m f_j(\xel{j})}$ for some fixed $m \in
\N_+$, 
\begin{equation*}
  \| f \|_{\Hilbertspace_d} \overset{d \to \infty}{\to} \infty.
\end{equation*}
\end{lemma}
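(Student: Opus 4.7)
The plan is to exploit the defining feature of a tensor product kernel $k_d(x,x') = \prod_{j=1}^d k_0(x_j, x'_j)$: its RKHS $\mathcal{H}_d$ is the Hilbert tensor product of $d$ copies of the base RKHS $\mathcal{H}_0$, so for any pure product $f(x) = \prod_{j=1}^d f_j(x_j)$ with $f_j \in \mathcal{H}_0$, one has the norm factorization $\|f\|_{\mathcal{H}_d}^2 = \prod_{j=1}^d \|f_j\|_{\mathcal{H}_0}^2$. First, I pad the sparsely parameterized function into a pure product by writing $f(x) = \prod_{j=1}^m f_j(x_j)\cdot\prod_{j=m+1}^d 1$, so that the norm identity becomes $\|f\|_{\mathcal{H}_d}^2 = \bigl(\prod_{j=1}^m \|f_j\|_{\mathcal{H}_0}^2\bigr)\cdot \|1\|_{\mathcal{H}_0}^{2(d-m)}$.

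\textbf{Per-coordinate computation for the exponential kernel.} For the prototypical example $k_0(u,v) = \exp(uv/d)$ arising from the high-dimensional rescaling of an exponential inner product kernel, I would diagonalize via the Taylor expansion $k_0(u,v) = \sum_{n\geq 0} (uv)^n/(d^n n!)$. The monomials $\phi_n(u) := u^n/\sqrt{d^n n!}$ then form an ONB of $\mathcal{H}_0$. In particular $\|1\|_{\mathcal{H}_0} = \|\phi_0\|_{\mathcal{H}_0} = 1$, which kills the $(d-m)$-fold tail and leaves $\|f\|_{\mathcal{H}_d}^2 = \prod_{j=1}^m \|f_j\|_{\mathcal{H}_0}^2$. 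Expanding each $f_j(u) = \sum_{n\geq 0} a_{j,n}\, u^n$ in this basis gives $\|f_j\|_{\mathcal{H}_0}^2 = \sum_{n\geq 0} a_{j,n}^2\, d^n\, n!$; since $f$ is non-constant, at least one factor $f_j$ must have some coefficient $a_{j,k}\neq 0$ with $k\geq 1$, whence $\|f_j\|_{\mathcal{H}_0}^2 \geq a_{j,k}^2\, d^k k!\to\infty$. The remaining factors satisfy $\|f_{j'}\|_{\mathcal{H}_0}^2 \geq a_{j',0}^2 > 0$, because non-constancy of the product forces every $f_{j'}$ to be non-zero, and a non-zero constant has strictly positive norm.

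\textbf{Wrap-up and main obstacle.} Combining the two displays gives $\|f\|_{\mathcal{H}_d}^2 \to \infty$ as $d \to \infty$, proving the lemma for exponential inner product kernels. The same Mercer-style argument extends to any tensor product kernel whose base kernel $k_0$ is rescaled so that $\|1\|_{\mathcal{H}_0}$ remains bounded while $\|u \mapsto u\|_{\mathcal{H}_0}$ diverges with $d$. The genuinely delicate step is justifying the Mercer-type orthonormal expansion used in the per-coordinate calculation, specifically checking that $\{\phi_n\}$ forms a complete ONB of the base RKHS and that the series defining $\|f_j\|_{\mathcal{H}_0}^2$ coincides with the RKHS norm even when $f_j$ is not a polynomial; this is standard for reproducing kernels of entire-analytic type but should be spelled out. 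The other delicate point is pinning down the precise class of kernels covered by the informal statement beyond the exponential/inner-product setting, since the factorization step and the control $\|1\|_{\mathcal{H}_0}=1$ both rely on the specific scaling.
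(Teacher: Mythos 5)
Your proposal is correct in its core computation and shares the paper's first step --- the tensor factorization $\|f\|_{\mathcal{H}_d} = \prod_{j=1}^d \|f_j\|_{\mathcal{H}_0}$ after padding with constant factors $f_j \equiv 1$ --- but from there it takes a genuinely different and narrower route. You diagonalize the base kernel explicitly for the rescaled exponential inner product kernel $\exp(uv/d)$, obtaining $\|f_j\|_{\mathcal{H}_0}^2 = \sum_n a_{j,n}^2\, d^n n!$, so that $\|1\|_{\mathcal{H}_0}=1$ kills the $(d-m)$-fold tail and any nonconstant analytic factor diverges at an explicit polynomial rate in $d$; this buys a quantitative statement (e.g.\ $\|x\mapsto x_1\|_{\mathcal{H}_d}^2 \asymp d$, matching the trend in Figure~\ref{fig:rkhsnorm}) but ties the argument to one kernel and one scaling, as you yourself flag. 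The paper's proof (Lemma~\ref{lm:rkhsformal} in Appendix~\ref{sec:norm}) never diagonalizes: it assumes only that the base kernel $q$, which may change arbitrarily with $d$, is trace-normalized, $\int q(x,x)\,d\mu(x)\le 1$, deduces $\|1\|_{\mathcal{H}_q}\ge 1$ from Mercer's theorem, and then runs a dichotomy --- either $\|1\|_{\mathcal{H}_q}$ stays bounded away from $1$, in which case the factor $(\|1\|_{\mathcal{H}_q})^{d-1}$ alone diverges, or $\|1\|_{\mathcal{H}_q}\to 1$ along a subsequence, which forces the spectral mass to concentrate on a single eigenfunction close to the constant and then any nonconstant $f_1$ has norm at least of order $c_2/(1-\lambda_{j_d})\to\infty$. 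That abstract dichotomy is what covers, e.g., Gaussian tensor-product kernels (whose Mercer basis is Hermite rather than monomial) and arbitrary $d$-dependent rescalings; note that your proposed extension criterion (``$\|1\|_{\mathcal{H}_0}$ bounded while the norm of $u\mapsto u$ diverges'') would miss the regime in which divergence comes from the exponent $d-1$ rather than from a single-coordinate norm. Two small, fixable points in your write-up: the bound $\|f_{j'}\|_{\mathcal{H}_0}^2\ge a_{j',0}^2$ is vacuous when $f_{j'}(0)=0$, so bound instead through any nonzero coefficient, $\|f_{j'}\|_{\mathcal{H}_0}^2\ge a_{j',k}^2\,k!>0$ uniformly in $d$; and you should dispose of the case where some $f_j$ fails to lie in $\mathcal{H}_0$ by noting, as the paper does, that then $\|f\|_{\mathcal{H}_d}=\infty$ trivially.
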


\begin{figure}
    \centering
         \minipage{0.35\textwidth}
         \includegraphics[width=\linewidth]{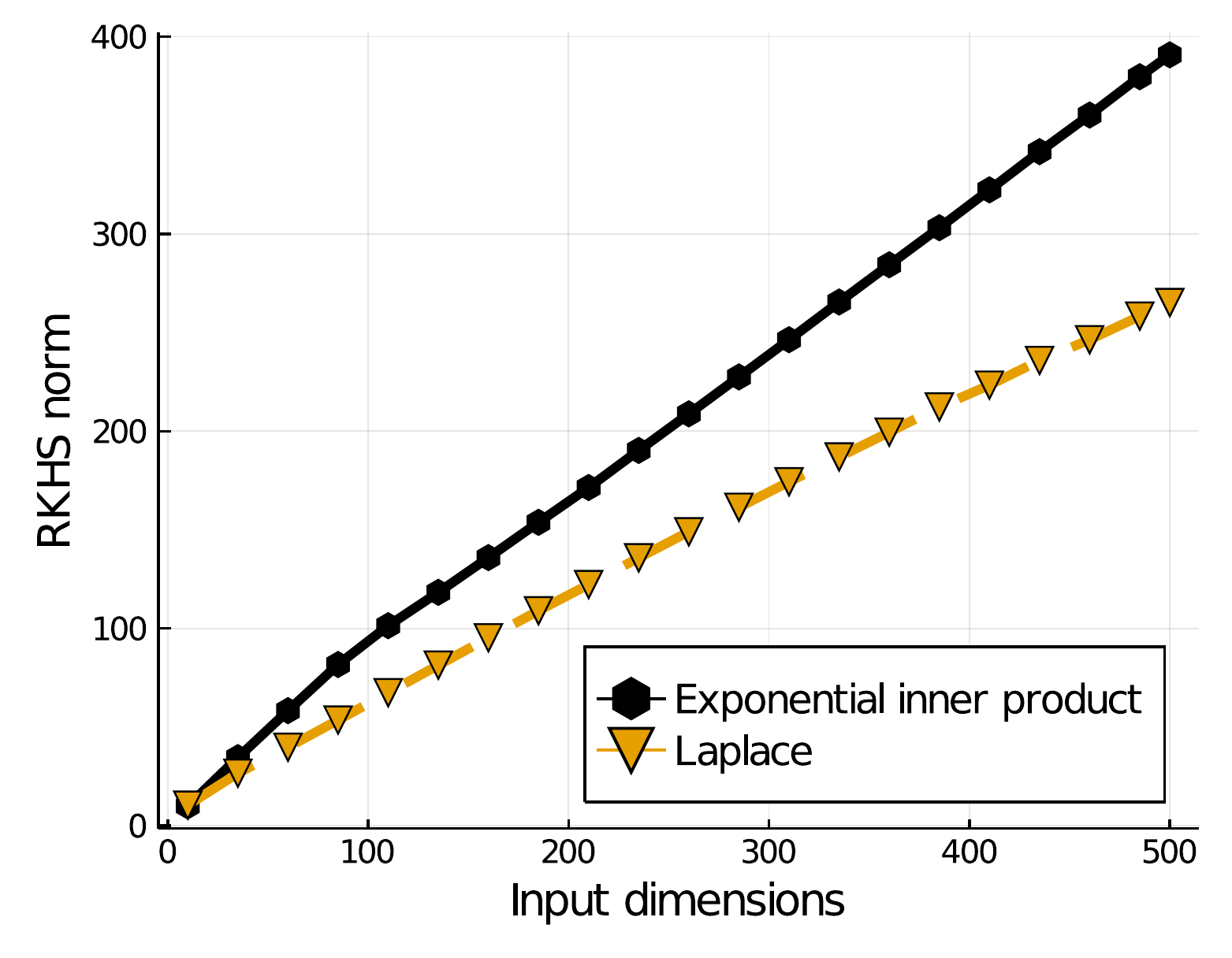}
    \endminipage
    \caption{\small{The approximation of the Hilbert norm induced by the Laplace and exponential inner product kernels of the function $\ftrue(\x) = \x_1$ plotted with respect to the dimension $d$ on the space $\XX = [0,1]^d$. See Section ~\ref{sec:expnorm} for experimental details.}}
    \label{fig:rkhsnorm}
\end{figure}

In words, for simple sequences of sparse product functions, the Hilbert norm
diverges as the dimension $d\to\infty$. The precise conditions on the
kernel and sequence $\Hilbertspace_d$ of induced Hilbert spaces can be found
in~\suppmat~\ref{sec:norm}. Figure~\ref{fig:rkhsnorm} illustrates this
phenomenon for $\fexp (x) = e_1^\top x$ for the Laplace and
exponential inner product kernel.
%

The discussion so far implies that generalization upper bounds that rely on the bounded Hilbert norm assumption become void even for simple ground truth functions. A natural follow-up question is hence: Do we actually fail to learn sparsely parameterized functions consistently or is it simply a loose upper bound?
A recent line of work by \citet{Ghorbani19, Ghorbani20}
shows that kernel
regression estimates can indeed only consistently learn polynomials of
degree at most $\frac{\log \numobs}{\log \effdim}$ as $d,n \to \infty$ (which we refer to
as the \emph{polynomial approximation barrier}).
While the results provide some intuition for the behavior of kernel
regression, the proofs heavily rely on significant simplifications
that hold for the specific distributional assumptions on the
sphere.  It is a priori unclear whether they apply to more
general settings including the ones considered in~\cite{Liang20b}.
In the next sections, relying on a different proof technique we show that the polynomial approximation barrier indeed holds for a broad spectrum of data distributions that also capture the distributions studied in the papers~\cite{Liang20b,Liang20a} and for an entire range of eigenvalue decay rates of the kernel functions (e.g. polynomial and exponential decay rates) and choices of the scaling $\bw$. As a consequence, our results suggest that the polynomial approximation barrier is strongly tied to the rotational invariance of the kernel function and not specific to the settings studied so far. 


\subsection{Our problem setting}
\label{sec:oursetting}


The framework we study in this paper covers random vectors that are
generated from a covariance matrix model, i.e. $\Xrv = \inputcov^{1/2}
\standardrv$ with vector $\standardrv$ consisting of i.i.d entries and their projections onto the $d-1$-dimensional unit sphere.
We now specify all relevant assumptions on
the kernel and data distribution.

\paragraph{Kernels}

Throughout this paper, we focus on continuous and rotationally invariant
kernels. They include a vast majority of the commonly used kernels such as fully connected NTK, RBF and inner product kernels as they only depend on the squared
Euclidean norm of $\xk$ and $\xkd$ and the inner product
$\xk^\top\xkd$.
We first present one set of assumptions on the kernel functions that are sufficient for
our main results involving local regularity of $\kernf$ around the
sphere where the data concentrates in high dimensions.

\begin{itemize}
\item[\Akonebracket] \textit{Rotational invariance and local power series expansion}: 
  The kernel function $\kernf$ is rotationally invariant and there is a function $\rotkerfunc$ such that $k(\xk.\xkd) = \rotkerfunc(\|\xk\|_2^2, \|\xkd\|_2^2, \xk^\top \xkd)$. Furthermore, $\rotkerfunc$ can be expanded as a power series of the form
  \begin{equation}
    \label{eq:kerneldefg}
    \begin{split}
    \rotkerfunc(\|\xk\|_2^2, \|\xkd\|_2^2, \xk^\top \xkd) = \sum_{j=0}^{\infty} \rotkerfunccoeffj{j}(\|\xk\|_2^2,\|\xkd\|_2^2) (\xk^\top \xkd)^j 
    \end{split}
   \end{equation}
  that converges for $\xk, \xkd$ in a neighborhood of the sphere
  $\{x\in \R^d \mid \|x\|^2 \in [1-\delta,1+\delta]\}$
  for some
   $\delta > 0$. Furthermore, all $g_i$ are positive semi-definite
   kernels.

\item[\Aktwobracket] \textit{Restricted Lipschitz
  continuity:} The restriction of $k$ on $\{(x,x)| x \in \R^d,
 \| x \|_2^2 \in [1-\delta_L, 1+\delta_L]\}$ is
  a Lipschitz continuous function for some constant $\delta_L>0$.
\end{itemize}

We show in Corollary~\ref{cor:kernels} that the kernels for which our
main results hold cover a broad range of commonly studied kernels in
practice. In particular, Theorem~\ref{thm:main_thm} also holds for $\alpha$-exponential
kernels defined as $k(\xk,\xkd) = \exp(- \|\xk -\xkd\|_2^\alpha ) $
for $\alpha \in (0,2)$ even though we could not yet show that they satisfy Assumptions~\Akone-\Aktwo. In \suppmat~\ref{sec:proofmainthm}, we show how the
proof of Theorem~\ref{thm:main_thm} crucially relies on the rotational invariance
Assumption \Ckone~and the fact that the eigenvalues of the kernel
matrix $\kermat$ are asymptotically lower bounded by a positive
constant. Both conditions are also satisfied by the $\alpha$-exponential
kernels and the separate treatment is purely due to different
proof technique used to lower bound the kernel matrix eigenvalues.

\paragraph{Data distribution and scaling} We impose the following assumptions on the data distribution.

\begin{itemize}
\item[\Adonebracket] \textit{Covariance model}: We assume that the input data distribution is from one of the following sets
 \begin{equation}
 \begin{split}
    \probsetcov &= \{\prob_X \mid X = \inputcov^{\frac{1}{2}} \standardrv \text{ with } \standardrv_{(i)} \overset{\textrm{i.i.d.}}{\sim} \prob ~\forall i=1, ... ,  d, \prob \in \probsetw \} \\
  \probsetsphere &= \{\prob_X \mid X = \sqrt{\effdim} \frac{Z}{\|Z\|} \text{ with } Z \sim \prob \in \probsetcov\} \nonumber
 \end{split}
 \end{equation}
 where $\inputcov \in \R^{d\times d}$ is a positive semi-definite covariance matrix and the effective dimensions $\effdim$ is defined as $\effdim \define
  \trace(\inputcov)/\opnorm{\inputcov}$.
 The entries of the random vector $\standardrv$ are sampled i.i.d. from a distribution in the set $\probsetw$, containing the standard normal distribution and any zero mean and unit variance distributions with bounded support.

\item[\Adtwobracket] \textit{High dimensional regime}: We assume that
  the effective dimension grows with the sample size $\numobs$ s.t.
  $\effdim/ n^{\beta} \to c$ for some
  $\beta,c >0$.
\end{itemize}
In words, when $\prob_X \in \probsetcov$, the data has covariance
$\inputcov$, and when $\prob_X \in \probsetsphere$, the data can be generated by
projecting $Z\sim \prob_Z \in \probsetcov$ onto the sphere of radius
$\sqrt{\effdim}$. Unlike \citet{Ghorbani19,Ghorbani20}, we do not require the random vectors $x_i$ to be \textit{uniformly} distributed on the sphere. Our distributional assumptions include and generalize previous
works. A comparison can be found in Table~\ref{tab:settings}.
In the sequel, we assume without loss of generality that for simplicity $\opnorm{\inputcov}=1$ and hence $\effdim =
  \trace(\inputcov)$. 

In our analysis, the kernel function $\rotkerfunc$ does not change for any $d$. However as $\effdim, n\to\infty$ we
need to adjust the scaling of the input as the norm concentrates
around $\EE \| \x \|_2^2 = \effdim$.  
Hence, as $\numobs
\to\infty$, we consider the sequence of scale dependent kernels
\begin{equation}
  \label{eq:scalekernel}
  \sckerfunc{\bw}{x}{x'} = \rotkerfunc\left(\frac{\|\xk\|_2^2}{\bw},
  \frac{\|\xkd\|_2^2}{\bw}, \frac{ \xk^\top \xkd}{\bw}\right)
\end{equation}
 and parameterize the scaling by sequence of parameters $\bw$ dependent on $\numobs$. In Section~\ref{sec:tauconst} we study
the standard scaling $\frac{\bw}{\effdim}\to c>0$, 
before discussing $ \frac{\bw}{\effdim} \to 0$ and $
\frac{\bw}{\effdim}\to \infty$ respectively in
Section~\ref{sec:otherbw}, where we show that the polynomial approximation barrier is not a consequence of the standard scaling.

\begin{figure*}[t]
  \begin{center}
     \begin{subfigure}{0.32\textwidth}
       \includegraphics[width=\linewidth]{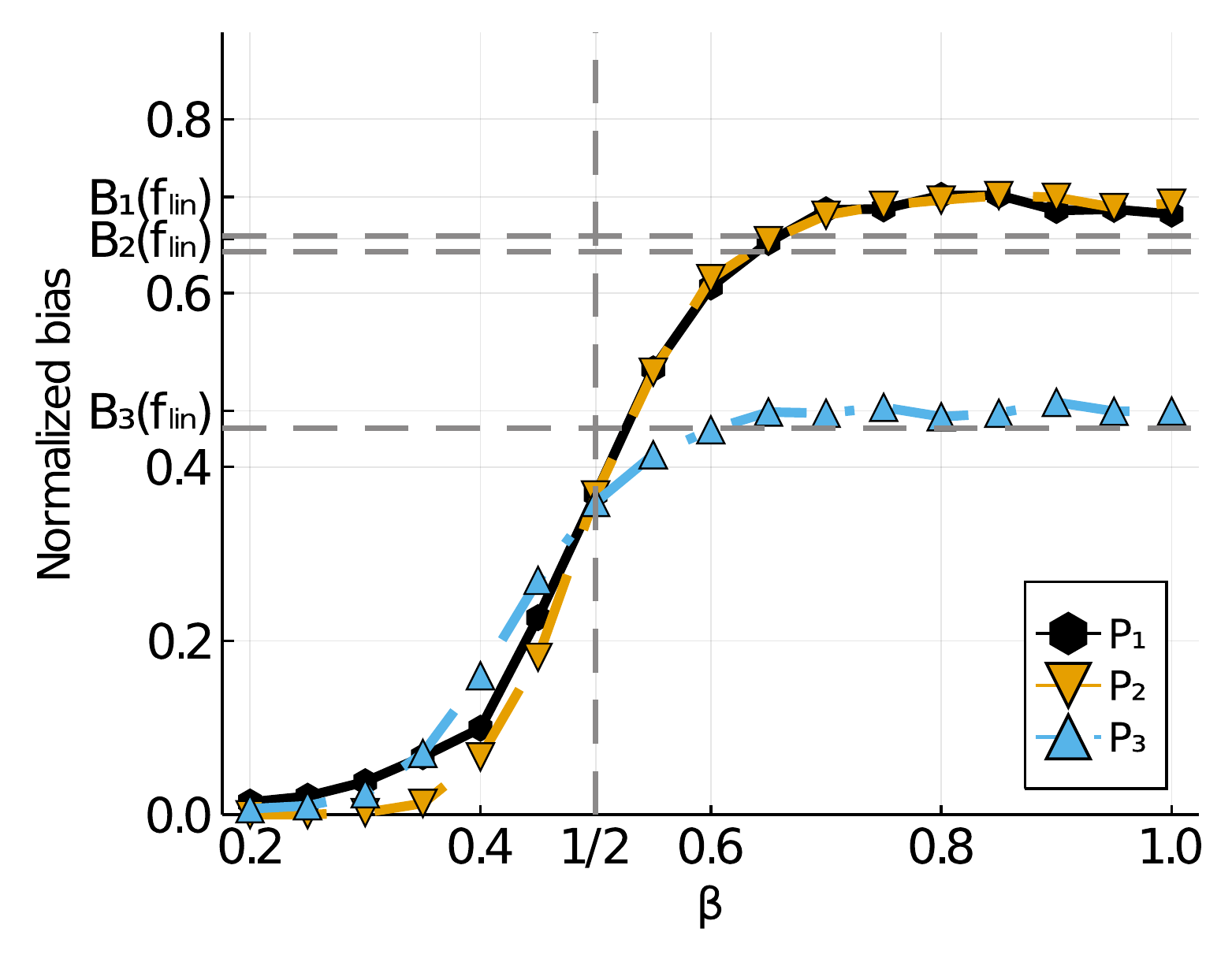}
       \caption{Ground truth $\ftrue = 2 \xel{1}^2$}
       \label{fig:fig2a}
     \end{subfigure}
     \begin{subfigure}{0.32\textwidth}
       \includegraphics[width=\linewidth]{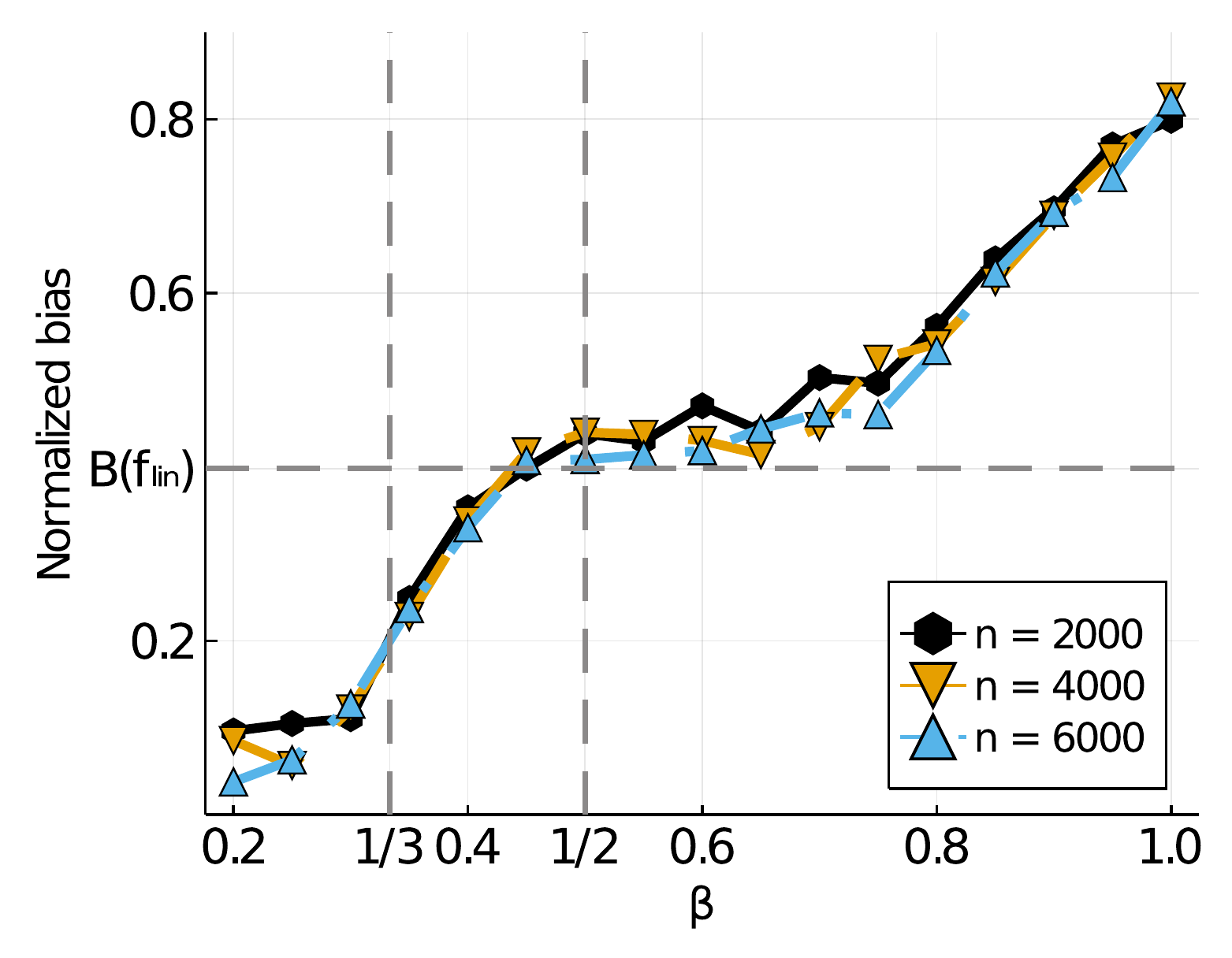}
       \caption{Ground truth $\ftrue = 2 \xel{1}^3$}
       \label{fig:fig2b}
     \end{subfigure}
          \begin{subfigure}{0.32\textwidth}
       \includegraphics[width=\linewidth]{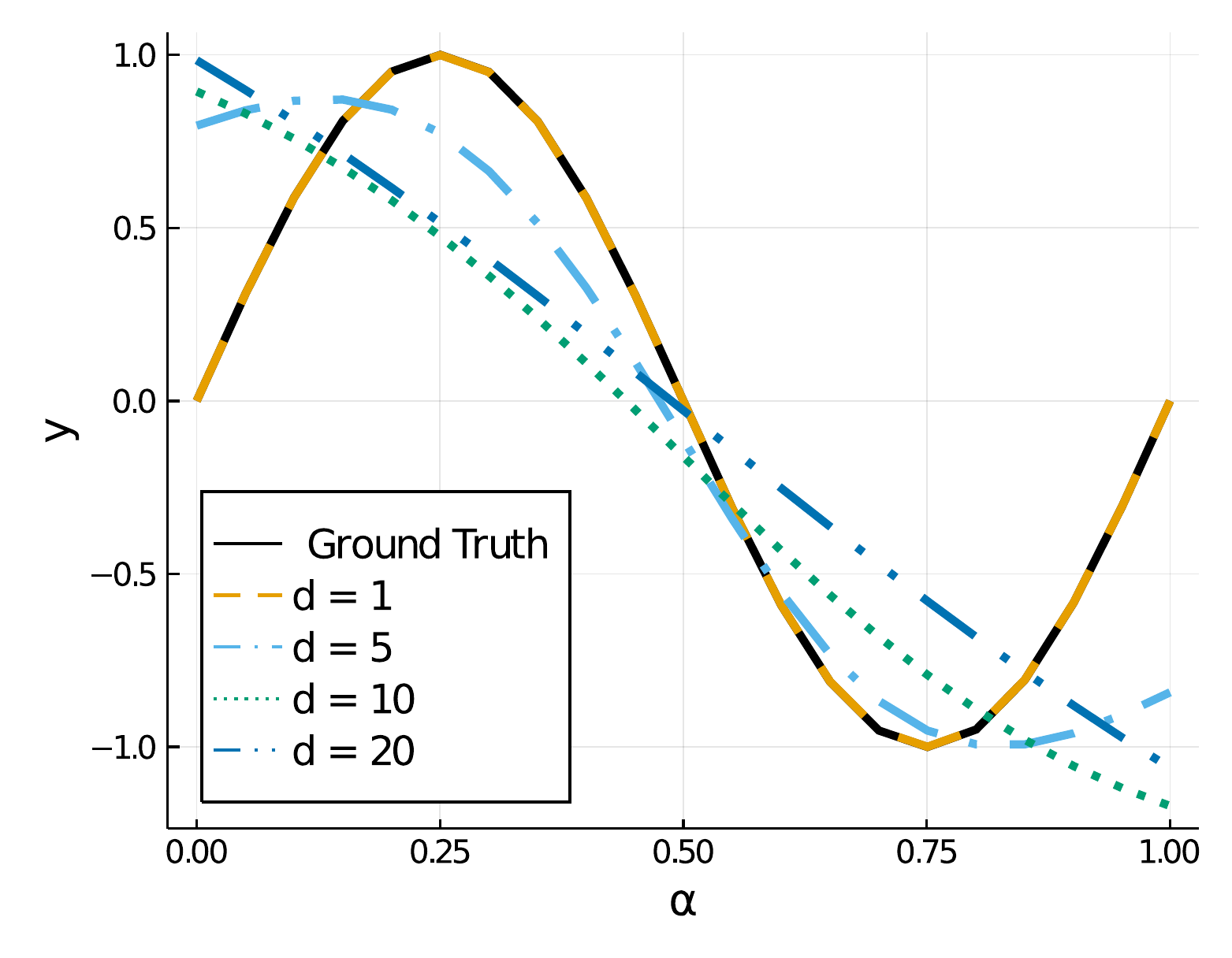}
       \caption{Degeneration of estimator}
       \label{fig:sine}
     \end{subfigure}
     \end{center}

\caption{\small{(a) and (b): The bias of the minimum norm interpolant
    $\Bias(\fhatinterpol)$ normalized by $\Bias(0)$ as a function of
    $\beta$ for (a) different
    covariance models $\prob_1$- $\prob_3$  (see Section \ref{sec:expsynthetic}) with
    $n=4000$ and (b) different choices of $n$ and samples generated from an isotropic Gaussian with $d = \floor{n^{\beta}}$ ($\prob_1$). The horizontal lines $\textbf{B}(f_{\textrm{lin}})$ correspond to the risk of the optimal
    linear model for the different input data distributions. (c): The minimum norm interpolator estimate $\fhatinterpol$ of  $\ftrue(\x) = \sin(2\pi
    \xel{1})$  plotted in the
    direction $(0,1/2,\cdots,1/2)+ \alpha \ev_1$ when fitting noiseless observations with covariates drawn uniformly from $[0,1]^d$ with $n=100$ and varying $d$.
}}

\label{fig:polymodels}
\end{figure*}

\section{Main Results}
\label{sec:3}

We now present our main results that hold for a wide range of
distributions and kernels and show that
kernel methods can at most consistently learn low-degree polynomials.
Section~\ref{sec:tauconst} considers the case $\bw \asymp \effdim$
while Section~\ref{sec:otherbw} provides lower bounds for
the regimes $\frac{\bw}{\effdim} \to 0$ and $\frac{\bw}{\effdim} \to\infty$.


\subsection{Inconsistency of kernel regression for $\bw \asymp \effdim$}
\label{sec:tauconst}

For simplicity, we present a result for the case $\bw=\effdim$ based on the assumptions \Akone-\Aktwo. The more general case $\bw\asymp\effdim$ follows from the exact same arguments.
In the sequel we denote by $\polyspace{m}$ the space of polynomials of
degree at most $m \in \N$.



\begin{theorem}[Polynomial approximation barrier]
  \label{thm:main_thm}
  Assume that the kernel $\kernf$ respectively its restriction onto the sphere satisfies \Akone-\Aktwo~or is an $\alpha$-exponential kernel.
  Furthermore assume that the input distribution $\prob_X$ satisfies Assumptions \Adone-\Adtwo~and that the ground truth function $f^*$ is bounded.
  Then, for some $m \in \NN$ specified below, the following results hold for both the ridge~\eqref{eq:krr} and ridgeless estimator~\eqref{eq:constraint_min_p} $\fhatridge$ with $\lambda \geq 0$.
  \begin{enumerate}

  \item The bias of the kernel estimators $\fhatridge$ is
    asymptotically almost surely lower bounded for any $\eps > 0$,
  \begin{equation}
    \label{eq:biaslb}
    \Bias(\fhatridge) \geq \underset{\polyf \in
      \polyspace{m}}{\inf} \| \ftrue -
    \polyf \|_{\Ell_2(\prob_X)} - \eps ~~~a.s. ~\mathrm{ as }~ n \to \infty.
  \end{equation}
\item For bounded kernel functions on the support of $\prob_X$ the averaged estimator $\EEobs \fhatridge$ converges almost surely in $\Ell_2(\prob_X)$ to a polynomial $\polyf  \in \polyspace{m}$,
  \begin{equation}
  \label{eq:polyapproxlp}
    \left\| \EEobs \fhatridge - \polyf \right\|_{\Ell_2(\prob_X)} \to 0
    ~~ a.s. ~\mathrm{ as }~ n \to \infty.
  \end{equation}
  \end{enumerate}
  More precisely, if $g_i$ is $(\floor{2/\beta}+1-i)$-times
  continuously differentiable in a neighborhood of $(1,1)$ and there
  exists $\jthresh > \floor{2/\beta}$ such that
  $\rotkerfunc_{\jthresh}(1,1) >0$, then the
  bounds \eqref{eq:biaslb},\eqref{eq:polyapproxlp} hold with $m = 2
  \floor{2/\beta}$ for $\prob_X
  \in \probsetcov$ and $m = \floor{2/\beta}$ for $\prob_X
  \in \probsetsphere$.

\end{theorem}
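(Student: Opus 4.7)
The proof plan is to show that in the high-dimensional limit the empirical kernel matrix $\kermat$ decomposes, up to a vanishing operator-norm residual, as a positive multiple of the identity plus a low-rank polynomial-feature contribution. This pins the conditional mean of the ridge / ridgeless estimator to the polynomial subspace $\polyspace{m}$, which gives conclusion \eqref{eq:polyapproxlp} directly; the lower bound \eqref{eq:biaslb} then follows from the triangle inequality together with the definition of the best polynomial approximation. So the real content is the convergence of $\EEobs \fhatridge$ to an element of $\polyspace{m}$.

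First I would collect concentration estimates under \Adone-\Adtwo~at the scale $\bw=\effdim$: uniformly over the sample, $|\|\xind{i}\|_2^2/\bw-1|$ and $|\xind{i}{}^{\top}\xind{j}/\bw|$ for $i\neq j$ are of order $n^{-\beta/2}$ up to polylogarithmic factors, and the first quantity vanishes identically on $\probsetsphere$. On the corresponding high-probability event all kernel arguments lie in the regularity neighborhood of \Akone, so substituting the series \eqref{eq:kerneldefg} and Taylor-expanding each $\rotkerfunccoeffj{j}$ about $(1,1)$ up to order $\floor{2/\beta}-j$ writes each entry of $\kermat$ as a finite combination of monomials in $\|\xind{i}\|_2^2/\bw-1$, $\|\xind{j}\|_2^2/\bw-1$, and $\xind{i}{}^{\top}\xind{j}/\bw$, plus a remainder. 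Because $\|\x\|_2^2/\bw-1$ has degree $2$ in $\x$ while $\x^{\top}\xd/\bw$ has degree $1$ in each argument, the retained part is a polynomial kernel of total degree at most $m=2\floor{2/\beta}$ for $\prob_X\in\probsetcov$ and $m=\floor{2/\beta}$ for $\prob_X\in\probsetsphere$. The cutoff $\floor{2/\beta}$ is the smallest power for which the aggregated off-diagonal residual (over the $n^2$ pairs) has vanishing operator norm, and the assumption $\rotkerfunc_{\jthresh}(1,1)>0$ extracts from the diagonal of that residual a strictly positive contribution $c_\star I$ with $c_\star>0$.

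Writing the result as $\kermat=c_\star I+\Phi\Phi^{\top}+E$ with $\Phi\in\R^{n\times D_m}$ the polynomial feature matrix of degree $m$ and $\|E\|_{\mathrm{op}}\to 0$ almost surely, I would apply Woodbury to $(\kermat+\lambda I)^{-1}$: the $c_\star I$ summand acts as a Tikhonov penalty of order $n$ on the polynomial ridge regression in $\Phi$, so the averaged predictor $\EEobs\fhatridge(\x)=\kernf_{\bw}(\x,\datamatrix)(\kermat+\lambda I)^{-1}\ftrue(\datamatrix)$ agrees up to $o(1)$ in $\Ell_2(\prob_X)$ with a ridge-regularized least-squares fit inside $\polyspace{m}$, which is itself an element of $\polyspace{m}$. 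The principal obstacle is the spectral bound $\|E\|_{\mathrm{op}}\to 0$; I would combine the restricted Lipschitz hypothesis \Aktwo~with a Hanson--Wright / matrix-concentration argument on the moment matrices of the residual monomials, which is also what provides the almost-sure lower bound $\lambda_{\min}(\kermat)\ge c_\star/2$ that the proof actually uses. For $\alpha$-exponential kernels the local series \eqref{eq:kerneldefg} is unavailable, so I would instead exploit rotational invariance and establish the positive lower bound on $\lambda_{\min}(\kermat)$ directly by a different technique; this surrogate for $c_\star$ is the only reason those kernels require a separate treatment.
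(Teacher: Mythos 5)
Your skeleton matches the paper's: concentrate $\|\x_i\|_2^2/\bw-1$ and $\x_i^\top\x_j/\bw$ (Lemma~\ref{lm:innerpr}), truncate the series of \Akone~at order $\floor{2/\beta}$ with a Taylor expansion of the $\rotkerfunccoeffj{j}$ around $(1,1)$, read off the degree count $2\floor{2/\beta}$ versus $\floor{2/\beta}$ on $\probsetsphere$, and reduce everything to a positive lower bound on $\lambda_{\min}(\kermat+\lambda\idmat_n)$, with the $\alpha$-exponential case handled separately only for that bound. The genuine gap is in how you close that key step. You write $\kermat=c_\star I+\Phi\Phi^\top+E$ with $\Phi$ a degree-$m$ polynomial feature matrix and claim that $\|E\|_{\mathrm{op}}\to 0$ (via Hanson--Wright-type concentration) already gives $\lambda_{\min}(\kermat)\geq c_\star/2$. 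Two problems: (i) the retained low-degree part has entries $(\z_i^\top\z_j)^q$ times the \emph{Taylor truncations} of $g_q$, and truncated Taylor polynomials of a PSD kernel need not be PSD (e.g.\ the order-one truncation of $e^{-u}e^{-v}$ about $(1,1)$ is already indefinite), so the Gram form $\Phi\Phi^\top$ is unjustified as written; (ii) without positive semi-definiteness of that retained block, the diagonal contribution $c_\star I$ coming from the tail terms $j>\floor{2/\beta}$ does not control $\lambda_{\min}$ at all, since the retained block could have eigenvalues below $-c_\star$. This positivity is exactly where the paper uses the part of \Akone~you never invoke, namely that each $g_q$ is itself a PSD kernel: in Lemma~\ref{lm:elkarouiextension} the comparison matrix keeps the \emph{exact} $g_q(\|\z_i\|_2^2,\|\z_j\|_2^2)$, and Proposition~\ref{prop:EVlowerbound} then applies Schur's product theorem to get $\sum_q(\Z^\top\Z)^{\circ q}\circ\kermatg_{g_q}\succeq 0$, with the strictly positive constant $g(1,1,1)-\sum_{q\leq m}g_q(1,1)>0$ supplied by $\rotkerfunc_{\jthresh}(1,1)>0$, $\jthresh>\floor{2/\beta}$. (No matrix-concentration machinery is needed; an entrywise bound times $n$ suffices for $E$.) Your plan, as stated, does not deliver the eigenvalue bound you yourself identify as the principal obstacle.

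Two further points. You propose to deduce \eqref{eq:biaslb} from \eqref{eq:polyapproxlp} by the triangle inequality, but \eqref{eq:polyapproxlp} carries the extra hypothesis that the kernel is bounded on the support of $\prob_X$, while statement~1 does not; the paper instead proves the bias bound directly by restricting to the concentration event, using the uniform closeness of the predictor to $\polyf$ there, and then removing the indicator via boundedness of $\ftrue$ and a tail bound on $\polyf$ (Lemma~\ref{lm:lmmainthm}), so part~1 does not inherit the bounded-kernel assumption. Relatedly, the $\Ell_2(\prob_X)$ statement itself requires controlling the test point \emph{outside} the concentration region: uniform approximation on the high-probability set is not enough, and the paper does real work bounding $\EE_{\Zrv}[\indicator_{\Zrv\notin\UsetZ}\polyf(\Zrv)^2]$ through sub-exponential tails of $\|\Zrv\|_2^2$ and bounding the predictor there by $n^2\,\prob(\UsetZ^{\comp})\to 0$, which is precisely where the bounded-kernel and bounded-$\ftrue$ hypotheses are consumed. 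Your proposal is silent on this region, so as written it yields convergence only on the concentration event, not in $\Ell_2(\prob_X)$. Both issues are repairable with the paper's arguments, but they are not mere bookkeeping.
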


Corollary~\ref{cor:kernels} gives examples for kernels that satisfy the
assumptions of the theorem.
The almost sure statements refers to the sequence of matricies $\Xs$ of random vectors $\x_i$ as $n \to \infty$, but also hold true with probability $\geq 1- n^2\exp(-\Cgenprob{\epsd}
   \log(n)^{(1+\epsilon')})$ over the draws of $\X$ (see
Lemma ~\ref{lm:innerpr} for further details). 

The first statement in Theorem \ref{thm:main_thm} shows that even with
noiseless observations, the estimator
$\fhatridge$ can at most consistently learn ground truth functions $\ftrue$
that are a polynomial of degree less than $m$. 
We refer to $m$ as the $\beta$-dependent
\emph{polynomial approximation barrier}. Figure \ref{fig:fig2a} and \ref{fig:fig2b} illustrate this barrier on synthetic datasets drawn from different input data distributions. 
While the first statement only implies that the
risk does not vanish if $\fstar$ is not a polynomial,
the second part of Theorem~\ref{thm:main_thm} explicitly states that the
averaged estimator $\EEobs \fhatridge$ converges in
$\Ell_2(\prob_X)$ to a polynomial of degree at most $m$ when the
kernel is bounded on the support of
$\prob_X$.\footnote{In~\suppmat~\ref{sec:prop1_proof} we provide a
  closed form expression for the polynomial in the proof of the
  Theorem \ref{thm:main_thm}. Furthermore, it is there straight forward to verify that the second statement in Theorem \ref{thm:main_thm} also applies for the estimate $\fhatridge$ instead of the averaged estimate $\EEobs \fhatridge$ given that the observations $y_i$ are bounded.}  We refer
to Theorem \ref{prop:main_thm_ext} in ~\suppmat~\ref{sec:proofmainthm} for slightly weaker statements
which also apply to unbounded kernels. Figure \ref{fig:sine}
illustrates how the estimator degenerates to a linear function as dimension grows.

The attentive reader might notice that \citet{Ghorbani20} achieve a lower
barrier $m=\floor{1/\beta}$ for their specific setting which
implies that our results are not tight. However, this is
not the main focus in this work as we primarily intend
to demonstrate that the polynomial approximation barrier persists for general covariance model data distributions (Ass. \Bdone-\Bdtwo) and hence asymptotic consistency in high-dimensional regimes is at most reachable for commonly used rotationally invariant kernels if the ground truth function is a low degree-polynomial.  We leave tighter bounds as interesting future work. Finally, we remark that as $\beta \to 0$ we enter again a classical asymptotic regime where $n$ is much larger compared to $d$ and hence $m \to \infty$. This underlines the difference between classical and high-dimensional asymptotics and shows that our results are only meaningful in the latter. 


We now present a short proof sketch to provide intuition for why
the polynomial approximation barrier holds. The full proof can be found in~\suppmat~\ref{sec:proofmainthm}.
  

\paragraph{Proof sketch}
The proof of the main theorem is primarily based
 on the concentration of Lipschitz continuous functions of vectors with i.i.d~entries. In particular, we show in Lemma \ref{lm:innerpr} that
\begin{equation}
\label{eq:concentration}
    \underset{i}{\max}\frac{\left| \xind{i}^\top \Xrv  \right|}{\bw} \leq n^{-\beta/2}(\log n)^{(1+\epsilon)/2}  ~~~a.s. ~\mathrm{ as }~ n \to \infty,
\end{equation}
where we use $\trace(\inputcov) \asymp \numobs^\beta$. Furthermore, 
Assumption \Akone and hence the rotational invariance of the kernel function $\kernf$, implies that for inner product kernels where $g_j$ are constants,
\begin{equation}
    \kernf_{\bw}(\xind{i},\Xrv) = \sum_{j=0}^{m} g_j ~
    \left(\frac{\xind{i}^\top\Xrv}{\bw}\right)^j 
    +
    O\left(n^{-\theta} \right) ~~ a.s. ~\mathrm{ as }~ n \to \infty
 \end{equation}
with $\theta$ some constant such that $1<\theta <(m+1)\frac{\beta}{2}$
that exists because $m\geq \floor{2/\beta}$. Hence, as $n\to\infty$,
$ \kernf_{\bw}(\x_i, X)$ converge to low-degree polynomials.
Using the closed form solution of $\fhatridge$ based on the
representer theorem we can hence conclude the first statement in Theorem
\ref{thm:main_thm} if $\kermat +\lambda I \succ cI$ for some
constant $c>0$.  The result follows naturally for ridge regression with non vanishing $\lambda>0$. However for the
minimum norm interpolator, we need to show that the eigenvalues
of the kernel matrix $\kermat$ themselves are asymptotically lower bounded
by a positive non-zero constant. This follows from the additional assumption in Theorem \ref{thm:main_thm} and the observation that $(\X^\top \X)^{\circ \jthresh} \to \idmat_n$ in operator norm with $\circ$ being the Hadamard product. Finally, the case where $g_j$ depend on $\|x_i\|_2^2$ and $\|x_j\|_2^2$ requires a more careful analysis and constitute the major bulk of the proof in ~\suppmat~\ref{sec:proofmainthm}. \qed
\\

The assumptions in Theorem \ref{thm:main_thm} cover a broad range of
commonly used kernels, including the ones in previous works. The
following corollary summarizes some relevant special cases
\begin{cor}
\label{cor:kernels}
Theorem \ref{thm:main_thm} applies to
\begin{enumerate}
 \item The exponential inner product kernel
 \item The $\alpha$-exponential kernel for $\alpha \in (0,2]$, including Laplace ($\alpha =1$) and the Gaussian ($\alpha =2$) kernels
 \item The fully-connected NTK of any depth with regular activation functions including the ReLU activation $\sigma(x) = \max(0,x)$
\end{enumerate}
\end{cor}

The precise regularity conditions of the activation functions for the
NTK and the proof of the corollary can be found
in~\suppmat~\ref{sec:proofcrkernels}. 

\subsection{Inconsistency of kernel interpolation for $\bw \not\asymp \effdim$}
\label{sec:otherbw}

As Section~\ref{sec:tauconst} establishes the polynomial approximation
barrier for the classical scaling $\frac{\bw}{\effdim} \asymp 1$, an
important question remains unaddressed: can we avoid it with a different
scaling?  When $\frac{\bw}{\effdim} \to 0$, intuitively the estimates
converge to the zero function almost everywhere and hence the bias is
lower bounded by the $\Ell_2(\prob_x)$-norm of $\ftrue$. This implies that
no function can be learned consistently with this scaling (see~\suppmat~
\ref{sec:simplertaucase} for a rigorous statement).  When $\bw$
increases faster than $\effdim$, however, the behavior is unclear a
priori. Simulations in Figure~\ref{fig:taulaplace}, \ref{fig:tauexpo}
suggest that the bias could in fact decrease for $\bw \gg \effdim$ and
attain its minimum at the so called \emph{flat limit}, when $\bw \to
\infty$.  To the best of our knowledge, the next theorem is the first
to show that the polynomial approximation barrier persists in the flat
limit for RBF kernels whose eigenvalues do not decay too fast.

\begin{figure*}[t]
\begin{subfigure}{0.32\linewidth}
    \centering
    \includegraphics[width=\textwidth]{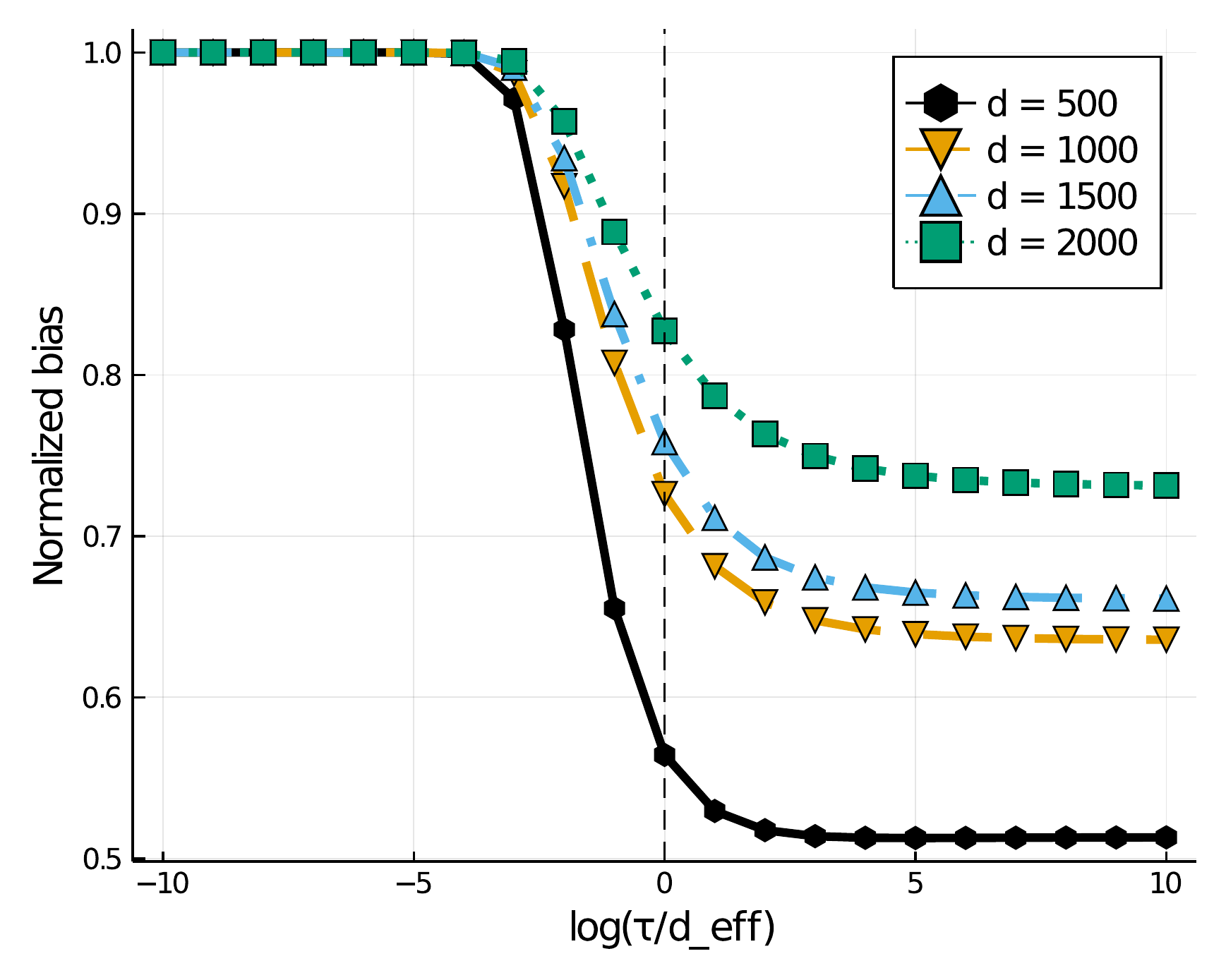}
    \caption{Laplacian kernel}
    \label{fig:taulaplace}
  \end{subfigure}
  \hfill
  \begin{subfigure}{0.32\linewidth}
    \centering
    \includegraphics[width=\textwidth]{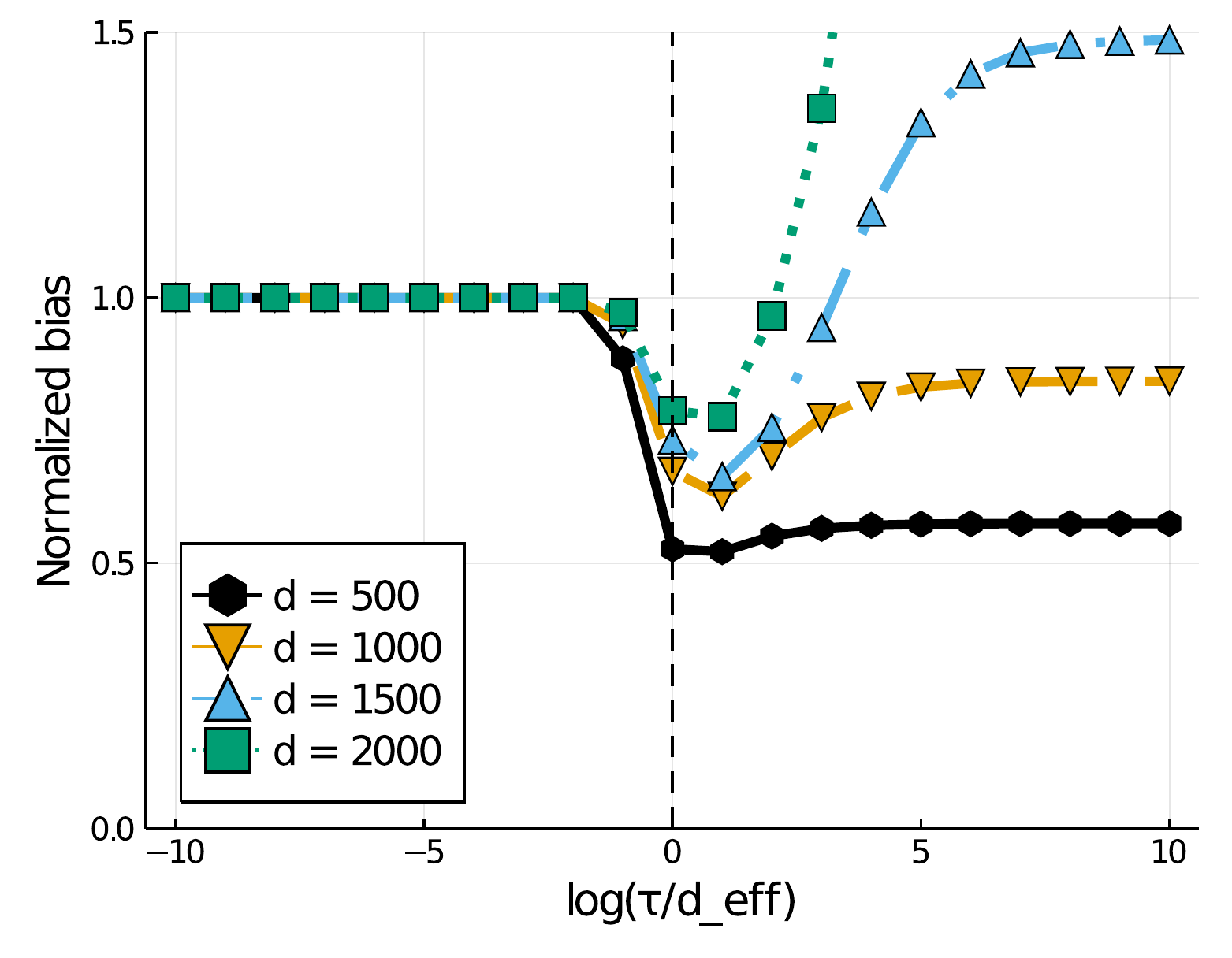}
    \caption{Gaussian kernel}
    \label{fig:taugauss}
  \end{subfigure}
  \hfill
    \begin{subfigure}{0.32\linewidth}
    \centering
    \includegraphics[width=\textwidth]{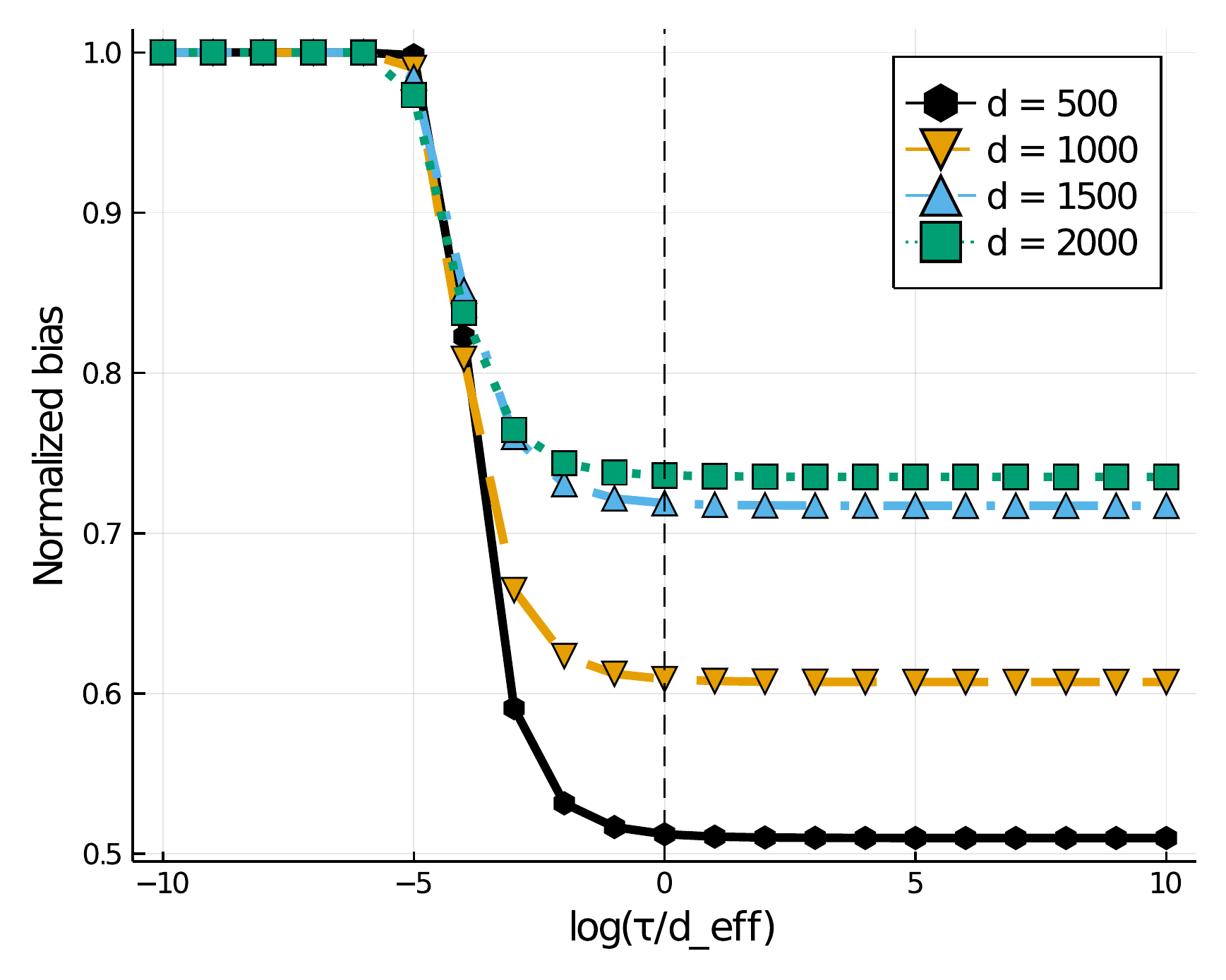}
    \caption{Exponential inner product kernel}
    \label{fig:tauexpo}
  \end{subfigure}  
     
\caption{\small{The bias of the minimum interpolant
    $\Bias(\fhatinterpol)$ normalized by $\Bias(0)$ as a function of the normalization constant
    $\bw$ for different choices of $d = \effdim$.
  The ground truth function is $f(x) = 2\xel{1}^3$ and $n= 2000$ noiseless
  observations are fit where the input vectors are sampled from an isotropic Gaussian with $d = \floor{n^{1/2}}$.}}
\label{fig:taudependence}
\end{figure*}


\begin{theorem}
  \label{thm:otherchoicesoftau}
   Let $\kernf$ be an RBF kernel with Fourier transform
 $\ftkernf$ such that for any $d$, ${\underset{\|\theta\| \to
   \infty}{\lim} \|\theta\|^{d+\alpha} \ftkernf(\theta) = \cphi > 0}$
 for some $\alpha \in(0,2)$.
Under the assumptions \Adone-\Adtwo~on the data distribution, the bias lower bound~\eqref{eq:biaslb} and polynomial approximation~\eqref{eq:polyapproxlp} hold for the flat limit interpolator $\underset{\bw \to \infty}{\lim}\fhatinterpol$ with the same $\beta$-dependence for $m$ as in Theorem~\ref{thm:main_thm}, given that $\ftrue$ is bounded on the support of $\prob_X$.
%

%
%
\end{theorem}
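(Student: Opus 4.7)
The plan is to reduce the flat-limit problem to a fixed-kernel interpolation problem and then redo the asymptotic analysis of Theorem~\ref{thm:main_thm}. The Fourier-decay assumption $\|\theta\|^{d+\alpha}\ftkernf(\theta)\to\cphi>0$ is precisely the classical condition (Driscoll--Fornberg, Schaback, Lee--Yoon--Yoon) under which, on any fixed data set, the flat limit of RBF interpolation coincides with interpolation by the polyharmonic-type kernel $\tilde\kernf(\xk,\xkd)=-\|\xk-\xkd\|^\alpha$, which is conditionally positive definite of order one for $\alpha\in(0,2)$. On the Fourier side the reason is that $\ftkernf_{\bw}(\theta)=\bw^{d/2}\ftkernf(\sqrt{\bw}\theta)\sim\bw^{-\alpha/2}\cphi\|\theta\|^{-d-\alpha}$, so the rescaled native-space seminorm $\bw^{\alpha/2}\|f\|_{\Hilbertspace}^2$ converges to a constant multiple of the polyharmonic seminorm $\int|\hat f(\theta)|^2\|\theta\|^{d+\alpha}d\theta$, and minimizing it subject to the interpolation constraints yields the CPD interpolant
\[
\hat g(x)=\sum_{i=1}^n c_i\,\tilde\kernf(x,\xind{i})+a_0,
\]
with coefficients determined by the interpolation equations together with the moment condition $\sum_i c_i=0$. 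The first step is to make this convergence rigorous uniformly over the random design and to confirm $\underset{\bw\to\infty}{\lim}\fhatinterpol=\hat g$.

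Next I verify that $\tilde\kernf$ admits a local power-series expansion of the same form as in Assumption~\Akone\ on the concentration region of the data. Writing $\tilde\kernf(\xk,\xkd)=-(\|\xk\|_2^2+\|\xkd\|_2^2-2\xk^\top\xkd)^{\alpha/2}$ and using that $\|\xk\|_2^2+\|\xkd\|_2^2\approx 2\effdim$ is bounded away from zero on the support of $\prob_X$, the binomial series
\[
\tilde\kernf(\xk,\xkd)=-(\|\xk\|_2^2+\|\xkd\|_2^2)^{\alpha/2}\sum_{j=0}^{\infty}\binom{\alpha/2}{j}\left(\frac{-2\xk^\top\xkd}{\|\xk\|_2^2+\|\xkd\|_2^2}\right)^{j}
\]
converges, and since $\alpha\notin 2\N$ each coefficient $g_j(\effdim,\effdim)$ is nonzero, providing the analogue of the threshold condition $\rotkerfunccoeffj{\jthresh}(1,1)>0$ for every $j$. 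Combined with the concentration bound~\eqref{eq:concentration}, this yields $\tilde\kernf(\xind{i},X)=\tilde\kernf_{\mathrm{poly}}(\xind{i},X)+o(1)$ almost surely, where $\tilde\kernf_{\mathrm{poly}}$ is a polynomial of degree at most $m$ in $\xind{i}^{\top}X$, with $m$ as specified in Theorem~\ref{thm:main_thm}.

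The remaining steps mirror the proof of Theorem~\ref{thm:main_thm}. The eigenvalue lower bound for $\tilde K$ restricted to $\mathbf{1}^{\perp}$ follows by the same Hadamard-power argument: after subtracting the rank-one leading term $-(2\effdim)^{\alpha/2}\mathbf{1}\mathbf{1}^{\top}$, the next-order contribution is proportional to $\Xs^{\top}\Xs$, and higher-order Hadamard powers of $\Xs^{\top}\Xs$ converge to the identity in operator norm, so $\tilde K\vert_{\mathbf{1}^{\perp}}$ is uniformly invertible. Combined with the column-wise polynomial expansion, this forces $\EEobs\hat g$ to converge in $\Ell_2(\prob_X)$ to a polynomial of degree at most $m$, yielding both~\eqref{eq:polyapproxlp} and the bias lower bound~\eqref{eq:biaslb}. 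The hardest step will be the first: the classical flat-limit correspondence is a pointwise statement for fixed data, and I need to upgrade it to hold in the high-dimensional asymptotic regime uniformly over a random design with $n,d\to\infty$, controlling the CPD coefficients $c_i$ quantitatively. This requires dimension-tracking Fourier estimates, for which the assumed polynomial tail $\|\theta\|^{d+\alpha}\ftkernf(\theta)\to\cphi$ is essential, and constitutes the bulk of the technical work.
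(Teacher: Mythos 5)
Your overall route is the paper's own: identify the flat limit with the order-one CPD (polyharmonic-type) interpolant built from $-\|x-x'\|^{\alpha}$ plus a constant, expand the distance kernel by the binomial series around the concentration point, and combine Lemma~\ref{lm:innerpr} with an eigenvalue bound for the distance matrix. The genuine gap sits exactly where the paper spends most of its effort and where you write ``this forces $\EEobs \hat g$ to converge.'' Writing $\Dalpha$ for the matrix with entries $\|\z_i-\z_j\|_2^{\alpha}$, the flat-limit estimator comes from the bordered system and has the closed form $\y^\top A\,\dvec + \frac{\y^\top \Dalpha^{-1}1}{1^\top \Dalpha^{-1}1}$ with $A=-\Dalpha^{-1}+\frac{\Dalpha^{-1}11^\top\Dalpha^{-1}}{1^\top\Dalpha^{-1}1}$. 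To rerun the argument of Theorem~\ref{thm:main_thm} (replace each entry of $\dvec$ by its degree-$m$ polynomial approximant and show $n\cdot\max_i|\mathrm{error}_i|\to 0$) you must show $\opnorm{A}=O(1)$ and control the constant coefficient, i.e.\ boundedness of $\frac{1^\top\Dalpha^{-2}1}{1^\top\Dalpha^{-1}1}$. Uniform invertibility of $\tilde K$ restricted to $\mathbf{1}^{\perp}$ does \emph{not} give this: $1^\top\Dalpha^{-1}1$ is positive but could a priori tend to zero. The paper needs a dedicated argument here — $1^\top\Dalpha^{-2}1=O(1)$ via the spectral structure of $\Dalpha$ ($n-1$ negative eigenvalues bounded away from zero and a single positive eigenvalue of order $n$), followed by a contradiction argument ruling out that $1^\top\Dalpha^{-1}1$ vanishes faster than $\|\Dalpha^{-1}1\|_2^2$. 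Your proposal does not address this step at all, and without it the polynomial-approximation and bias bounds do not follow.

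You also misallocate the difficulty. The theorem concerns $\lim_{\bw\to\infty}\fhatinterpol$ with the limit in $\bw$ taken at fixed $n,d$, \emph{before} $n\to\infty$; hence the classical fixed-data flat-limit identification (Lee et al., Thm.~3.12) is applied verbatim, its only requirement being nonsingularity of the bordered matrix, which follows from Lemma~\ref{lm:alphaexpbound} together with an inertia argument. No uniform-in-$(n,d)$ flat-limit convergence or ``dimension-tracking Fourier estimates'' are needed, so the step you flag as the bulk of the work is essentially free. Conversely, your sketch of the eigenvalue bound (``higher Hadamard powers of $\Xs^\top\Xs$ converge to the identity, so $\tilde K\vert_{\mathbf{1}^{\perp}}$ is uniformly invertible'') does not transfer verbatim from the PSD setting of Proposition~\ref{prop:EVlowerbound} to the CPD distance kernel: the paper instead proves $\mu^\top\Dalpha\mu\le-\tilde c\,\|\mu\|_2^2$ for all $\mu\perp 1$ via the representation $t^{\alpha}=c_{\alpha}\int_0^{\infty}(1-e^{-t^2x^2})x^{-1-\alpha}dx$, which reduces the claim to the Gaussian-kernel eigenvalue bound where the Hadamard argument actually lives. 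A direct expansion of $-\Dalpha$ could be repaired (the uniformly positive diagonal contribution comes from the truncation deficit of the binomial series at $\|\z_i-\z_j\|=0$, not from Hadamard powers tending to the identity), but as stated it is not a proof.
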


In particular, the assumptions hold for instance for the
$\alpha$-exponential kernels with $\alpha \in (0,2)$ (see
\cite{Blumenthal60}) and the popular Matern RBF kernels with $\nu <2$.
The proof  of the theorem can be found in~\suppmat~\ref{sec:flatlimit} and is based on the flat limit literature on RBFs
\cite{Lee2014,Driscoll02,Schaback05,Larsson05}.
Finally, we remark that Theorem \ref{thm:otherchoicesoftau} applies only for the
interpolating estimator $\fhatinterpol$. However, because the ridge penalty is well known to regulate the bias-variance tradeoff, we argue that the bias increases with the ridge penalty and hence attains
its minimum at $\lambda = 0$.

\subsection{Discussion of theoretical results}
\label{sec:contributions}

As a consequence of our unifying treatment, we cover most of the
settings studied in the existing literature and show
that the polynomial approximation barrier~\eqref{eq:biaslb} is neither
restricted to a few specific distributions nor to a particular choice
of the scaling or the eigenvalue decay of the kernel function.
We summarize our setting
alongside those of previous works in Table~\ref{tab:settings}.
\paragraph{Distribution}
The Assumptions \Bdone-\Bdtwo~ allow very general distributions and include the ones in the current literature. In particular, we also cover the settings studied in the papers \cite{Liang20b,Liang20a,Liu20} and hence put their optimistic conclusions into perspective. 
Besides, our results hold true for \emph{arbitrary} covariance matrices and only depend on the growth rate of the effective dimension, but are independent of the explicit structure of the covariance matrix. This stands in contrast to linear regression
where consistency can (only) be guaranteed for spiky
covariance matrices \cite{Bartlett20,Muthukumar20}. 

\paragraph{Scaling} Our results do not only apply for the standard choice of the scaling ${\bw/\effdim \to c>0}$, but also apply to general RBF kernels in the flat limit scaling, i.e. where $\bw \to \infty$. This case is particularly important since this is where we empirically find the bias to attain its minimum in Figure \ref{fig:tauexpo}.
We therefore conjecture that the polynomial approximation barrier cannot be overcome with different choices of the scaling.


\begin{table*}[t]
  {\small
    \begin{center}
\begin{tabular}{c|c|c|c|c|c|c }
  Functions $f^\star$ & Kernels & Domain & Choice of $\bw$ &
  $\inputcov$ & Regime & Paper \\ \hline & & & & & &
  \\ $ \polyspace{m}$& IP, $\alpha$-exp, NTK &
  $\RR^d$, $\Sphere{d}(\sqrt{\effdim})$ & $\bw =
      \effdim $ & arbitrary  &
  $\effdim\asymp n^{\beta}$ & Ours \\
  $\polyspace{m}$& RBF &$\RR^d$,
  $\Sphere{d}(\sqrt{\effdim})$ & $\bw \to \infty$ & arbitrary
   & $\effdim \asymp n^{\beta}$ & Ours
  \\ 
  $\|\ftrue\|_{\Hilbertspace} = O(1)$& IP, NTK & $\RR^d$ & $\bw =
  \effdim=d$ & $\idmat_d$ & $d \asymp n^{\beta}$ &
  ~\cite{Liang20b} \\
  $\|\ftrue\|_{\Hilbertspace} = O(1)$\footnotemark &IP,
  RBF& $\RR^d$ & $\bw = \effdim=d$ & $\tr(\inputcov)/d \to c$ or $\to 0$
  & $d \asymp n$ &
      \cite{Liang20a},\cite{Liu20}
  \\
  $\polyspace{m'}$ & IP, NTK & $\Sphere{d}(\sqrt{\effdim})$ & $\bw = d $
  & $ \idmat_d$ & $d \asymp n^{\beta}$ &
  ~\cite{Ghorbani19}
  \\
  $\polyspace{m'}$ & IP, NTK &
  $\Sphere{d}(\sqrt{\effdim})$ & $\bw \approx
      \effdim $ & $\strut
      UU^\top + d^{-\kappa}\idmat $ & $\effdim
  \asymp n^{\beta}$ & ~\cite{Ghorbani20} \\
\end{tabular} 
  \end{center}}
 \caption{\small{
   Compilation of the different settings studied in the literature and our paper.
   The left-most column denotes the necessary conditions on the
   function space of the ground truth $\ftrue$ for the corresponding
   consistency results. 
   Here, $m = 2\floor{2/\beta}$ and $m' = \floor{1/\beta}$. 
 }}
 \label{tab:settings}
\end{table*}

\footnotetext{ \cite{Liu20} actually requires the weaker assumption that the source condition parameter $r>0$.}

\paragraph{Eigenvalue decay}
Furthermore, by explicitly showing that the polynomial barrier
persists for all $\alpha$-exponential kernels with $\alpha \in (0,2]$
  (that have vastly different eigenvalue decay rates), we provide a
  counterpoint to previous work that suggests consistency for $\alpha \to 0$.
  In particular, the paper \cite{Belkin19} proves minimax optimal rates for the
  Nadaraya-Watson estimators with singular kernels for fixed
  dimensions and empirical work by \cite{Belkin18c,Wyner17} suggests that
  spikier kernels have more favorable performances.  Our results
  suggest that in high dimensions, the effect of
  eigenvalue decay (and hence ``spikiness'') may be dominated by
  asymptotic effects of rotationally invariant kernels.
  We discuss possible follow-up questions in Section~\ref{sec:discussion}.\\\\
 As a result, we can conclude that the polynomial approximation
 barrier is a rather general phenomenon that occurs for commonly used
 rotationally invariant kernels in high dimensional regimes.  For
 ground truths that are inherently higher-degree polynomials that
 depend on all dimensions, our theory predicts that consistency of
 kernel learning with fully-connected NTK, standard RBF or inner
 product kernels is out of reach if the data is high-dimensional.  In
 practice however, it is possible that not all dimensions carry
 equally relevant information. In Section~\ref{sec:expreal} we show
 how feature selection can be used in such settings to circumvent the bias
 lower bound. 
 On the other hand, for image datasets like CIFAR-10
 where the ground truth is a complex function of all input dimensions,
 kernels that incorporate convolutional structures (such as CNTK
 \cite{Arora19,Novak19} or compositional kernels
 \cite{Daniely16,Shankar20,Mei21b}) and hence break the rotational symmetry
 can perform quite well.

\section{Experiments}
\label{sec:exp}

%
%

In this section we describe our synthetic and real-world experiments to further illustrate our theoretical results and underline the importance of feature selection in high dimensional
kernel learning.

\subsection{Hilbert space norm increases with dimension $d$}
\label{sec:expnorm}
In Figure \ref{fig:rkhsnorm}, we demonstrate how the Hilbert norm of
the simple sparse linear function $\fexp(\x) = \xel{1}$ grows with
dimension $d$.  We choose the scaling $\bw = d$ and consider the Hilbert space induced by the
scaled 
Gaussian $\kernf_{\bw}(\xk,\xkd) = \exp(-\frac{\| \xk-\xkd
  \|^2}{\bw})$, Laplace $\kernf_{\bw}(\xk,\xkd) = \exp(-\frac{ \| \xk-\xkd
  \|_2}{\sqrt{\bw}})$ and exponential inner product $\kernf_{\bw}(\xk, \xkd)
= \exp(-\frac{\xk^T\xkd}{\bw})$ kernels.
To estimate the norm, we draw $7500$ i.i.d.~random samples with noiseless observations from the uniform distribution on $\XX=[0,1]^d$.

\subsection{Illustration of the polynomial approximation barrier}

\label{sec:expsynthetic}
We now provide details for the numerical experiments in
Figure~\ref{fig:polymodels},\ref{fig:taudependence} that illustrate
the lower bounds on the bias in Theorem~\ref{thm:main_thm} and
Theorem~\ref{thm:otherchoicesoftau}.
For this purpose, we consider the
following three different distributions that satisfy the assumptions
of the theorems and are covered in previous works
\begin{itemize}
    \item \textbf{$\prob_1$, $\prob_2$:} $\standardrv_{(i)} \sim \Normal(0,1)$, $d = \floor{n^{\beta}}$ and $\inputcov = \textrm{I}_d$. $\prob_1$: $X = \inputcov^{1/2} \standardrv$, $\prob_2$: $X = \frac{\sqrt{d} \inputcov^{1/2} \standardrv}{ \|\inputcov^{1/2} \standardrv \|_2} $ .
    \item \textbf{$\prob_3$:} $X = \inputcov^{1/2} \standardrv$, $\standardrv_{(i)} \sim \Uniform([-u,u]^d)$, with $u$ s.t. unit variance, $d = n$ and $\inputcov$ diagonal matrix with entries $(1 - ((i-1)/d)^{\kappa})^{1/\kappa}$ and $\kappa \geq 0$ s.t. $\trace(\inputcov) = n^{\beta}$ .
\end{itemize}
We primarily use the Laplace kernel\footnote{We choose the Laplace kernel because of its numerical
  stability and good performance on the high dimensional datasets
  studied in \cite{Belkin18c,Geifman20}. Other kernels can be found in
  the \suppmat~\ref{sec:appexp}.} with $\bw = \tr(\inputcov)$
unless otherwise specified
and study two 
sparse monomials as ground truth functions, $\fexp_1(\x) = 2 \xel{1}^2$ and
$\fexp_2(\x) = 2 \xel{1}^3$. In order to estimate the bias~$\| \EEobs \fhatinterpol - \ftrue \|^2_{\Ell_2(\prob_X)}$ of the minimum norm interpolant we generate noiseless observations and approximate the
expected squared error using $10000$ i.i.d. test samples. \\

In Figure~\ref{fig:fig2a} and \ref{fig:fig2b}, we
plot the dependence of the bias on the parameter $\beta$ which
captures the degree of high-dimensionality, i.e. how large dimension
$d$ is compared to the number of samples $n$. We vary $\beta$ by
fixing $n$ and increasing $d$ (see also~\suppmat~\ for plots for fixed
$d$ and varying $n$). In Figure \ref{fig:fig2a} we demonstrate the
important consequence of our unifying framework that the polynomial
barrier only depends on the growth of the effective dimension
$\tr(\inputcov)$, parameterized by $\beta$ independent of the specific
choice of the distribution.\footnote{The differences in the absolute
  bias values between data models are due to the differences of
  $\|\fhatinterpol - \ftrue\|_{\Ell_2(\prob_X)}$ that explicitly
  depend on the underlying input model $\prob_X$.} The horizontal lines
that indicate the bias of the optimal linear fit $\flin$, show how for
large $\beta$, kernel learning with the Laplace kernel performs
just as well as a linear function.  Figure \ref{fig:fig2b} shows the
bias curve as a function of $\beta$ with inputs drawn from $\prob_1$ for different
choices of $n$. The bias curves are identical from which we conclude
that we already enter the asymptotic regime for $d,n$ as low as $d\sim
50$ and $n\sim 2000$.

While our theoretical results only show lower bounds on the bias, Figure \ref{fig:fig2a} and \ref{fig:fig2b} suggest that as  $\beta$  increases, the bias in fact stepwise aligns with the best polynomial of lower and lower order. This has also been shown in \cite{Ghorbani19,Ghorbani20} for the uniform distribution from the product of two spheres. Indeed, with decreasing $\beta$, for the cubic polynomial in Figure \ref{fig:fig2b} we first learn linear functions (first descent in the curve). Since the best degree 2 polynomial approximation of $f^*_2$ around $0$ is a linear function, the curve then enters a plateau before descending to zero indicating that we successfully learn the ground truth. 


Figure \ref{fig:taudependence} illustrates how the bias depends on the
scaling $\bw$ for different $\beta$ with $n=2000$. We generate
samples using $\prob_1$ and the ground truth $\fexp_2$ and plot the bias
of the minimum norm interpolator for the Laplace, exponential inner
product and Gaussian kernel.  For the latter, the minimum is obtained
around $\bw = d$.  For the Laplace and exponential inner product
kernel, the bias achieves its minimum at the flat limit $\bw
\to\infty$.  Given that our lower bounds hold for both $\bw = d$ and
$\bw \to\infty$ (Theorems~\ref{thm:main_thm} and
\ref{thm:otherchoicesoftau}), we hypothesize that there might not
exist an intermediate scaling regime for $\bw$ that can
break the polynomial approximation barrier.

\subsection{Feature selection for high-dimensional kernel learning}
\label{sec:expreal}

In this section, we demonstrate how the polynomial approximation
barrier limits the performance in real world datasets and how one may
overcome this issue using feature selection.  How our theory motivates
feature selection can be most cleanly illustrated for complex ground
truth functions that only depend on a number of covariates that is
much smaller than the input dimension (which we refer to as
\emph{sparse})\footnote{for more general
  functions that are not sparse we still expect a U-curve, albeit
  potentially less pronounced, whenever the function is not a
  low-degree polynomial of order $2/\beta$}:

Based on our theoretical results we expect that for
sparse ground truths, the bias follows a U-shape as dimension
increases: until all relevant features are included, the bias first
decreases before it then starts to decrease due to the polynomial
approximation barrier that holds for large $d$ when asymptotics start
to kick in.
Since recent work shows that the variance vanishes
in high-dimensional regimes (see e.g. \cite{Liang20b,
  Ghorbani20}), 
we expect the risk to follow a U-shaped curve as well.  Hence,
performing feature selection could effectively yield much better
generalization for sparse ground truth functions.  We would like to
emphasize that although the described curve may ring a familiar bell,
this behavior \emph{is not} due to the classical bias-variance
trade-off, since the U-shaped curve can be observed even in the
noiseless case where we have zero variance.  
We now present experiments that demonstrate the U-shape of the risk
curve for both synthetic experiments on sparse ground truths and
real-world data.  We vary the dimensionality $d$ by performing feature
selection\footnote{we expect other approaches to
  incorporate sparsity such as automatic relevance determination
  \cite{Neal96,MacKay96} to yield a similar effect} using the algorithm proposed in the paper \cite{Chen17}. In order to study
the impact of high-dimensionality on the variance, we add different
levels of noise to the observations.

For the real-world experiments we are not able to decompose the risk
to observe separate trends of the bias and the variance. However, we can
argue that the ridge estimator should perform significantly
better than the minimum norm interpolator whenever the variance dominates.
Vice versa, if their generalization errors are close, the effect of the
variance vanishes. 
This is due to the fact that the ridge penalty decreases the variance and
increases the bias. Hence, for real-world experiments, we use the comparison
between the risks of the ridge estimator and minimum norm interpolator
to deduce that the bias is in fact dominating the risk for high dimensions.



%

\begin{figure*}[t]
  \centering
          \begin{subfigure}{0.32\linewidth}
        \centering
  \includegraphics[width=\textwidth]{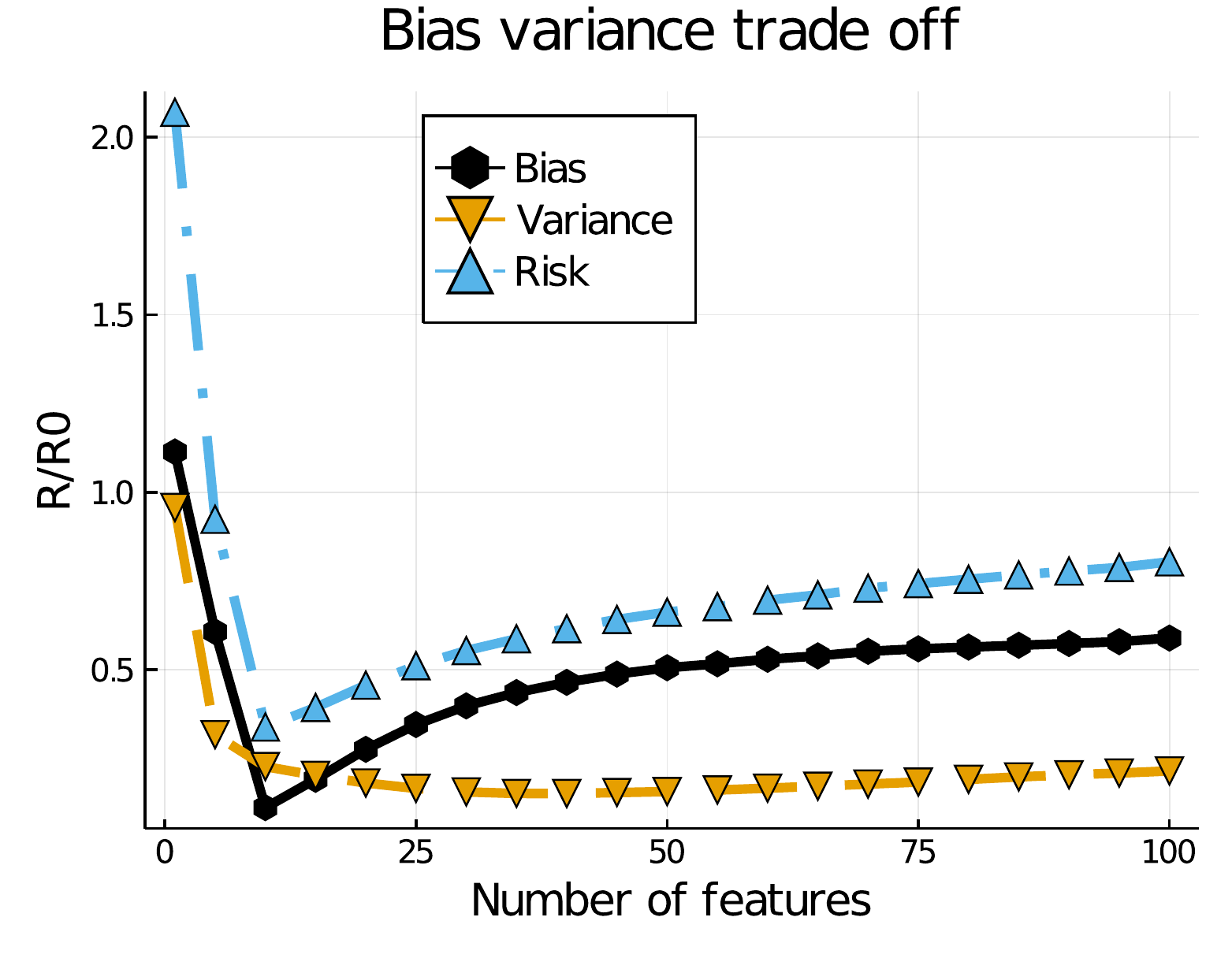}
    \caption{Bias variance trade-off}
    \label{fig:biasvar}
    \end{subfigure}  
    \hfill
  \begin{subfigure}{0.32\linewidth}
    \centering
  \includegraphics[width=\textwidth]{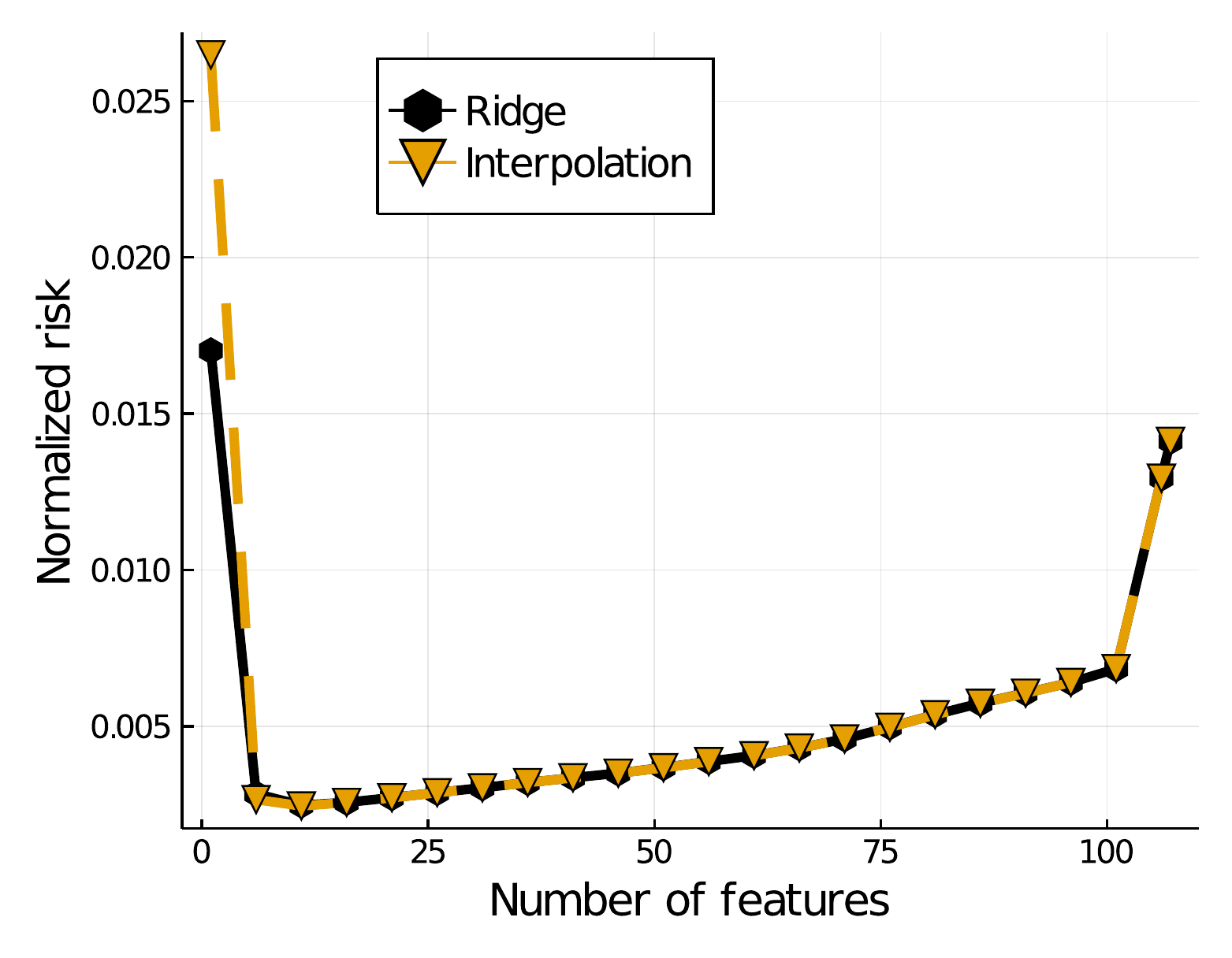}
    \caption{Residential housing - original}
    \label{fig:rh}
  \end{subfigure}
  \hfill
    \begin{subfigure}{0.32\linewidth}
    \centering
  \includegraphics[width=\textwidth]{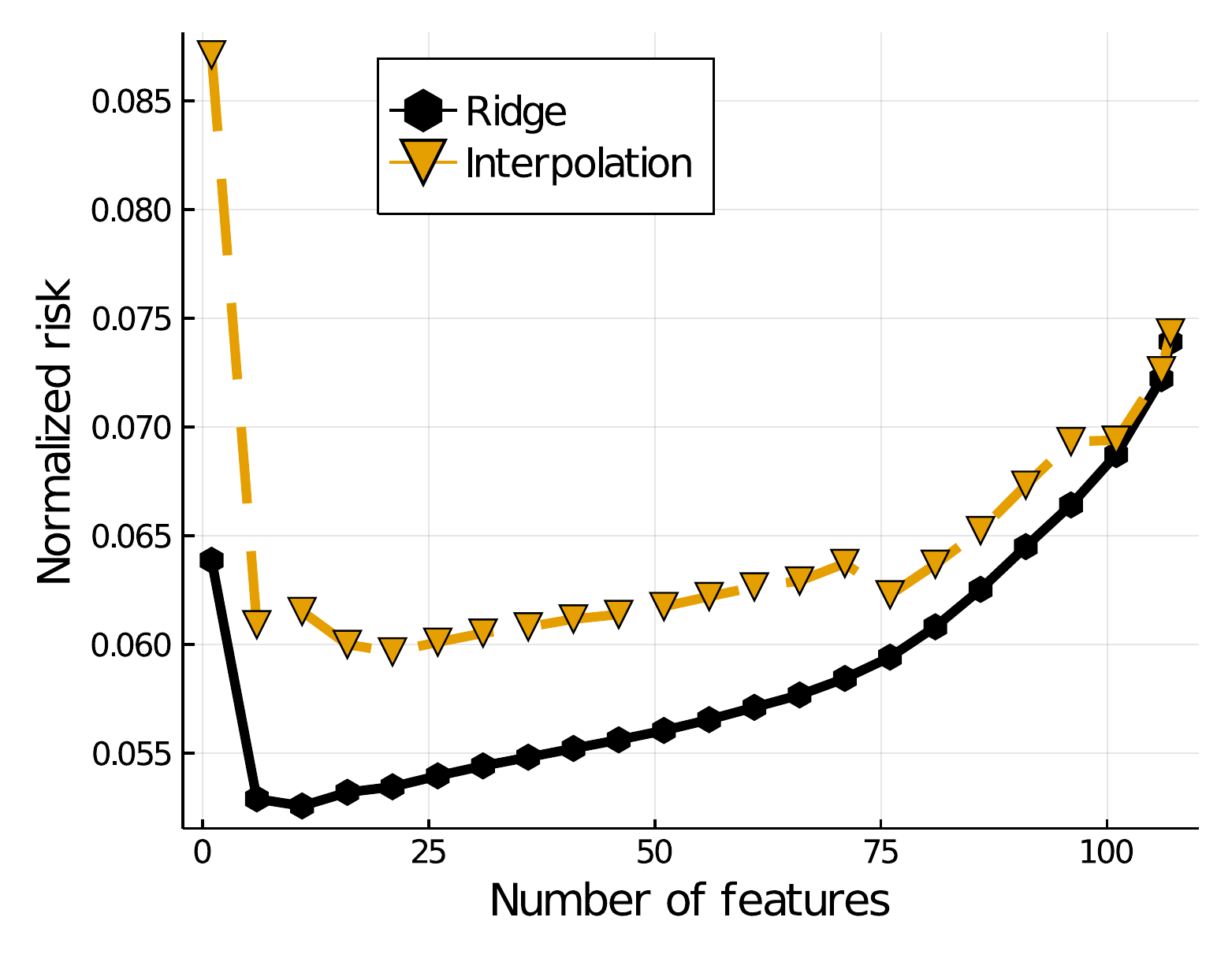}
    \caption{Residential housing - add. noise}
    \label{fig:rh2}
      \end{subfigure}
          \caption{\small{
             (a): The bias-variance trade-off of the minimum norm interpolant normalized by $\Bias(0)$ for 
             a synthetic experiment as a function of selected features (see details in Section \ref{sec:expreal}). The trends are reversed compared to the usual bias-variance curve as a function of model complexity and reflect our theoretical results: the bias term dominates the risk as dimension increases while the variance monotonically decreases with dimension. This behaviour can also be observed for the \textit{residential housing} dataset (b) without and (c) with additive synthetic noise where the risk of
            Ridge regression and interpolation follow similar trends that we hence attribute to the bias. }}
    \label{fig:realworld}
\end{figure*}
\paragraph{Sparse functions}
For our synthetic experiments, we draw $500$ input samples $x_1,\dots, x_n$ 
from $\prob_1$ and compute the minimum norm interpolator for $n=100$
different draws over noisy observations from a sparsely parameterized
function $y= 0.5\sum_{i=1}^4 \xel{2i+1}^2 - \sum_{i=1}^4 \xel{2i} +
\epsilon$ with uniform noise $\epsilon \sim \UU([-10,10])$.  We
increase $d$ by adding more dimensions (that are irrelevant for the
true function value) and compute the bias and variance of the minimum
norm interpolator. We approximate the $\Ell_2(\prob_x)$ norm using
Monte Carlo sampling from $\prob_1$. In Figure~\ref{fig:biasvar} we observe that as
we increase the number of selected features, the bias first
decreases until all relevant information is included and then
increases as irrelevant dimensions are added. This is in line with
our asymptotic theory that predicts an increasing bias due to the
progressively more restrictive polynomial approximation barrier.
Furthermore, the variance monotonically
decreases as expected from the literature and hence the risk in
follows the U-shaped curve
described above.


\paragraph{Real-world data}
We now explore the applicability of our results on real-world data
where the assumptions of the theorems are not necessarily satisfied.
For this purpose we select datasets where the number of features
is large compared to the number of samples. 
In this section we show results on the regression dataset
\textit{residential housing} (RH) with $n=372$ and $d= 107$ to predict
sales prices from the UCI website \cite{Dua17}. Further datasets
can be found in \suppmat~\ref{sec:app_expreal}.  In order to
study the effect of noise, we generate an additional dataset (RH-2)
where we add synthetic i.i.d.  noise drawn from the uniform
distribution on $[-1/2,1/2]$ to the observations. The plots in
Figure \ref{fig:realworld} are then generated as follows:  we increase
the number of features using a greedy forward selection procedure (see~\suppmat~\ref{sec:app_expreal} for further details ). We
then plot the risk achieved by the kernel ridge and ridgeless
estimate using the Laplace kernel on the new subset of features.

Figure~\ref{fig:rh} shows that the risks of the minimum norm interpolator
and the ridge estimator are identical, indicating that the risk is
essentially equivalent to the bias. Hence our first conclusion is
that, similar to the synthetic experiment, the bias follows a U-curve. For
the dataset RB-2 in Figure~\ref{fig:rh2}, we further observe that even
with additional observational noise, the ridge and ridgeless estimator
converge, i.e. the bias dominates for large $d$. 
We observe both trends in other high-dimensional datasets
discussed in~\suppmat~\ref{sec:app_expreal} as well. As a consequence, we can
conclude that even for some real-world datasets that do not necessarily
satisfy the conditions of our bias lower bound, feature selection is crucial
for kernel learning for noisy and noiseless observations alike.
We would like to note that this conclusion
does not necessarily contradict empirical work that
demonstrates good test performance of RBFs on other high-dimensional
data such as MNIST. In fact, the latter only suggests that linear or
polynomial fitting would do just as well for these datasets which has indeed been
suggested in \cite{Ghorbani20}.

\section{Conclusion and future work}
\label{sec:discussion}

Kernel regression encourages estimators to have a certain structure by means
of the RKHS norm induced by the kernel. 
For example, the eigenvalue decay of $\alpha$-exponential kernels
results in estimators that tend to be smooth (i.e. Gaussian kernel)
or more spiky (i.e. small $\alpha<1$).
A far less discussed fact is that many kernels implicitly incorporate
additional structural assumptions. For example, rotational invariant
kernels are invariant under permutations and hence treat all
dimensions equally
Even though rotational invariance is a natural choice when no prior information on
the structure of the ground truth is available, this paper shows that the
corresponding inductive bias in high dimensions is in fact restricting
the average estimator to a polynomial.
In particular, we show in Theorems \ref{thm:main_thm} and
\ref{thm:otherchoicesoftau} that the lower bound on the bias is simply
the projection error of the ground truth function onto the space of
polynomials of degree at most $2\floor{2/\beta}$ respectively
$\floor{2/\beta}$. Apart from novel technical insights that result
from our unified analysis (discussed in Sec.~\ref{sec:contributions}),
our result also opens up new avenues for future research.

\paragraph{Future work} 
%


%
%
%
 Modern datasets which require sophisticated methods like deep neural
 networks to obtain good predictions are usually inherently
 non-polynomial and high-dimensional. Hence, our theory predicts that
 commonly used rotationally invariant kernels cannot perform well for
 these problems due to a high bias. In particular, our bounds are
 independent of properties like the smoothness of the kernel function
 and cannot be overcome by carefully choosing the eigenvalue decay.
 Therefore, in order to understand why certain highly overparameterized
 methods generalize well in these settings, 
 our results suggest that it is at least
 equally important to understand how prior information can be
 incorporated to break the rotational symmetry of the kernel
 function. Examples for recent contributions in this direction are
 kernels relying on convolution structures (such as CNTK
 \cite{Arora19,Novak19} or compositional kernels
 \cite{Daniely16,Shankar20,Mei21b}) for image datasets.

 Another relevant future research direction is to present a tighter
 non-asymptotic analysis that allows a more accurate characterization of
 the estimator in practice. 
 The presented results in this paper are
 asymptotic statements, meaning that they do not provide explicit
 bounds for fixed $n,d$.
 Therefore, for  given finite $n,d$ it is unclear which high-dimensional regime
 provides the most accurate characterization of the estimator's
 statistical properties. For instance, our current results do not
 provide any evidence whether the estimator follows the bias lower bounds for $n = d^{\beta}$ with
 $\beta = \log(n)/\log(d)$ or  $n = \gamma d$.
 We remark that the methodology used to prove
 the statements in this paper could also be used to derive
 non-asymptotic bounds, allowing us to further investigate this
 problem. However, we omitted such results in this paper for the sake
 of clarity and compactness of our theorem statements and proofs and
 leave this for future work.

\section{Acknowledgments}
We would like to thank the anonymous reviewers for their helpful
feedback, Xiao Yang for insightful discussions on the flat limit kernel and
Armeen Taeb for his comments on the manuscript.

\bibliographystyle{icml2021}
\bibliography{bibliography}

\appendix

\section{Experiments}
\label{sec:appexp}
This section contains additional experiments not shown in the main text. \footnote{Our code is publicly available at \href{https://www.github.com/DonhauserK/High-dim-kernel-paper/}{https://www.github.com/DonhauserK/High-dim-kernel-paper/}}

\subsection{Polynomial Barrier}

In this section, we provide
additional experiments that discuss Theorem \ref{thm:main_thm}. In particular, we
investigate kernels beyond the Laplace kernel and study the behaviour
of the bias with respect to $\beta$ when $d$ is fixed and $n$
varies. The experimental setting is the same as the one in Section
\ref{sec:expsynthetic}.
\begin{figure*}[htbp]
    \centering
  \begin{subfigure}{0.3\linewidth}
    \centering
  \includegraphics[width=\textwidth]{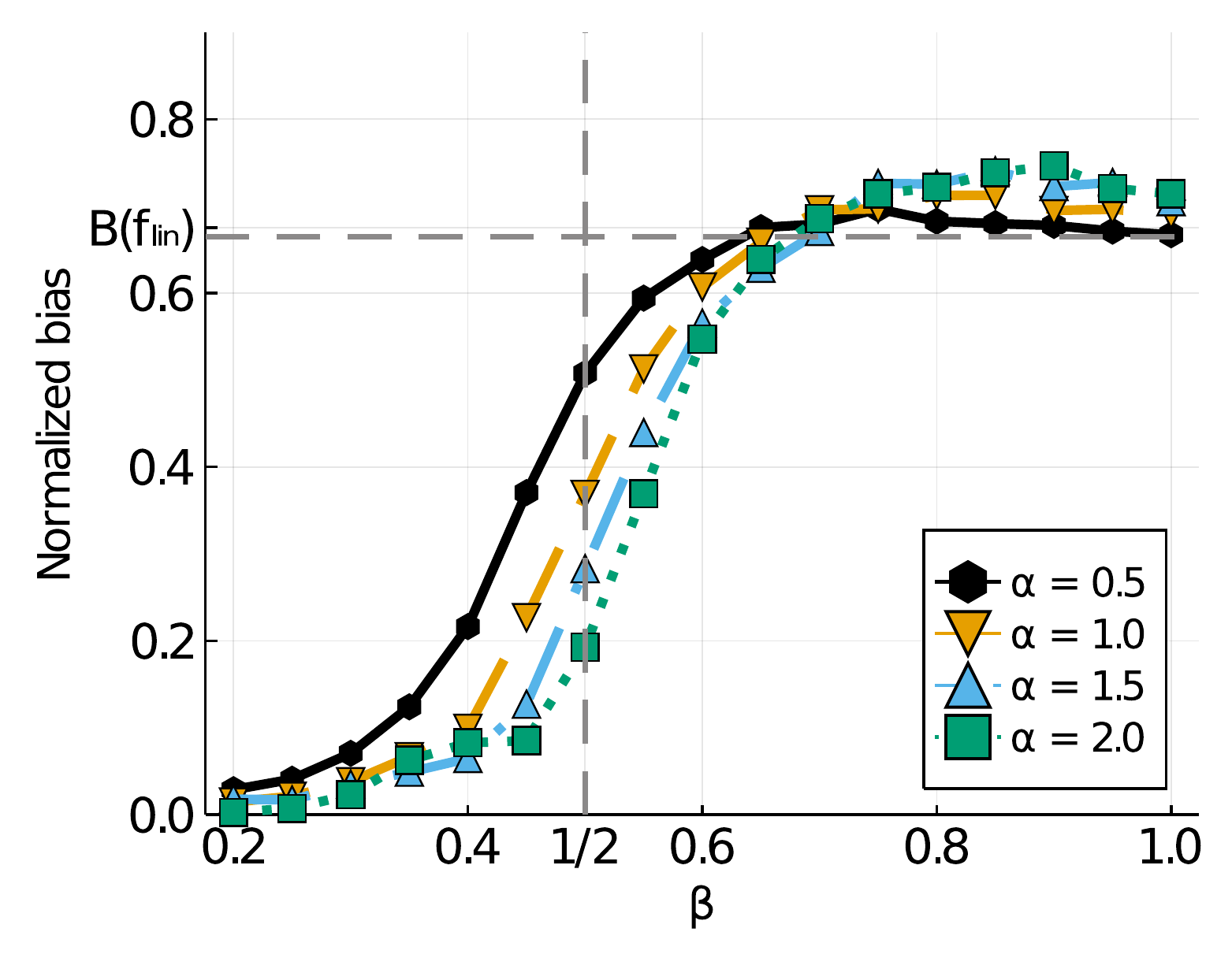}
\caption{$\prob_1$}
  \end{subfigure}
    \begin{subfigure}{0.3\linewidth}
    \centering
  \includegraphics[width=\textwidth]{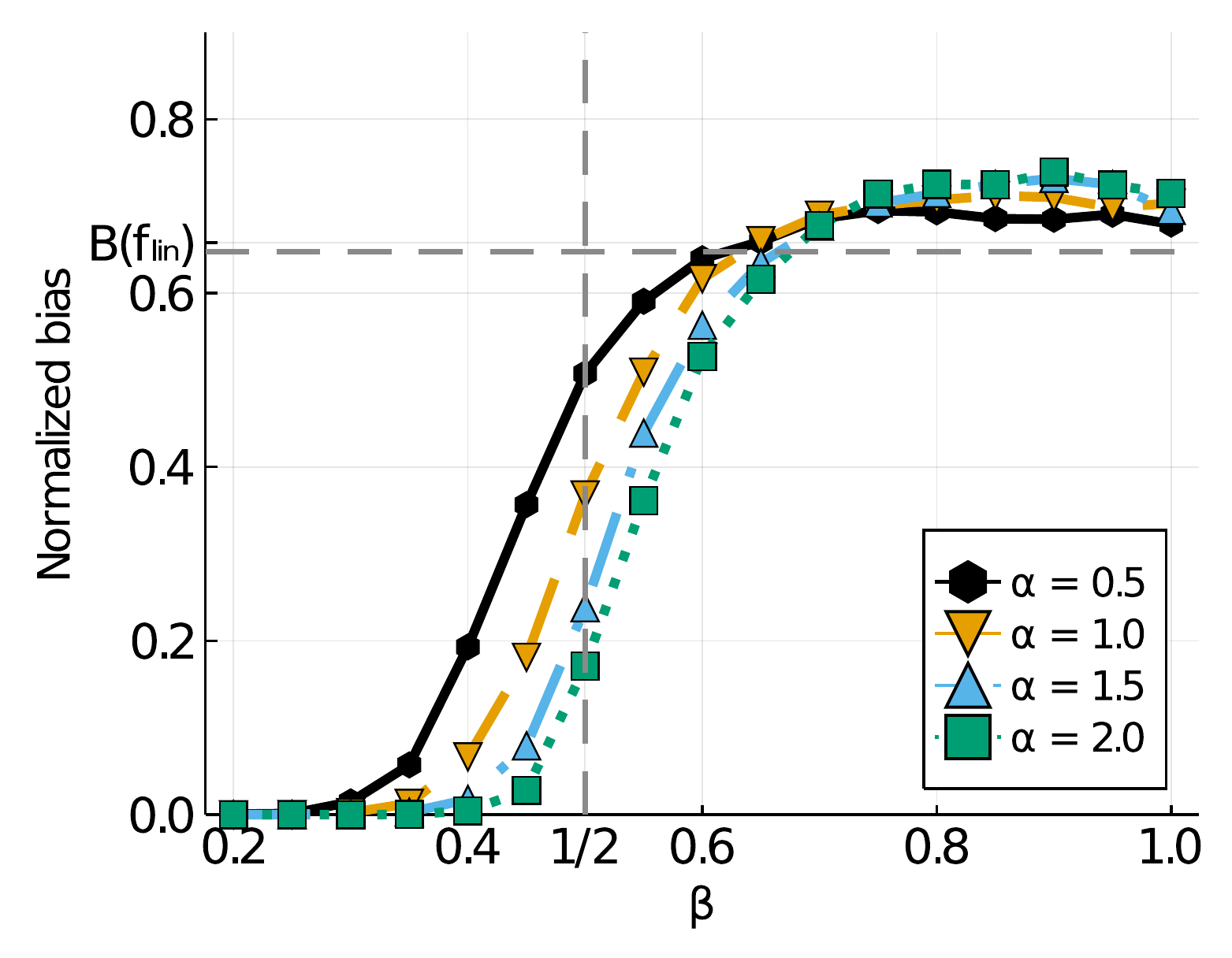}
\caption{$\prob_2$}
  \end{subfigure}
    \begin{subfigure}{0.3\linewidth}
    \centering
  \includegraphics[width=\textwidth]{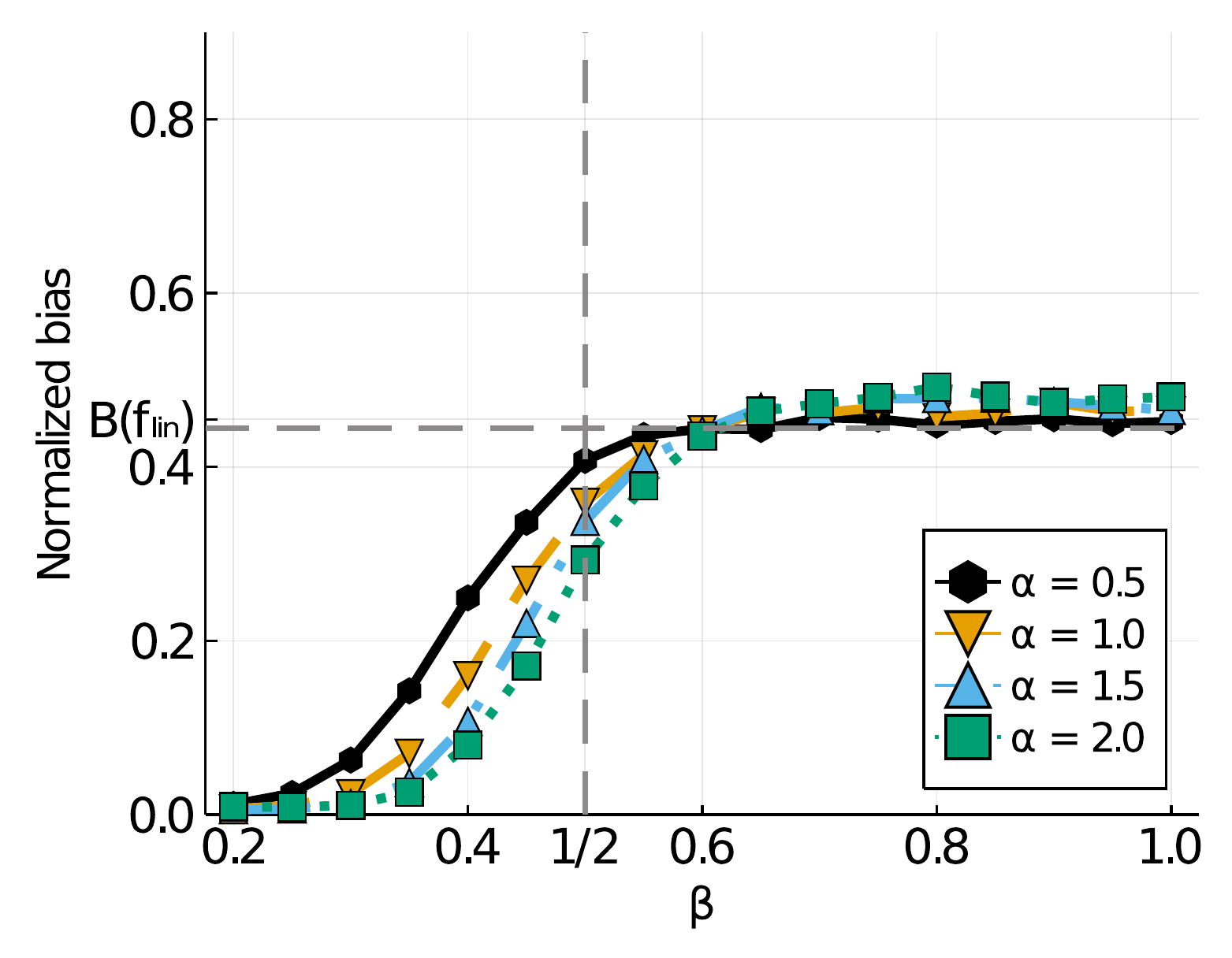}
\caption{$\prob_3$}
  \end{subfigure}
      \caption{\small{The bias of the minimum norm interpolant
    $\Bias(\fhatinterpol)$ normalized by $\Bias(0)$ as a function of
    $\beta$ for the $\alpha$-exponential kernel with different choices of $\alpha$ and with $n=4000$ i.i.d. samples drawn from (a) $\prob_1$, (b) $\prob_2$ and (c) $\prob_3$.}}
  \hfill
  \label{fig:app2}
  \vspace{-0.15in}
\end{figure*}

Instead of comparing the bias curves for different input distribution
as in Figure \ref{fig:fig2a}, Figure \ref{fig:app2} shows the bias
with respect to $\beta$ for the $\alpha$-exponential kernel,
i.e. $k(\xk,\xkd) = \exp(-\|\xk- \xkd\|_2^{\alpha})$, for different
choices of $\alpha$ and hence for kernels with distinct eigenvalue
decays ($\alpha = 2$ results in an exponential eigenvalue decay while
$\alpha <2$ in a polynomial eigenvalue decay).  Clearly, we can see
that the curves transition at a similar value for $\beta$ which confirms the
the discussion of Theorem~\ref{thm:main_thm} in Section \ref{sec:contributions}
where we argue that the polynomial approximation barrier occurs independently of the
eigenvalue decay.

Figure \ref{fig:app1} shows the bias of the minimum norm interpolant
$\Bias(\fhatinterpol)$ normalized by $\Bias(0)$ for the ground truth
function $\ftrue(\x) = 2\xel{1}^3$ and the Laplace kernel as in
Section \ref{sec:expsynthetic} with $\bw = \effdim$. We observe that the asymptotics
already kick in for $d \approx 40$ since all curves for $d \geq 40$ resemble each other.
This confirms the the trend in Figure \ref{fig:fig2b}.

\begin{figure*}[htbp]
    \centering
  \begin{subfigure}{0.3\linewidth}
    \centering
  \includegraphics[width=\textwidth]{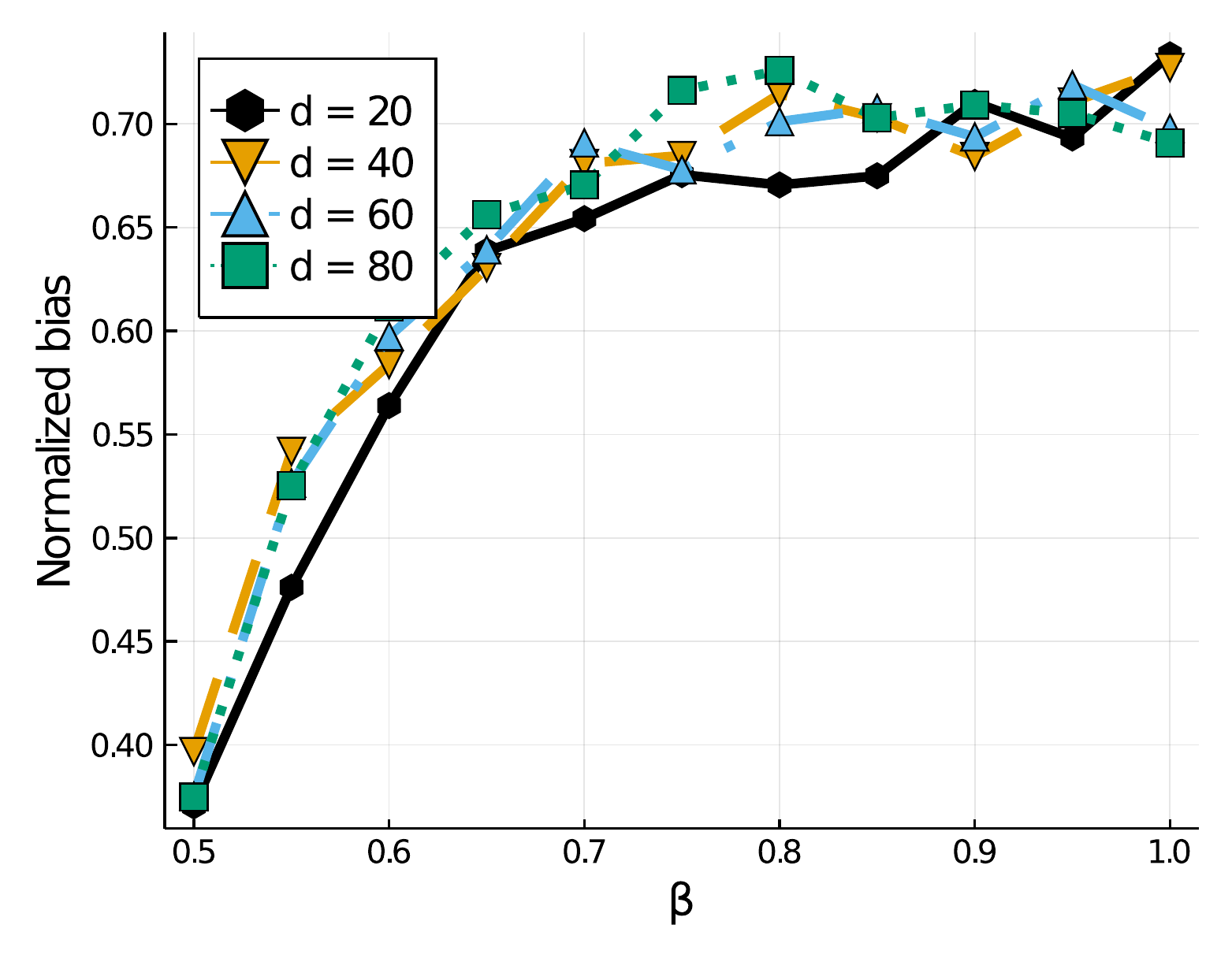}

  \end{subfigure}
      \caption{\small{The bias of the minimum norm interpolant
    $\Bias(\fhatinterpol)$ normalized by $\Risk(0)$ as a function of
    $\beta$ for different choices of  $d$ and samples generated from an isotropic Gaussian (as in Model $\prob_1$) with $n = \floor{d^{1/\beta}}$.   }}
  \hfill
  \label{fig:app1}
    \vspace{-0.15in}
\end{figure*}

\subsection{Feature selection - Synthetic}
\label{sec:app_expsynth}
The goal of this experiment is to compare the bias variance trade-off
of ridge regression and minimum norm interpolation. We use the same
experimental setting as the ones used for Figure \ref{fig:biasvar}
(see Section \ref{sec:expreal}). We set the bandwidth to $\bw
=\effdim$ and choose the ridge parameter $\lambda$ using $5$-fold
cross validation. While for small dimensions $d$, ridge regularization
is crucial to achieve good performance, the bias becomes dominant as
the dimension grows and the difference of the risks of both methods 
shrinks. This aligns well with Theorem
\ref{thm:main_thm} which predicts that the bias starts to increase
with $d$ for fixed $n$ once we enter the asymptotic regime.

%

\begin{figure}[H]
    \centering
  \begin{subfigure}{0.45\linewidth}
    \centering
  \includegraphics[width=0.8\textwidth]{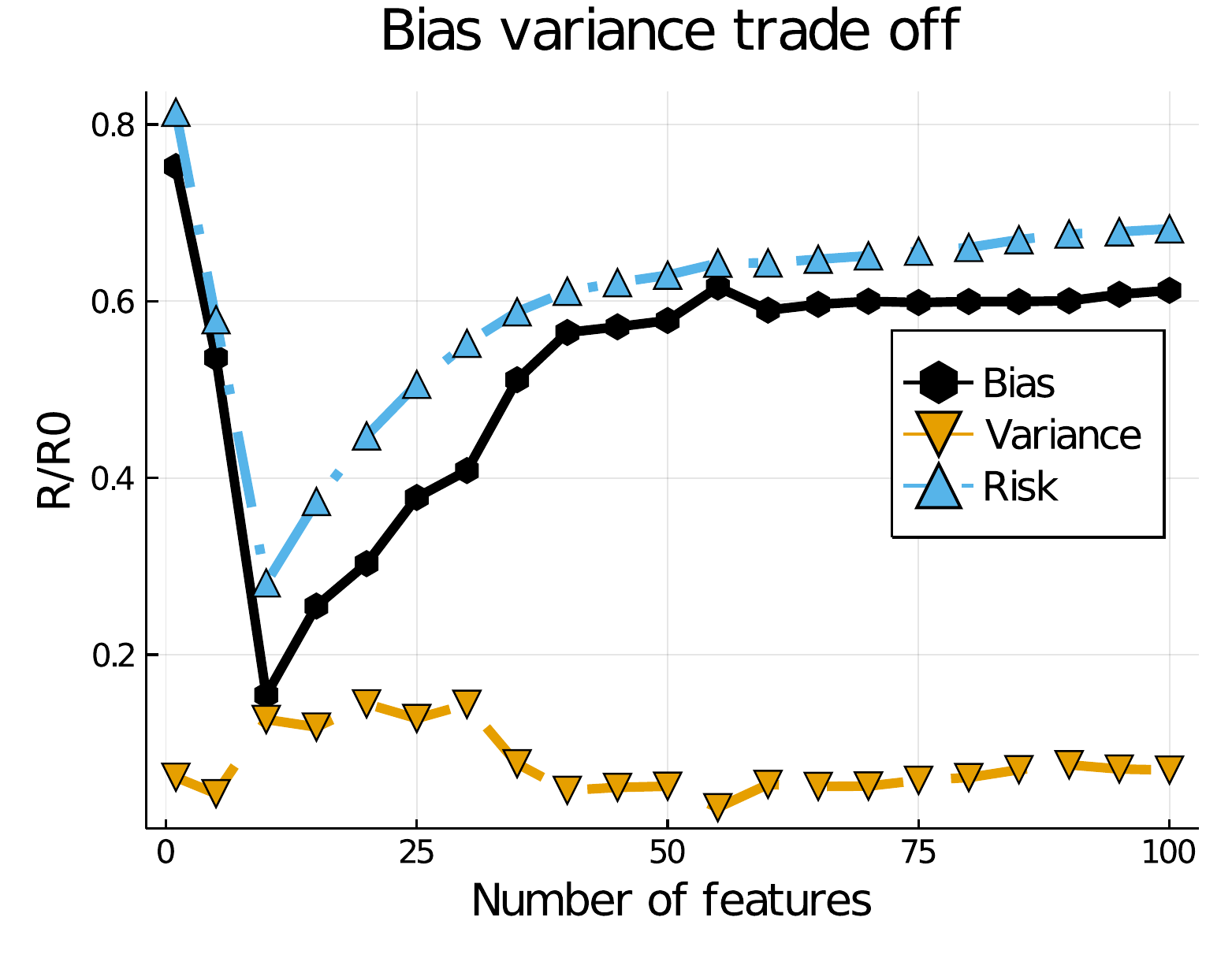}
    \caption{Ridge regression}
  \end{subfigure}
    \begin{subfigure}{0.45\linewidth}
    \centering
  \includegraphics[width=0.8\textwidth]{Plots_v2/fig7int.pdf}
    \caption{Minimum norm interpolation}
      \end{subfigure}
  \hfill
        \caption{\small{
          The bias-variance trade-off of the (a)
          ridge estimate and (b) minimum norm interpolant normalized
          by $\Bias(0)$ as a function of selected features for the
          synthetic experiment described in Section
          \ref{sec:expreal}. Figure (b) is exactly the same as Figure
          \ref{fig:biasvar}. }}
    \vspace{-0.15in}
\end{figure}

\subsection{Feature selection - Real world}
\label{sec:app_expreal}
We now present details for our real world experiments to emphasize the
relevance of feature selection when using kernel regression for 
practical applications, as discussed in Section \ref{sec:expreal}.

We consider the following data sets: 
\begin{enumerate}
 \item The \textit{residential housing} regression data set from the
   UCI website \cite{Dua17} where we predict the sales prices and
   construction costs given a variety of features including the floor
   area of the building or the duration of construction.
 \item The \textit{ALLAML} classification data set from the ASU
   feature selection website \cite{Li18} where we classify patients
   with acute myeloid leukemia (AML) and acute lymphoblastic leukemia
   (ALL) based on features gained from gene expression monitoring via
   DNA microarrays.
 \item The \textit{CLL\_SUB\_111} classification dataset from the ASU
   feature selection web-page \cite{Li18} where we classify
   genetically and clinically distinct subgroups of B-cell chronic
   lymphocytic leukemia (B-CLL) based on features consisting of gene
   expressions from high density oligonucleotide arrays. While the
   original dataset contains three different classes, we only use the
   classes 2 and 3 for our experiment to obtain a binary
   classification problem.
\end{enumerate}

Because the number of features in the \textit{ALLAML} and
\textit{CLL\_SUB\_111} datasets massively exceed the number of
samples, we run the feature selection algorithm in \cite{Chen17} and
pre-select the best $100$ features chosen by the algorithm. In order to reduce the computational expenses, we run the algorithm in batches of $2000$ features and iteratively remove all features except for the best $200$ features chosen by the algorithm. We do this until we reduce the total number of features to $2000$ and then select in a last round the final $100$ features used for the further procedure. Reducing the amount of features to $100$ is
important for the computational feasibility of greedy forward
features selection in our experiments.
The properties of the datasets are summarized in Table
\ref{tab:datasets}.

%

\begin{table}[H]
    \centering
    \begin{tabular}{c|ccc}
        Data set & \textit{Binary CLL\_SUB\_111} &\textit{ALLAML} & Residential Building Data Set
        \\
        \hline
         Features &  11,340 (100) & 7129 (100) & 107 \\
         Samples & 100 & 72 & 372\\
         Type & Binary classification & Binary classification & Regression\\
    \end{tabular}
    \caption{Real world datasets used for the experiments. The value
      in the brackets shows the number of features after a pre-selection
      using the algorithm presented in \cite{Chen17}. }
    \label{tab:datasets}
    \vspace{-0.1in}
\end{table}

\textit{Experimental setting:} As a first step, we normalise both the
vectors containing the single input features and the observations
separately using $\ell_1$ normalization. We use the Laplace kernel for
computing the ridge and ridgeless estimate in all experiments.
For each setting, we pick the bandwidth $\bw$ and
the penalty coefficient $\lambda$ (for the ridge estimator)
using cross validation.
 We increase the number of features by greedily adding the feature
that results in the lowest $5$-fold cross validation risk.
In addition, in order to study the effect of noise, we generate additional data sets
where we add synthetic i.i.d.  noise drawn from the uniform
distribution on $[-1/2,1/2]$ to the observations for the regression
tasks and flip 20\% of the label for the classification tasks.\\

\textit{Results of the experiments:} The following figures present the
results of our experiments on all datasets except for the ones predicting the sales
prices in the \textit{residential housing} dataset, which we presented
in Figure \ref{fig:rh},\ref{fig:rh2} in the main text. Similar to the
observations made in Section \ref{sec:expreal}, Figure
\ref{fig:figalm},\ref{fig:cll},\ref{fig:res2} show that the risk
reaches its minimum around $d \approx 25$, with significant
differences to the right at $d \approx 100$. In particular, this holds
for both ridge regression and interpolation, which again shows that
the bias becomes dominant as the dimension increases. Surprisingly, we
also note that the relevance of ridge regularization seems to be much
smaller for classification tasks than regression tasks.

%

\begin{figure}[hbt!]
    \centering
  \begin{subfigure}{0.45\linewidth}
    \centering
  \includegraphics[width=0.8\textwidth]{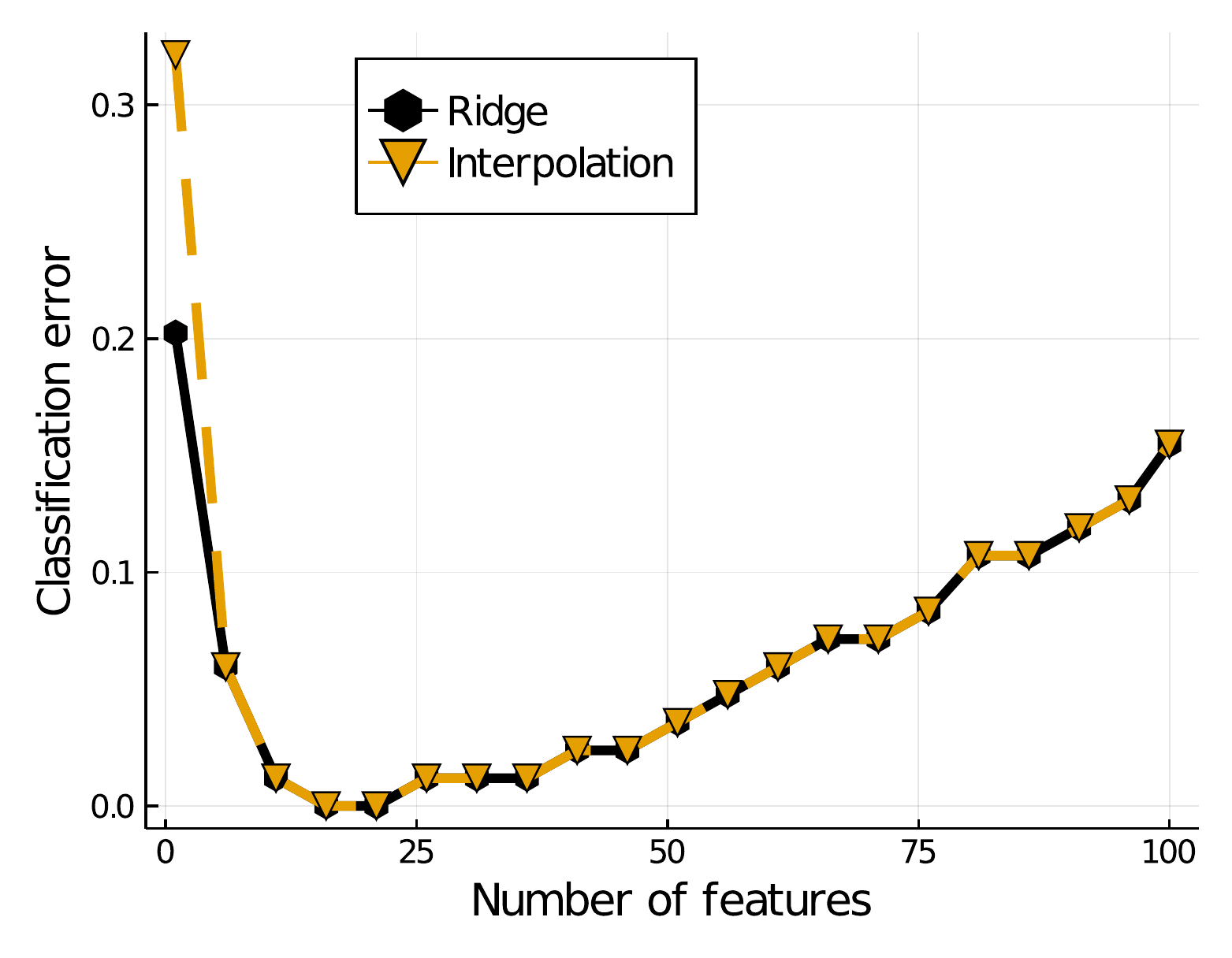}
    \caption{\textit{ALLAML} - original}
  \end{subfigure}
    \begin{subfigure}{0.45\linewidth}
    \centering
  \includegraphics[width=0.8\textwidth]{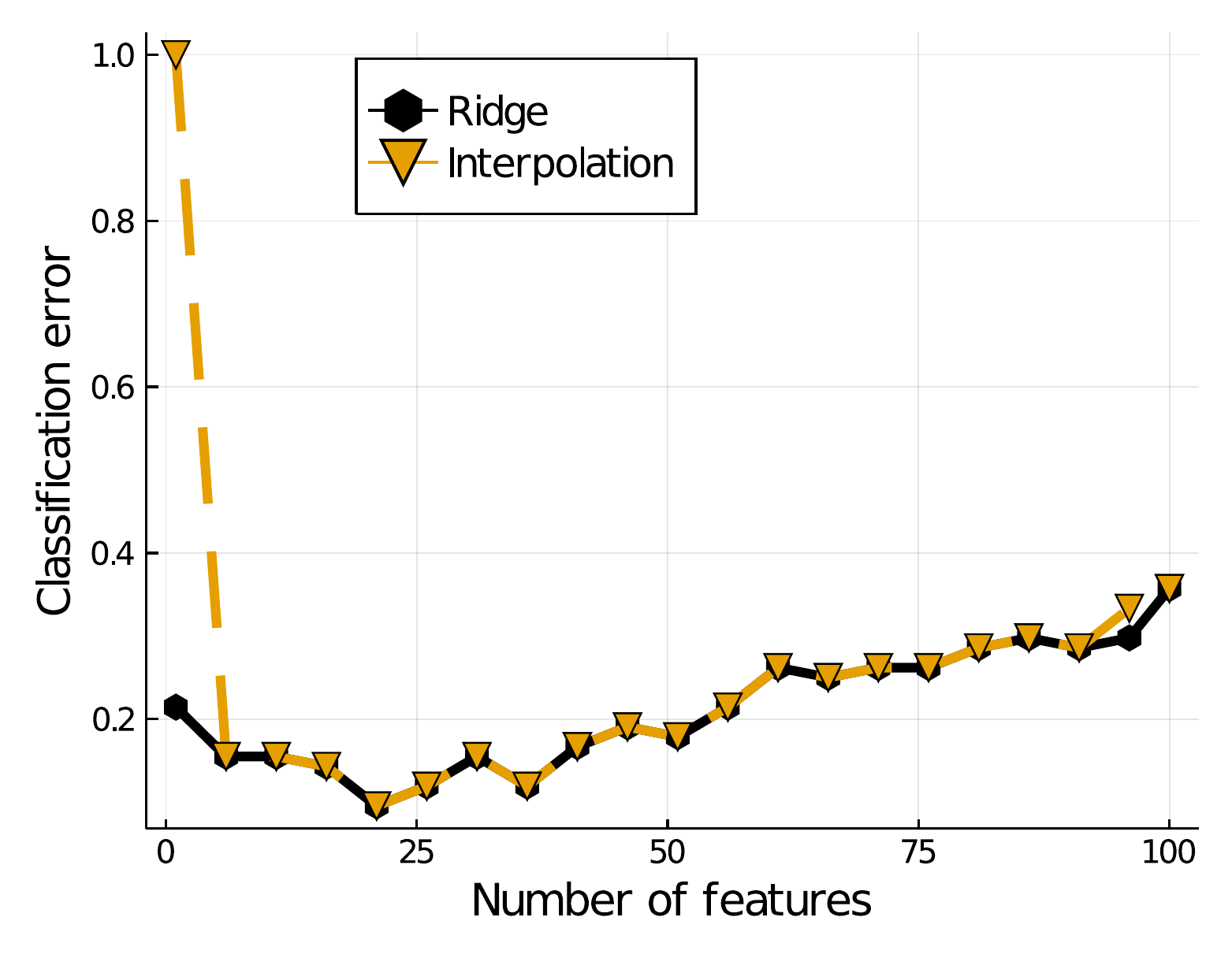}
    \caption{\textit{ALLAML} - 20\% label noise}
      \end{subfigure}
  \hfill
        \caption{ \small{The classification error of the minimum norm interpolator and ridge estimator for the \textit{ALLAML} dataset. }}
    \vspace{-0.15in}
      \label{fig:figalm}

\end{figure}

\begin{figure}[H]
    \centering
  \begin{subfigure}{0.45\linewidth}
    \centering
  \includegraphics[width=0.8\textwidth]{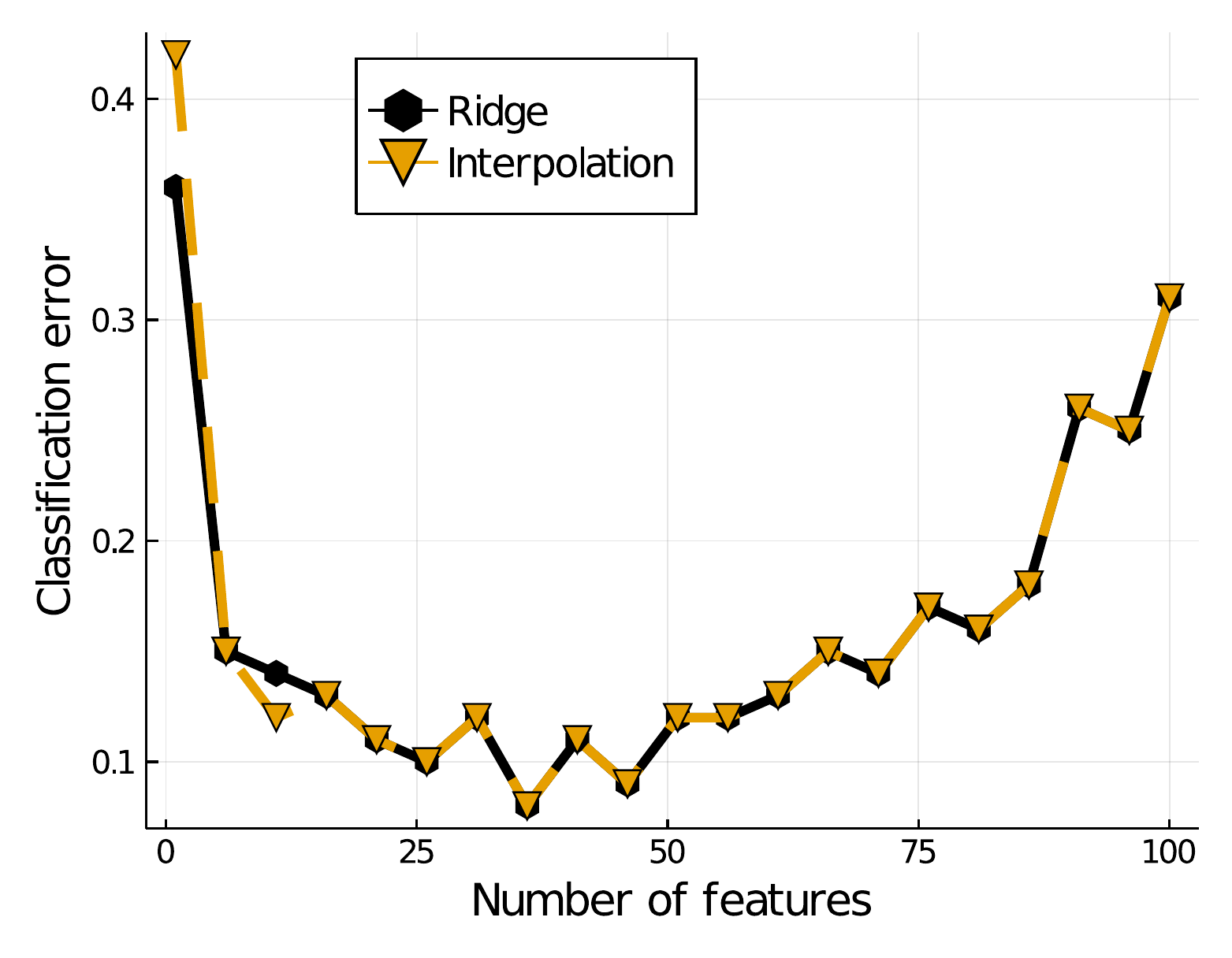}
    \caption{\textit{CLL\_SUB\_111} - original}
  \end{subfigure}
    \begin{subfigure}{0.45\linewidth}
    \centering
  \includegraphics[width=0.8\textwidth]{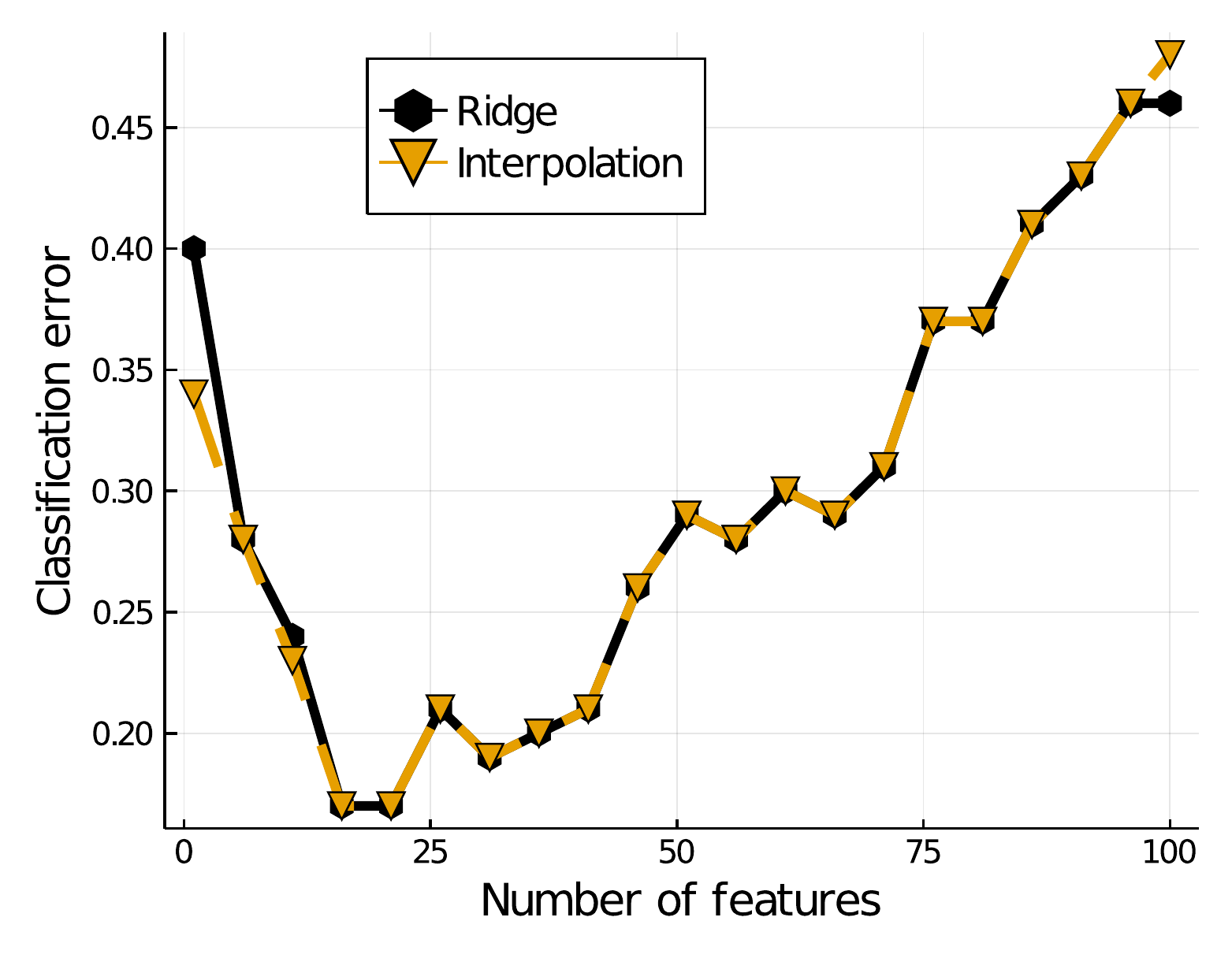}
    \caption{\textit{CLL\_SUB\_111} - 20\% label noise}
      \end{subfigure}
  \hfill
        \caption{\small{The classification error of the minimum norm interpolator and ridge estimator  for the \textit{CLL\_SUB\_111} dataset. }}
    \label{fig:cll}
\end{figure}

\begin{figure}[H]
    \centering
  \begin{subfigure}{0.45\linewidth}
    \centering
  \includegraphics[width=0.8\textwidth]{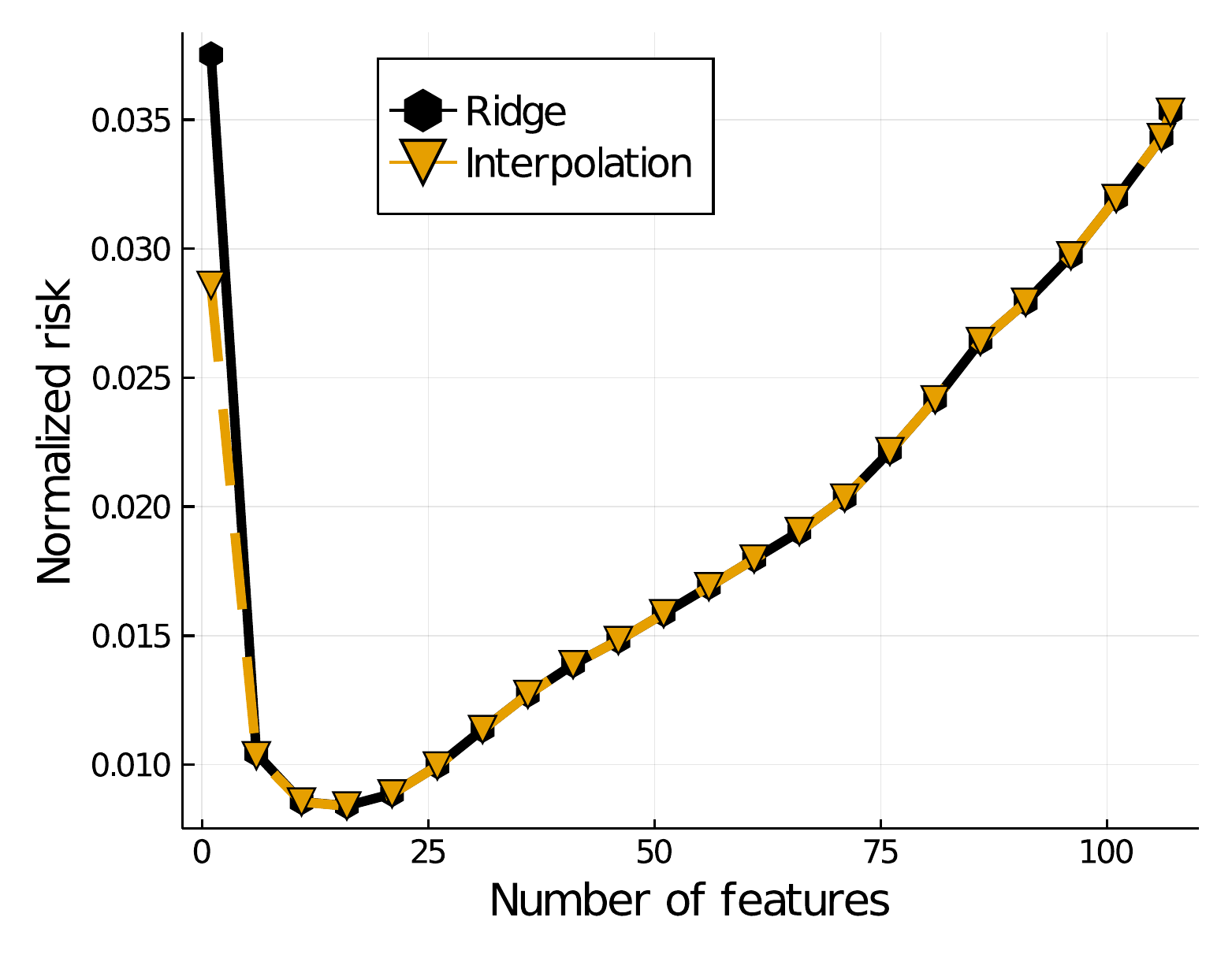}
    \caption{Residential housing - original}
  \end{subfigure}
    \begin{subfigure}{0.45\linewidth}
    \centering
  \includegraphics[width=0.8\textwidth]{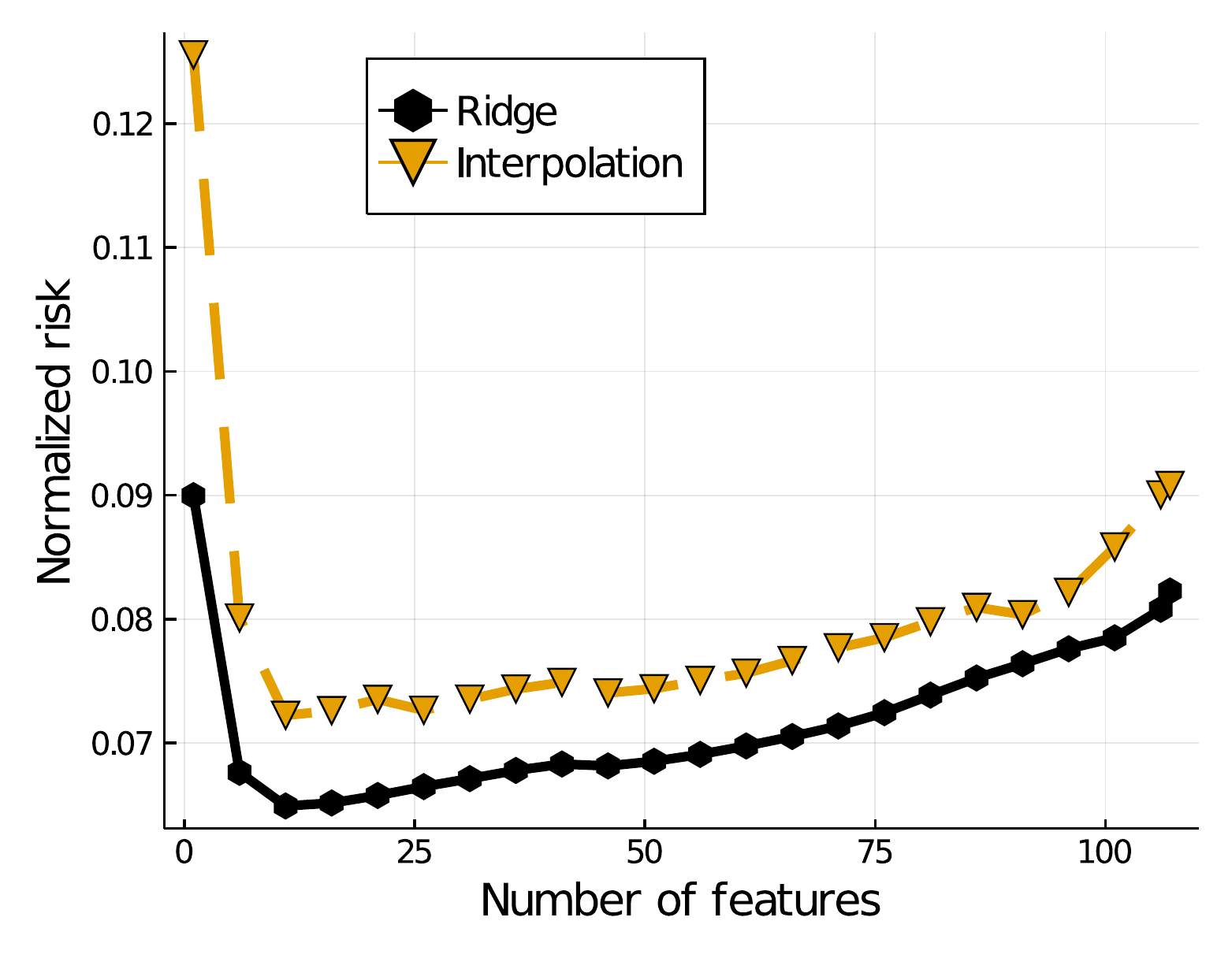}
    \caption{Residential housing - add. noise}
      \end{subfigure}
  \hfill
        \caption{\small{The risk $\Risk(\fhatinterpol)$ of the minimum norm interpolant respectively $\Risk(\fhatridge)$ of the ridge estimate normalized by $\Risk(0)$ for the \textit{residential housing} dataset with target construction costs.  }}
            \label{fig:res2}

    \vspace{-0.15in}
\end{figure}
\clearpage


\section{Bounded Hilbert norm assumption}

\label{sec:norm}

This section gives a formal statement of Lemma \ref{lm:rkhs}. 
We begin with the conditions under which the lemma holds. We consider tensor product kernels of the form 
\begin{equation*}
\label{eq:kernflrkhs}
 \kerfuncseq{\xk}{\xkd}= \prod_{j=1}^d
\kerfuncentry(x_{(j)},x'_{(j)})
\end{equation*}
with inputs $\xk,\xkd \in \XX^{\otimes d} \subset \RR^d$ 
with $\XX$ compact and $\otimes d$ denotes the product space, for some kernel function $\kerfuncentry$ on $\XX$ which may 
change with $d$ (e.g. the scaling). In order to prevent the sequence of kernels $\kernf$ to diverge as $d \to \infty$, assume that there exists some probability measure on $\XX$ with full support such that the trace of the kernel operator is bounded by $1$, i.e. $\int \kerfuncentry(x,x) d\mu(x) \leq 1$. Let $\|. \|_{\Hilbertspace_{k}}$ be the Hilbert norm induced by $k$. Then,

\begin{lemma}[Formal statement of Lemma \ref{lm:rkhs}]
  \label{lm:rkhsformal}
Let $\kernf$ satisfy the above conditions. Then, for any $f$ that is a non-constant sparsely parameterized product
function $f(\xk) = \prod_{j=1}^m f_j(\xel{j})$ for some fixed $m \in
\N$, 
\vspace{-0.1in}

\begin{equation*}
  \| f \|_{\Hilbertspace_k} \overset{d \to \infty}{\to} \infty.
\end{equation*}
\end{lemma}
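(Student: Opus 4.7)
The plan is to exploit the tensor-product structure of $\Hilbertspace_k$ to reduce the $d$-dimensional norm to a power of a one-dimensional quantity, and then to lower bound that one-dimensional quantity via Mercer's decomposition and the trace condition. Since $\kernf$ is a tensor product of the same single-coordinate kernel $k_0$, one has $\Hilbertspace_k = \bigotimes_{j=1}^{d} \Hilbertspace_{k_0}$, and for the pure tensor $f = f_1 \otimes \cdots \otimes f_m \otimes 1 \otimes \cdots \otimes 1$ obtained by padding the sparse factors with the constant function $1$, the norm factorises as
\begin{equation*}
\|f\|_{\Hilbertspace_k}^{2} \;=\; \prod_{j=1}^{m}\|f_j\|_{\Hilbertspace_{k_0}}^{2} \;\cdot\; \|1\|_{\Hilbertspace_{k_0}}^{2(d-m)},
\end{equation*}
with the convention that the right-hand side is $+\infty$ whenever any factor is infinite. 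The prefactor is a positive constant independent of $d$ (positive because otherwise $f \equiv 0$, contradicting non-constancy), so the whole problem reduces to showing $\|1\|_{\Hilbertspace_{k_0}} > 1$ strictly.

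To control $\|1\|_{\Hilbertspace_{k_0}}$ I would use Mercer's theorem. Diagonalise the integral operator $Tg(x) = \int k_0(x,y)\, g(y)\, d\mu(y)$ with eigensystem $(\lambda_i, e_i)_i$ orthonormal in $L^2(\mu)$; the trace condition yields $\sum_i \lambda_i \le 1$, and the RKHS is characterised by $\|g\|_{\Hilbertspace_{k_0}}^{2} = \sum_i c_i^{2}/\lambda_i$ whenever $g = \sum_i c_i e_i$. Expand $1 = \sum_i a_i e_i + r$ in $L^2(\mu)$, where $a_i = \int e_i\, d\mu$ and $r$ lies in the $L^2$-orthogonal complement of $\mathrm{span}(e_i)$. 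If $r \neq 0$, then $1 \notin \Hilbertspace_{k_0}$, so $\|1\|_{\Hilbertspace_{k_0}} = \infty$ and the claim is immediate. Otherwise $\sum_i a_i^{2} = 1$, and Cauchy--Schwarz applied to the sequences $(|a_i|/\sqrt{\lambda_i})$ and $(|a_i|\sqrt{\lambda_i})$ gives
\begin{equation*}
1 \;=\; \Bigl(\sum_i a_i^{2}\Bigr)^{2} \;\leq\; \Bigl(\sum_i \tfrac{a_i^{2}}{\lambda_i}\Bigr)\Bigl(\sum_i a_i^{2}\lambda_i\Bigr) \;\leq\; \|1\|_{\Hilbertspace_{k_0}}^{2}\cdot\lambda_{\max},
\end{equation*}
i.e.\ $\|1\|_{\Hilbertspace_{k_0}}^{2} \geq 1/\lambda_{\max}$.

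It remains to upgrade $\lambda_{\max} \leq 1$ to a \emph{strict} inequality, and this is where the non-constancy hypothesis enters. Assuming $\|f\|_{\Hilbertspace_k}$ is finite (else we are done), the factorisation forces $1 \in \Hilbertspace_{k_0}$ and each $f_j \in \Hilbertspace_{k_0}$, and at least one $f_j$ is non-constant; hence $\Hilbertspace_{k_0}$ contains two linearly independent functions and must have at least two positive eigenvalues, so $\sum_i \lambda_i \leq 1$ forces $\lambda_{\max} < 1$ strictly. Consequently $\|1\|_{\Hilbertspace_{k_0}}^{2} > 1$, and $\|f\|_{\Hilbertspace_k}^{2}$ diverges geometrically in $d$. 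The main obstacle I expect is cleanly justifying the tensor-product norm identity over the infinite Mercer expansion and handling the boundary case $1 \notin \Hilbertspace_{k_0}$; once these are set up the Cauchy--Schwarz step and the dimension-counting argument for strict inequality are routine.
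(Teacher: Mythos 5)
There is a genuine gap: you treat the single-coordinate kernel as a \emph{fixed} kernel $k_0$, but in the paper's setting the coordinate kernel $q$ is explicitly allowed to change with $d$ (this is the whole point of the normalization/trace condition --- the kernels of interest are rescaled with $d$, e.g.\ bandwidth $\bw = d$, so the induced one-dimensional kernel and its Mercer eigensystem are $d$-dependent). Consequently your two key assertions fail in general: the ``prefactor'' $\prod_j \|f_j\|_{\Hilbertspace_{q}}^2$ is not a constant independent of $d$, and your strict bound $\|1\|_{\Hilbertspace_{q}}^2 \geq 1/\lambda_{\max} > 1$ is only a per-$d$ strict inequality, not a uniform one. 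If the top eigenvalue $\lambda_{\max}^{(d)} \to 1$ as $d \to \infty$ (e.g.\ $\lambda_1 = 1-\eps_d$, $\lambda_2 = \eps_d$ with $\eps_d \to 0$), then $\|1\|_{\Hilbertspace_{q}} \to 1$ and the factor $\|1\|_{\Hilbertspace_{q}}^{2(d-m)}$ need not diverge, so ``geometric divergence'' does not follow from strictness alone. This is exactly the case your argument cannot close by itself.

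The paper's proof handles precisely this case: if along a subsequence $\|1\|_{\Hilbertspace_{q}} \to 1$, then (using $\sum_i \alpha_i^2 = 1$, $\sum_i \lambda_i \le 1$) there must be indices $j_d$ with $\alpha_{j_d}^2 \to 1$ and $\lambda_{j_d} \to 1$, so all remaining eigenvalues are at most $1-\lambda_{j_d} \to 0$; since $f_1$ is non-constant and $\mu$ has full support, a fixed amount $c_2>0$ of the $L^2$-mass of $f_1$ sits on those remaining eigenfunctions, forcing $\|f_1\|_{\Hilbertspace_{q}} \geq c_2/(1-\lambda_{j_d}) \to \infty$ --- i.e.\ the very ``prefactor'' you declared constant is what blows up in this regime. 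Your argument is fine (and indeed slightly cleaner, via Cauchy--Schwarz and the two-eigenvalue count) in the special case of a $d$-independent coordinate kernel, but to prove the lemma as stated you must either add the paper's subsequence/contradiction step for the regime $\|1\|_{\Hilbertspace_{q}} \to 1$, or otherwise show a lower bound on $\|f_1\|_{\Hilbertspace_{q}}\,\|1\|_{\Hilbertspace_{q}}^{d-1}$ that is uniform over the sequence of kernels. (Minor additional points to tidy up in your write-up: linear independence of $1$ and the non-constant $f_j$ must be in $L^2(\mu)$, which uses full support of $\mu$ and continuity of RKHS functions, and the case $1 \notin \Hilbertspace_{q}$ or $f_j \notin \Hilbertspace_{q}$ should be dispatched first, as you and the paper both do.)
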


\begin{proof}
 For any $j>m$, define $f_j = 1$. First, we note that the proof follows trivially if any of the $f_j$ is not contained in the RKHS induced by $q$ since this implies that the Hilbert norm $\|f\|_{\Hilbertspace_{k}} = \infty$. Hence, we can assume that for all $j$, $f_j$ is contained in the RKHS for all $d$. Furthermore, because $k$ is a product kernel, we can write
 $\| f \|_{\Hilbertspace_{k}} = \prod_{j=1}^d \| f_j \|_{\Hilbertspace_{q}}$ where $\|. \|_{\Hilbertspace_{q}}$ is the Hilbert norm induced by $q$ on $\XX$. Because we are only interested to see whether the sequence of Hilbert norms diverge, without loss of generality we can assume that $m =1$, and hence,
\begin{equation}
\label{eq:rkhsnormeq}
 \| f \|_{\Hilbertspace_{k}} = \| f_1 \|_{\Hilbertspace_{q}} (\| 1\|_{\Hilbertspace_{q}})^{d-1}.
\end{equation}
  
Next, by Mercer's theorem there exists an orthonormal eigenbasis $\{ \phi_{i} \}_{i=1}^{\infty}$ in $\Ell_2(\XX,\mu)$ with corresponding eigenvalues $\{\lambda_{i} \}_{i=1}^{\infty}$ such that for any $g \in \Hilbertspace_q$,  $\| g \|_{\Hilbertspace_{q}} = \sum_{i=1}^{\infty} \frac{(\langle f, \phi_{i}\rangle)^2}{\lambda_{i}}$, where $\langle f, \phi_{i}\rangle = \int \phi_{i}(x) f(x) d\mu(x) $.
Note that  because the kernel $q$ depends on $d$, $\lambda_i$ and $\phi_i$ also depend on $d$. Next, because by assumption $f(x) =1$ is contained  in the RKHS, there exists $\alpha_i$ such that for every $x \in\XX$, $ 1 = \sum_{i=1}^{\infty} \alpha_{i} \phi_{i}(x)$ and $\sum_{i=1}^{\infty} \alpha_{i}^2 = 1$. Furthermore,
\begin{equation*}
\begin{split}
 1 \geq \int \kerfuncentry(x,x) d\mu(x)& = \int \sum_{i=1}^{\infty} \lambda_{i} \phi_{i}(x)\phi_{i}(x) d\mu(x) =  \sum_{i=1}^{\infty} \lambda_{i}.
\end{split}
\end{equation*}
Combining these results, we get that 
\begin{equation*}
\label{eq:rkhsnormeq2}
 \| 1\|_{\Hilbertspace_{q}} = \sum_{i =1}^{\infty} \frac{\alpha_{i}^2}{\lambda_{i}} \geq 1.
\end{equation*}
Furthermore, there exists $\beta_i$ such that $f_1(x) =
\sum_{i=1}^{\infty} \beta_{i} \phi_i(x)$. Again, because we are only interested to see whether the sequence of Hilbert norms diverge, without loss of generality
we can assume that $\sum_{i=1}^{\infty} \beta_{i}^2 = 1$ and hence
also $\| f_j \|_{\Hilbertspace_{q}} \geq 1$.

First, assume that there exists a subsequence such that $\| 1\|_{\Hilbertspace_{q}} \to 1$. This implies that
there exists a sequence $j_d \in \NN$ such that $\alpha_{j_d}^2 \to 1$
and $\lambda_{j_d} \to 1$. Next, because by assumption $f_1 \neq 1$,
there exists some constant $c_1>0$ such that for all $d$,
\begin{equation*}
  c_1 \leq \int (1-f_1(x))^2 d\mu(x)  = \sum_{i=1}^{\infty} (\alpha_{i} - \beta_{i})^2.
\end{equation*}
Together with the fact that $\alpha_{j_d}^2 \to 1$ it then follows that
$\sum_{i \neq j_d} \beta_{i}^2$ has to be asymptotically lower bounded
by some positive non-zero constant $c_2$ and hence
\begin{equation*}
 \|f_1\|_{\Hilbertspace_{q}} \geq \frac{c_2}{(1- \lambda_{j_d})} \to \infty.
\end{equation*}
This contradicts the assumption that $\|f_1\|_{\Hilbertspace_{q}}$ is
upper bounded by some constant for every $d$. Hence, we are only left with the case where $\|1\|_{\Hilbertspace_q} \geq c>1$, however, this case diverges due to Equation~\ref{eq:rkhsnormeq}. Hence, the proof is complete.
\end{proof}

\section{Proof of Theorem \ref{thm:main_thm}}
\label{sec:proofmainthm}



Before presenting the proof of the (generalized) theorem, we first state the key concentration inequalities used throughout 
the proof. It is an extension of Lemma A.2 in the paper \cite{ElKaroui10}, which iteself is a consequence of the concentration of
Lipschitz continuous functions of i.i.d random vectors.

\begin{lemma}
\label{lm:innerpr}
For any $\prob_X \in \probsetcov$ or $\probsetsphere$, let $\Xs \in \RR^{d\times n}$ consists of i.i.d.~vectors $x_i \sim \prob_X$ 
and $X \sim \prob_X$ be independent of $\x_i$.
For any constants $\epsilon > 0$, define the events
\begin{align}
 \EXset &\define \left\{\X ~\vert \max_{i,j }\left| \x_i^\top  \x_j / \tr(\Sigma_d) - \delta_{i, j}\right| \leq  n^{-\beta/2} (\log(n))^{(1+\epsilon)/2}\right\}\\
\Uset &\define \left\{ \Xrv~ \vert~ \left|~\|\Xrv\|_2^2/\trace(\inputcov)- 1\right|  \leq  n^{-\beta/2} (\log n)^{(1+\epsilon)/2} ~ \mathrm{and}~ \max_{i} |\x_i^\top \Xrv |/\trace(\inputcov)   \leq  n^{-\beta/2} (\log n)^{(1+\epsilon)/2} \right\}
\end{align}
Then, there exists some constant $C>0$ such that for $n$ sufficiently large, 
\begin{align}
\prob(\EXset) &\geq  1- n^2 \exp(-C  (\log(n))^{(1+\epsilon)}) \\
\mathrm{and}~~~\prob(\Uset\vert \EXset) &\geq  1- (n+1)^2 \exp(-C  (\log(n))^{(1+\epsilon)}) 
\end{align}
In particular, the event $\EXset$ holds almost surely with respect to the sequence of data sets $\Xs$ as $n\to \infty$, that is the probability that for infinitely many $n$, $\EXset$ does not hold, is zero. 
\end{lemma}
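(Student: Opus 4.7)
The plan is to derive both bounds from two standard ingredients applied to the i.i.d.\ coordinates of the $\standardrv$-vectors underlying $\Xrv$:
\textbf{(i)} the sub-Gaussian tail for linear forms, $\prob(|\standardrv^\top a|\ge t)\le 2\exp(-c t^2/\|a\|_2^2)$ (Hoeffding for bounded coordinates in $\probsetw$, Gaussian isoperimetry for Gaussian coordinates);
\textbf{(ii)} the Hanson-Wright inequality $\prob(|\standardrv^\top A\standardrv-\EE\standardrv^\top A\standardrv|\ge t)\le 2\exp(-c\min(t^2/\|A\|_F^2,\,t/\opnorm{A}))$ for symmetric $A$.
Since $\opnorm{\inputcov}=1$, we have $\effdim=\trace(\inputcov)\asymp n^\beta$. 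I would treat the covariance-model case $\prob_X\in\probsetcov$ first, writing $x_i=\inputcov^{1/2}\standardrv_i$ with i.i.d.\ $\standardrv_i$.

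For the off-diagonal entries of $\EXset$, I would condition on $x_j$ and apply (i) to $\standardrv_i\mapsto x_i^\top x_j/\effdim=\standardrv_i^\top(\inputcov^{1/2}x_j)/\effdim$, which is linear in $\standardrv_i$ with squared coefficient norm $\|\inputcov^{1/2}x_j\|_2^2/\effdim^2=\standardrv_j^\top\inputcov^2\standardrv_j/\effdim^2$. Its mean is $\trace(\inputcov^2)/\effdim^2\le 1/\effdim$, and (ii) applied to $A=\inputcov^2$ (with $\|\inputcov^2\|_F^2=\trace(\inputcov^4)\le\effdim$ and $\opnorm{\inputcov^2}\le 1$) shows the squared coefficient norm is at most $C/\effdim$ outside a set of probability $2\exp(-c(\log n)^{1+\epsilon})$. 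On this good event, (i) with $t=n^{-\beta/2}(\log n)^{(1+\epsilon)/2}$ yields exponent $ct^2\effdim/C\asymp(\log n)^{1+\epsilon}$, as desired. The diagonal entries $(\|x_i\|_2^2-\effdim)/\effdim$ are handled directly by (ii) with $A=\inputcov$, using $\|\inputcov\|_F^2\le\effdim$ and $\opnorm{\inputcov}=1$. A union bound over the $O(n^2)$ pairs gives the advertised control on $\EXset^c$. The conditional bound $\prob(\Uset\mid\EXset)$ uses the same template: $|\|\Xrv\|_2^2/\effdim-1|$ is one more instance of (ii), while $\max_i|x_i^\top\Xrv|/\effdim$, conditional on $\{x_i\}$, is $n$ instances of (i) with coefficient norms $\|\inputcov^{1/2}x_i\|_2$ already controlled inside $\EXset$.

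For $\prob_X\in\probsetsphere$, write $x_i=\sqrt{\effdim}\,\standardrv_i/\|\standardrv_i\|$ with $\standardrv_i$ drawn from the underlying covariance model. Then $\|x_i\|_2^2/\effdim=1$ identically and $x_i^\top x_j/\effdim=(\standardrv_i^\top\standardrv_j)/(\|\standardrv_i\|\|\standardrv_j\|)$, so the covariance-case analysis applied to the $\standardrv_i$ immediately gives $\|\standardrv_i\|^2/\trace(\inputcov)=1+o(1)$ and the required off-diagonal control, and the sphere case reduces to the covariance case up to a $1+o(1)$ multiplicative factor. Finally, the almost-sure statement follows from Borel-Cantelli applied to $\prob(\EXset^c)\le n^2\exp(-C(\log n)^{1+\epsilon})=n^2\cdot n^{-C(\log n)^\epsilon}$, which is summable in $n$.

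The main obstacle I anticipate is the bookkeeping around the random coefficient norm $\|\inputcov^{1/2}x_j\|_2$ in the off-diagonal step: one has to cleanly combine the conditional sub-Gaussian tail from (i) with the Hanson-Wright tail for this norm from (ii), and verify that the $O(n^2)$ union bound still leaves the $(\log n)^{1+\epsilon}$ factor in the exponent intact. A secondary technical point is choosing concentration constants that are uniformly valid across both the Gaussian and the bounded-support members of $\probsetw$, so that the constant $C$ in the final tail bound is genuinely absolute.
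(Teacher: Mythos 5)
Your proposal is correct: the exponent bookkeeping works out (with $\effdim\asymp n^\beta$, the choice $t=n^{-\beta/2}(\log n)^{(1+\epsilon)/2}$ gives a tail $\exp(-c(\log n)^{1+\epsilon})$ per term, the $O(n^2)$ union bound and Borel–Cantelli then give exactly the stated bounds), the conditional treatment of $\Uset$ via the norm control already contained in $\EXset$ is sound (indeed $\|\inputcov^{1/2}x_i\|_2^2=x_i^\top\inputcov x_i\leq\opnorm{\inputcov}\|x_i\|_2^2$, so on $\EXset$ no separate Hanson–Wright step is even needed there, and the same trick disposes of your "main obstacle" in the off-diagonal step), and the reduction of $\probsetsphere$ to $\probsetcov$ via $\|x_i\|_2^2=\effdim$ and the ratio $Z_i^\top Z_j/(\|Z_i\|\|Z_j\|)$ is the same reduction the paper uses. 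The technical route differs from the paper's, however. The paper does not use Hanson–Wright or a conditioning-plus-Hoeffding argument for the cross terms; instead it proves a single quadratic-form concentration statement (its Lemma on concentration of quadratic forms, following Lemma A.2 and Corollary A.2 of El Karoui, 2010), obtained from concentration of Lipschitz (convex) functions of vectors with bounded i.i.d.\ entries (Ledoux) and of Gaussian vectors (Milman–Schechtman) applied to $x\mapsto\sqrt{x^\top M_+x/\trace(M_+)}$, and then applies this one lemma uniformly to the diagonal entries, the off-diagonal inner products, and the test point, before the same union bound and Borel–Cantelli step. Your route buys a more modular and arguably more standard argument (two off-the-shelf inequalities, with the two Hanson–Wright regimes making the constant tracking transparent), at the cost of splitting the cross terms into a conditional linear form plus a coefficient-norm event; the paper's route handles all terms with one lemma, treats the bounded-support and Gaussian members of $\probsetw$ through the same convex-Lipschitz concentration mechanism, and is the tool the authors reuse elsewhere (it is the ingredient emphasized in their proof sketch), but it requires the recentering/square-root argument inherited from El Karoui. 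Your residual concern about constants uniform over $\probsetw$ is present in both proofs (the paper's bounded case also carries the support bound $c$ into its constants) and does not affect the statement, since $C$ may depend on the distribution of the entries.
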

The proof of the lemma can be found in Section \ref{sec:technical_prel}.

\begin{proof}[Proof of Theorem \ref{thm:main_thm}]

The proof of the Theorem is primarily separated into two parts
\begin{itemize}
\item We first state Theorem \ref{prop:main_thm_ext} which shows under the weaker
  Assumption~\Ckone~that the results of
  \ref{thm:main_thm} hold for the ridge estimate $\fhatridge$ for non-vanishing $\lambda>0$
  or the ridgeless estimate whenever the
  eigenvalues of $\kermat$ are asymptotically lower bounded.
\item We finish the proof for the ridgeless estimate by invoking Theorem~\ref{prop:main_thm_ext}
  and showing that $\kermat$ indeed has asymptotically lower
  bounded eigenvalues under the stricter assumptions
  \Akone-\Akthree~imposed in Theorem~\ref{thm:main_thm}.
\end{itemize}
For the clarity we denote with \Akthree~the $\beta$-dependent assumptions in Theorem \ref{thm:main_thm}
\begin{itemize}
   \item[\Akthreebracket] \textit{$\beta$-dependent assumptions:} $g_i$ is $(\floor{2/\beta}+1-i)$-times
  continuously differentiable in a neighborhood of $(\cbw,\cbw)$ and there
  exists $\jthresh > \floor{2/\beta}$ such that
  $\rotkerfunc_{\jthresh}(\cbw,\cbw) >0$.
\end{itemize}
We start by introducing the following weaker assumptions that 
allows us to jointly treat $\alpha$-exponential
kernels and kernels satisfying Assumption \Akone-\Akthree~
when the kernel eigenvalues are lower bounded in
Theorem~\ref{prop:main_thm_ext}. Note that this assumption 
implies that the kernel is rotationally invariant.


\begin{itemize}

\item[\Ckonebracket] \textit{Relaxation of Assumption \Akone-\Akthree}: 
    Define the neighborhood $\convergesetres{\delta}{\delta'} \subset \R^d \times \R^d$  as
  \begin{equation*}
    \convergesetres{\delta}{\delta'} := \{(\xk,\xkd) \in  \R^d \times \R^d \mid (\|\xk\|_2^2, \|\xkd\|_2^2) \in [1-\delta, 1+\delta] \times
  [\cbw-\delta, \cbw+\delta], \xk^\top \xkd \in [-\delta',\delta']\}.
  \end{equation*}
  
   The kernel function $\kernf$ is rotationally invariant and there exists a function $\rotkerfunc$ such that $k(\xk.\xkd) = \rotkerfunc(\|\xk\|_2^2, \|\xkd\|_2^2, \xk^\top \xkd)$. Furthermore, $\rotkerfunc$ can be expanded as a power series of the form
  \begin{equation}
    \label{eq:kerneldefgap}
    \kerfunc{\xk}{\xkd}= \rotkerfunc(\|\xk\|_2^2, \|\xkd\|_2^2, \xk^\top \xkd) = \sum_{j=0}^{m} \rotkerfunccoeffj{j}(\|\xk\|_2^2,\|\xkd\|_2^2) (\xk^\top \xkd)^j + (\xk^\top \xkd)^{m+1}  \remainder (\|\xk\|_2^2, \|\xkd\|_2^2, \xk^\top \xkd)
   \end{equation}
   with $\mc= \floor{2/\beta}$ that converges in a neighborhood $\convergeset{\delta,\delta'}$ of the sphere for some $\delta, \delta'>0$ and where $g_i$ is
  $(\floor{2/\beta}+1-i)$-times continuously differentiable in an
   neighborhood of $(\cbw,\cbw)$ and the remainder term $ \remainder $ is a continuous function around the point $(\cbw,\cbw,0)$.

\end{itemize}

\begin{theorem}[Polynomial approximation barrier]
  \label{prop:main_thm_ext}
  Assume that the kernel $\kernf$, respectively its restriction onto the unit sphere, satisfies Qssumption \Ckone~ and that the eigenvalues of $\kermat+\lambda \idmat_n$ are almost surely lower bounded by a positive constant with respect to the sequence of datasets $\Xs$ as $n\to \infty$.
  Furthermore, assume that the ground truth $\ftrue$ is bounded
  and the input distribution satisfies \Adone-\Adtwo.
  Then, for $m =
  2 \floor{2/\beta}$ for $\prob_X \in \probsetcov$ and $m =
  \floor{2/\beta}$ for $\prob_X \in \probsetsphere$, the following results hold for both the ridge~\eqref{eq:krr} and ridgeless estimator~\eqref{eq:constraint_min_p} $\fhatridge$ wiht $\lambda \geq 0$. 
  \begin{enumerate}

  \item The bias of the kernel estimators $\fhatridge$ is
    asymptotically lower bounded, for any $\eps > 0$, 
    \vspace{-0.05in}
  \begin{equation}
    \label{eq:apbiaslb}
    \Bias(\fhatridge) \geq \underset{\polyf \in
      \polyspace{m}}{\inf} \| \ftrue -
    \polyf \|_{\Ell_2(\prob_X)} - \eps  ~~ a.s. ~\mathrm{ as }~ n \to \infty.
  \end{equation}
\item We can find a polynomial $\polyf$ such that for any $\epsilon,\epsilon'>0$, there exists $C>0$ such that asymptotically with probability $\geq 1 - n^2 \exp(-C  (\log(n))^{1+\epsilon'})$ over the draws of $X$, 
\begin{equation}
\label{eq:appolyapprweak}
     \left| \EEobs \fhatridge(X) - \polyf(X) \right| \leq \epsilon  ~~ a.s. ~\mathrm{ as }~ n \to \infty.
\end{equation}
Furthermore, for bounded kernel functions on the support of $\prob_X$ the averaged estimator $\EEobs \fhatridge$ converges in $\Ell_2(\prob_X)$ to a polynomial $\polyf  \in \polyspace{m}$,
  \vspace{-0.1in}
  \begin{equation}
  \label{eq:appolyapproxlp}
    \left\| \EEobs \fhatridge - \polyf \right\|_{\Ell_2(\prob_X)} \to 0  ~~ a.s. ~\mathrm{ as }~ n \to \infty.
  \end{equation}
  \end{enumerate}
  \vspace{-0.1in}
\end{theorem}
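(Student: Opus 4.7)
}
The plan is to write both $\fhatridge$ and its expectation via the representer theorem, approximate each kernel evaluation $\kernf_\bw(\x_i, X)$ pointwise by a polynomial in $X$ of degree $\leq m$, and then show that the resulting polynomial approximation of $\EE_Y \fhatridge$ (a random polynomial in $X$, whose coefficients depend on $\X$ and on $F_i = \ftrue(\x_i)$) is accurate in $\Ell_2(\prob_X)$ up to a term that vanishes almost surely. Together with the polynomial lower bound $\|\EE_Y \fhatridge - \ftrue\|_{\Ell_2(\prob_X)} \geq \inf_{p\in\polyspace{m}}\|p - \ftrue\|_{\Ell_2(\prob_X)} - \|\EE_Y \fhatridge - \text{poly}\|_{\Ell_2(\prob_X)}$, this yields the bias bound~\eqref{eq:apbiaslb}.

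First, I condition on the concentration event of Lemma~\ref{lm:innerpr}, on which $\bigl|\|\x_i\|_2^2/\bw - 1\bigr|$, $\bigl|\|X\|_2^2/\bw-1\bigr|$ and $|\x_i^\top X|/\bw$ are all bounded by $\delta_n := n^{-\beta/2}(\log n)^{(1+\epsilon)/2}$; since the tail bound there is summable in $n$, Borel--Cantelli gives the almost-sure statement. On this event I apply Assumption \Ckone~to expand
\[
\kernf_\bw(\x_i, X) \;=\; \sum_{j=0}^{\mc} g_j\!\bigl(\|\x_i\|_2^2/\bw,\,\|X\|_2^2/\bw\bigr)\,(\x_i^\top X/\bw)^{j} \;+\; R_i^{(1)}(X),
\]
with $\mc = \floor{2/\beta}$ and $|R_i^{(1)}(X)| = O(\delta_n^{\mc+1})$ uniformly in $i$. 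Then I Taylor expand each $g_j(s_1,s_2)$ to total order $\mc - j$ around $(1,1)$: each resulting monomial $(\|\x_i\|_2^2/\bw-1)^a(\|X\|_2^2/\bw-1)^b(\x_i^\top X/\bw)^j$ is a polynomial in $X$ of degree $2b + j$. Maximizing over $a+b \leq \mc - j$ and $0\leq j \leq \mc$ gives degree $\leq 2\mc = m$ in the covariance model case. For $\prob_X \in \probsetsphere$ the factor $\|X\|_2^2/\bw = 1$ is frozen, so only $b=0$ contributes and the degree reduces to $\mc = m$. The Taylor remainder gives another term $R_i^{(2)}(X)$ whose size is controlled by the highest derivative of $g_j$ times $\delta_n^{\mc-j+1}$, again uniformly small.

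Next I combine these expansions: on the good event,
\[
\EE_Y \fhatridge(X) \;=\; \sum_{i=1}^n \alpha_i\,\kernf_\bw(\x_i, X) \;=\; P_\X(X) + \sum_i \alpha_i \bigl(R_i^{(1)}(X) + R_i^{(2)}(X)\bigr),
\]
where $\alpha = (\kermat+\lambda \idmat_n)^{-1} F$ and $P_\X$ is, by the rearrangement above, a polynomial in $X$ of degree $\leq m$ whose coefficients are linear combinations of $\sum_i \alpha_i \prod \text{(scalar factors)}(\x_i)$. Using the eigenvalue lower bound on $\kermat + \lambda \idmat_n$ together with $\|F\|_2 \leq \sqrt{n}\,\|\ftrue\|_\infty$ gives $\|\alpha\|_2 = O(\sqrt{n})$ a.s. Cauchy--Schwarz then bounds the remainder contribution at any fixed $X$ on the good event by $\|\alpha\|_2 \cdot \sqrt{n}\max_i|R_i(X)| = O\bigl(n\,\delta_n^{\mc+1}\bigr)$, and since $(\mc+1)\beta/2 > 1$ this is $o(1)$ a.s. up to logs, giving the pointwise polynomial approximation~\eqref{eq:appolyapprweak}. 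For bounded kernels, the remainder is additionally bounded uniformly in $X$ on the support, and a dominated convergence / split-into-good-and-bad-event argument upgrades the pointwise bound to the $\Ell_2(\prob_X)$ convergence~\eqref{eq:appolyapproxlp}; the bias lower bound~\eqref{eq:apbiaslb} then follows by the triangle inequality since $P_\X \in \polyspace{m}$.

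\paragraph{Main obstacle.} The delicate step is controlling the remainders uniformly over $i$ and over $X$ while $\|\alpha\|_2$ only admits the generic $O(\sqrt n)$ bound: one needs the concentration inequality to be strong enough that $n\,\delta_n^{\mc+1} \to 0$, which is precisely why the cutoff $\mc = \floor{2/\beta}$ is chosen, and why the extra differentiability assumption on $g_j$ in \Ckone~is tied to this same threshold. A secondary subtlety is that for unbounded kernels the control of $\kernf_\bw(\x_i, X)$ outside the concentration event $\Uset$ is lost; this is why statement~\eqref{eq:appolyapprweak} is only pointwise with the quantitative probability from Lemma~\ref{lm:innerpr}, whereas~\eqref{eq:appolyapproxlp} requires bounded kernels so that the bad event contributes $o(1)$ to the $\Ell_2(\prob_X)$ norm.
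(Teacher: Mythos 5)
Your construction of the approximating polynomial is essentially the paper's own: expand the kernel via the power series in Assumption \Ckone, Taylor expand each $g_j$ around $(1,1)$ to total order $\floor{2/\beta}-j$, observe the degree bound $2b+j\le 2\floor{2/\beta}$ (degree $\floor{2/\beta}$ on the sphere), and control the combined remainder through the eigenvalue lower bound on $\kermat+\lambda\idmat_n$ by $n\,\delta_n^{\floor{2/\beta}+1}\to 0$; this also yields the pointwise statement~\eqref{eq:appolyapprweak} exactly as in the paper. The genuine gap is in how you obtain the bias lower bound~\eqref{eq:apbiaslb}. You reduce it, via the triangle inequality in $\Ell_2(\prob_X)$, to $\|\EEobs\fhatridge - P_{\X}\|_{\Ell_2(\prob_X)}\to 0$, which, as you note yourself in your closing paragraph, you can only establish when the kernel is bounded on the support of $\prob_X$, since off the concentration event you have no control of $\kernf_{\bw}(\x_i,X)$. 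But statement 1 of the theorem is claimed \emph{without} the boundedness assumption, and this matters: the exponential inner product kernel and the NTK are unbounded on $\R^d$, which is the support under the covariance model, and the main theorem's bias bound relies on this case. The paper's route avoids the problem by never evaluating the estimator off the good event: it lower bounds $\Bias(\fhatridge)$ by the integral restricted to $\UsetZ$ (dropping the bad region only decreases the bias), uses the uniform closeness of $\EEobs\fhatridge$ to the polynomial there, and then shows only that the \emph{polynomial} and the bounded $\ftrue$ carry negligible $\Ell_2$ mass on $\UsetZ^{\comp}$, after which the infimum over $\polyspace{m}$ is taken.

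Relatedly, your ``dominated convergence / split-into-good-and-bad-event'' step conceals the part that actually requires an argument: $P_{\X}$ has coefficients of order $n$ (through $\|\alpha\|_1$), is unbounded in $X$, and changes with $n$, so there is no $n$-independent dominating function. Showing $\EE\bigl[\indicator_{X\notin\Uset}\,P_{\X}(X)^2\bigr]\to 0$ requires combining the super-polynomially small probability of $\Uset^{\comp}$ with a quantitative tail bound on the norm, as the paper does via $P(\|X\|_2^2/\bw > r)\le \exp(-c\,n^{\beta}r)$ and integration by parts after splitting at $\|X\|_2^2/\bw\le 2$. This estimate is not optional bookkeeping: it is also precisely the ingredient needed to repair your derivation of statement 1 along the paper's restricted-integral route, so it should be carried out explicitly.
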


The proof of this theorem can be found in Section~\ref{sec:prop1_proof}.
%
Theorem~\ref{prop:main_thm_ext} states Theorem~\ref{thm:main_thm}
under the assumption that $(\kermat+\lambda I)$ has asymptotically
lower bounded eigenvalues and the weaker Assumption \Ckone.  For the
proof of Theorem~\ref{thm:main_thm}, it remains to show that
Assumptions \Akone-\Akthree~of Theorem ~\ref{thm:main_thm} and the
$\alpha$-exponential kernel both
\begin{itemize}
\item[(a)] satisfy Assumption \Ckone~ and 
\item[(b)] induce kernel matrices with almost surely asymptotically positive lower bounded eigenvalues
\end{itemize}


Point (a) is relatively simple to prove and deferred to
Section~\ref{sec:AC_lemmas}. The bulk of the work in fact lies in
showing (b) separately for the case for \Akone-\Akthree~ and
$\alpha$-exponential kernels with $\alpha \in (0,2)$ in the following
two propositions, as these two cases require two different proof
techniques. 




\begin{prop}
\label{prop:EVlowerbound}

Assume that the kernel $\kernf$, respectively its restriction onto the unit sphere, satisfies Assumption \Akone-\Akthree~ and the distribution $\prob_X$ satisfies \Bdone-\Bdtwo.
Then, for any $\gamma >0$ and $m = \floor{2/\beta}$, conditioned on $\EXset$, 
\begin{equation}
\label{eq:lambdamin}
    \lambda_{\min}(K) \geq g(\cbw,\cbw,\cbw) - \sum_{i =0}^m g_i(\cbw,\cbw) - \gamma >0
\end{equation}
where $\lambda_{\min}(K)$ is the minimum eigenvalue of the kernel matrix $K$. 
\end{prop}



\begin{prop}
\label{prop:alphaexp}
Assume that the Assumptions B.1-B.2 hold true. Then, the minimum
eigenvalue of the kernel matrix of the $\alpha$-exponential kernel
with $\alpha\in(0,2)$ is lower bounded by some 
positive constant almost surely as $n \to \infty$.  
\end{prop}

The proof of the Propositions~\ref{prop:EVlowerbound} and~\ref{prop:alphaexp} can be found in
the Sections~\ref{sec:proofa1toa3} and~\ref{sec:proofalphaexp} respectively which concludes the proof of
the theorem.
\begin{remark}
 The almost sure statement in Proposition \ref{prop:alphaexp} can also be replaced with an in probability statement as in Lemma \ref{lm:innerpr} and hence also the statements in Theorem \ref{thm:main_thm}.
\end{remark}

\end{proof}

\subsection{Proof of Theorem \ref{prop:main_thm_ext}}
\label{sec:prop1_proof}
As a result of Lemma \ref{lm:innerpr} it is sufficient to condition  throughout the rest of this proof on the intersection of the events $\EXset$ and the event where the eigenvalues of the kernel matrix $\kermat$ are lower bounded by a  positive constant. 

For simplicity of notation, we define $\z_i  = \frac{\x_i}{\sqrt{\bw}}$ and let $\Z$ be the $d\times n$ matrix with column vectors $\z_i$. Define the random variable $Z = X/\sqrt{\bw}$ with $\Xrv \sim \prob_X$ and  denote with $\prob_Z$ the probability distributed of $\Zrv$. Define the event $\UsetZ$ in the same way as $\Uset$ for the normalised inputs $\z_i,\Zrv$ and $\EZset$ like $\EXset$.  In the latter, we denote with $ a \lesssim b$ that there exists a constant $C>0$ such that $a \leq C b$ with $C$ independent of $n,d$.  Furthermore, we make heavily use of the closed form solution for the estimator $\fhatridge$, 
\begin{equation*}
 \EEobs \fhatridge(X) = \fstar(\Xs)^\top(\kermat +\lambda \idmat_n)^{-1}\kvec
\end{equation*}
with $\kvec \in \RR^n$ the vector with entries $(\kvec)_i = \kernf_{\bw}(\x_i,X) = k(\z_i,\Zrv)$ and $\fstar(\Xs)$ the vector with entries $\fstar(\Xs)_i = \fstar(x_i)$. This equation holds true for any $\lambda \geq 0$ and is a well known consequence of the representer theorem.
%

The idea of the proof is to decompose the analysis into the term emerging from the error in the high probability region $\UsetZ$ and the error emerging from the low probability region $\UsetZ^{\comp}$. The proof essentially relies on the following lemma.
\begin{lemma}
\label{lm:lmmainthm}
 We can construct a polynomial $\polf$ of degree $\leq m$ such that for $n \to \infty$, 
 \begin{enumerate}
  \item $\vert \polf(\Zrv) - \EEobs \fhatridge(\sqrt{\bw}\Zrv) \vert \to 0$,  uniformly for all $\Zrv \in \UsetZ$
  \item $\| \indicator_{\Zrv \in \UsetZ^{\comp}} \polf \|_{\Ell_2} \to 0$
 \end{enumerate}
\end{lemma}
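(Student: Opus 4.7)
My plan is to obtain $\polf$ by a two-stage Taylor expansion of $k$ on the high-probability event $\UsetZ$. Starting from Assumption \Ckone,
\begin{equation*}
k(\z_i, \Zrv) = \sum_{j=0}^{m_c} g_j(\|\z_i\|^2, \|\Zrv\|^2) (\z_i^\top \Zrv)^j + (\z_i^\top \Zrv)^{m_c+1} \remainder(\|\z_i\|^2, \|\Zrv\|^2, \z_i^\top \Zrv),
\end{equation*}
with $m_c = \floor{2/\beta}$, I would further Taylor-expand each $g_j(\|\z_i\|^2, \|\Zrv\|^2)$ in its second argument around $1$, truncating at order $m_c - j$; the $(m_c+1-j)$-fold differentiability in \Ckone is exactly what this needs. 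Let $p_i(\Zrv)$ denote the resulting polynomial. Each mixed term $(\|\Zrv\|^2 - 1)^\ell (\z_i^\top \Zrv)^j$ has total degree $2\ell + j \leq 2(m_c - j) + j \leq 2 m_c$ in $\Zrv$, matching $m = 2\floor{2/\beta}$ for $\prob_X \in \probsetcov$; for $\prob_X \in \probsetsphere$ the radius $\|\Zrv\|^2 \equiv 1$ is deterministic so the second expansion disappears and the degree drops to $\floor{2/\beta}$. I then set $\polf(\Zrv) := \fstar(\Xs)^\top (\kermat + \lambda \idmat_n)^{-1} p(\Zrv)$, which is a polynomial in $\Zrv$ of the required degree since the prefactor depends only on the data.

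\textbf{First claim.} Using the closed form $\EEobs \fhatridge(\sqrt{\bw}\Zrv) = \fstar(\Xs)^\top (\kermat + \lambda \idmat_n)^{-1} \kvec$, the crude bound
\begin{equation*}
\left|\polf(\Zrv) - \EEobs \fhatridge(\sqrt{\bw}\Zrv)\right| \leq \|\fstar(\Xs)\|_2 \cdot \|(\kermat + \lambda \idmat_n)^{-1}\|_{\mathrm{op}} \cdot \|p(\Zrv) - \kvec\|_2 \lesssim n \cdot \max_i |p_i(\Zrv) - k(\z_i,\Zrv)|
\end{equation*}
holds by boundedness of $\fstar$ and the assumed asymptotic lower bound on the spectrum of $\kermat + \lambda \idmat_n$. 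On $\UsetZ$ I have both $|\z_i^\top \Zrv|$ and $|\|\Zrv\|^2 - 1|$ of order $\epsilon_n := n^{-\beta/2}(\log n)^{(1+\epsilon)/2}$, so every mixed term in the Taylor remainder for $g_j$ contributes $\epsilon_n^{m_c-j+1}\epsilon_n^j = \epsilon_n^{m_c+1}$, and the $\remainder$-remainder likewise gives $\epsilon_n^{m_c+1}$. Since $(m_c + 1)\beta/2 > 1$, we have $\epsilon_n^{m_c+1} = o(n^{-1})$ up to polylogarithmic factors, so the RHS above tends to $0$ uniformly in $\Zrv \in \UsetZ$, establishing the first claim.

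\textbf{Second claim.} By Cauchy--Schwarz,
\begin{equation*}
\|\indicator_{\Zrv \in \UsetZ^{\comp}} \polf\|_{\Ell_2}^2 \leq \sqrt{\prob(\UsetZ^{\comp})} \cdot \|\polf\|_{\Ell_4}^2.
\end{equation*}
Lemma \ref{lm:innerpr} gives $\sqrt{\prob(\UsetZ^{\comp})} \lesssim \exp(-C(\log n)^{1+\epsilon}/2)$, which is subpolynomial. It remains to check that $\|\polf\|_{\Ell_4}$ grows only polynomially in $n$: under \Adone~the entries of $\Zrv$ are sub-Gaussian or have bounded support, so every fixed-degree polynomial has finite $\Ell_4$ norm controlled by its coefficients; these coefficients in turn are controlled by $\|\z_i\|_2 = O(1)$ on $\EZset$ and local boundedness of the $g_j$'s derivatives, so $\|p_i\|_{\Ell_4}$ is uniformly bounded in $i$. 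Combining with $\|\fstar(\Xs)\|_2 \leq \sqrt{n}\|\fstar\|_\infty$ and the operator-norm bound gives $\|\polf\|_{\Ell_4} \lesssim \mathrm{poly}(n)$, which is crushed by the superpolynomial probability decay.

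\textbf{Main obstacle.} The delicate step is the degree-bookkeeping in the nested expansion: the two-stage Taylor expansion must simultaneously respect the degree ceiling $2m_c$ and produce a remainder uniformly $o(n^{-1})$, and this works only because the differentiability hypothesis on $g_j$ in \Ckone is calibrated so that the mixed remainders $(\z_i^\top\Zrv)^j(\|\Zrv\|^2-1)^{m_c-j+1}$ all collapse to the same critical scale $\epsilon_n^{m_c+1}$. The sphere case $\probsetsphere$ is genuinely easier because $\|\Zrv\|^2$ is not random, so only the first expansion is needed and the degree halves.
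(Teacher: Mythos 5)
Your proposal follows essentially the same route as the paper's proof: you build $\polf$ from the \Ckone~expansion plus a Taylor expansion of the coefficient functions $g_j$ near the concentration point, define $\polf$ via the representer-theorem closed form $\fstar(\Xs)^\top(\kermat+\lambda \idmat_n)^{-1}p(\Zrv)$, control the on-event error by $n\max_i$ of a remainder of order $\bigl(n^{-\beta/2}(\log n)^{(1+\epsilon)/2}\bigr)^{\lfloor 2/\beta\rfloor+1}$ using Lemma~\ref{lm:innerpr} and the strict inequality $(\lfloor 2/\beta\rfloor+1)\beta/2>1$, and handle the off-event $\Ell_2$ mass by combining the superpolynomially small probability of $\UsetZ^{\comp}$ with moment control of the fixed-degree polynomial. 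The only deviations from the paper — expanding $g_j$ in its second argument only rather than jointly around $(1,1)$, and using Cauchy--Schwarz with an $\Ell_4$ bound in place of the paper's truncation at $\|\Zrv\|_2^2\leq 2$ plus the tail bound and integration by parts — are cosmetic variants that are equally valid under Assumptions \Ckone~and \Adone.
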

The proof of the lemma can be found in Section \ref{sec:prooflmmain}.
As a result, Equation~\ref{eq:appolyapprweak} follows immediately and Equation~\ref{eq:appolyapproxlp} is a consequence of
\begin{equation*}
\begin{split}
  \left\| \EEobs \fhatridge - \polyf \right\|_{\Ell_2(\prob_Z)}^{2} 
 \leq \EE_{\Zrv} \indicator_{\Zrv \in \UsetZ}(\fubias(\sqrt{\bw}\Zrv) - \polyf_z(\Zrv))^2 +  \EE_{\Zrv} \indicator_{\Zrv \notin \UsetZ}(\polyf_z(\Zrv))^2+ \EE_{\Zrv} \indicator_{\Zrv \notin \UsetZ}(\fubias(\sqrt{\bw}\Zrv))^2  
\end{split}
\end{equation*}
The first two terms vanish due to Lemma \ref{lm:lmmainthm}. To see that the third term vanishes, note that for $n$ sufficiently large, 
\begin{equation*}
\begin{split}
&\EE_{\Zrv} \indicator_{\Zrv \notin \UsetZ}(\fubias(\sqrt{\bw}\Zrv))^2 =  \EE_{\Zrv} \indicator_{\Zrv \notin \UsetZ}( y^\top(\kermat +\lambda \idmat_n)^{-1}\kvec)^2 \\ \lesssim &\frac{n^2}{c_{\lambda_{\min}}^2} \EE_{\Zrv} \indicator_{\Zrv \notin \UsetZ}  \max_i \vert \kernf(\z_i, \Zrv)\vert^2 \lesssim n^2 P(\UsetZ^{\comp}) \to 0
\end{split}
\end{equation*}
where we have used in the first inequality that by assumption $\vert \ftrue \vert$ is bounded on the support of $\prob_X$ and that $\lambda_{\min}(K + \lambda I_n)\geq \minEV >0  $ and in the second inequality that $\vert \kernf \vert$ is bounded.  Finally, the convergence to zero is due Lemma \ref{lm:innerpr}.

Next, the lower bound for the bias.
Due to Lemma \ref{lm:alphaexpbound}, we have that
\begin{equation*}
 \underset{\Zrv \in \UsetZ}{\max} ~ \left| \polf(\Zrv) - \EEobs\fhatridge(\sqrt{\bw}\Zrv) \right| = \underset{\Zrv \in \UsetZ}{\max} ~ \left| \polf(\Zrv) -\fstar(\sqrt{\bw}\Z)^\top (K + \lambda \In)^{-1} \kvec \right| \to 0  ,
\end{equation*}
and hence, for any $\gamma_1 >0 $ and $n$ sufficiently large, 
\begin{equation*}
   \EE_{\Zrv} \indicator_{\Zrv \in \UsetZ} \left( \ftrue(\sqrt{\bw}\Z)^\top (K + \lambda \In)^{-1} \kvec - \ftrue(\sqrt{\bw}\Zrv) \right)^2  \geq \EE_{\Zrv}\indicator_{\Zrv \in \UsetZ} \left( \polf(\Zrv) - \ftrue(\sqrt{\bw}\Zrv) \right)^2   - \gamma_1  ,
\end{equation*}
Furthermore,  due to the second statement in Lemma \ref{lm:alphaexpbound}, we know that $\EE_{\Zrv}\indicator_{\Zrv \notin \UsetZ} (\polf(\Zrv))^2 \to 0 $ and because $\ftrue$ is bounded by assumption, we can see that $\EE_{\Zrv}\indicator_{\Zrv \notin \UsetZ} (\polf(\Zrv) - \ftrue(\sqrt{\bw}\Zrv))^2 \to 0 $. Since $\fhatridge$ only depends linearly on the observations $\y$, we have $\Bias(\fhat) =  \EE_{\Zrv}( \ftrue(\sqrt{\bw}\Z)^\top (K + \lambda \In)^{-1} \kvec - \fstar( \sqrt{\bw} \Zrv) )^2$. Thus, as a result, for any $\gamma_2 >0$, 
\begin{equation*}
    \begin{split}
        \Bias(\fhat) 
        &\geq \EE_{\Zrv}\indicator_{\Zrv \in \UsetZ} \left( \polf(\Zrv) - \ftrue(\sqrt{\bw}\Zrv) \right)^2 - \gamma_1\\
        &\geq \EE_{\Zrv}\indicator_{\Zrv \in \UsetZ} \left( \polf(\Zrv) - \ftrue(\sqrt{\bw}\Zrv) \right)^2  
+ \EE_{\Zrv}\indicator_{ \Zrv \notin \UsetZ} \left( \polf(\Zrv) - \ftrue(\sqrt{\bw}\Zrv) \right)^2 - \gamma_1 - \gamma_2\\
        &= \EE_{\Zrv} \left( \polf(\Zrv) - \ftrue(\sqrt{\bw}\Zrv) \right)^2  -\gamma_1 - \gamma_2 \quad a.s.
    \end{split}
\end{equation*}
Thus, the result follows from the definition of the infimum. 
\qed

%


\subsection{Proofs for the lower bound of the eigenvalues}
\label{sec:PropB1}

\subsubsection{Proof of Proposition \ref{prop:EVlowerbound}}
\label{sec:proofa1toa3}
We use the same notaiton as used in the proof of Theorem \ref{prop:main_thm_ext}. 
As a result of Lemma \ref{lm:innerpr} it is sufficent to condition on $\EXset$ throughout the rest of this proof. 
The proof follows straight forwardly from the following Lemma \ref{lm:elkarouiextension} which gives an asymptotic description of the kernel matrix $K$ based on a similar analysis as the one used in the proof of Theorem~2.1 and 2.2 in the paper \cite{ElKaroui10}. In essence, it is again a consequence of the concentration inequality from Lemma \ref{lm:innerpr} and the stronger Assumption \Akone-\Akthree~and in particular the power series expansion of $g$. We denote with $\circ i$ the $i$-times Hadamard product.

\begin{lemma}
\label{lm:elkarouiextension}
Given that the assumption in Proposition \ref{prop:EVlowerbound} hold. 
For $m = \floor{2/\beta}$,
  \begin{equation*}
    \opnorm{ K - M} \to 0 ~ 
\end{equation*}  
with
\begin{equation}
\label{eq:elkarouiM}
\begin{split}
    \mmat&= \idmat\left(g(\cbw,\cbw,\cbw)  - \sum_{q =0}^m g_q(\cbw,\cbw) \right) +  \sum_{q=0}^m (\Z^\top\Z)^{\circ q} \circ \kermatg_{g_q},
\end{split}
\end{equation}
where $\kermatg_{g_q}$ is the positive semi-definite matrix with entries $(\kermatg_{g_q})_{i,j} = g_l(\| z_i \|_2^2, \| z_j\|_2^2)$.
\end{lemma}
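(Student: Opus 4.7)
The plan is to follow El Karoui's strategy by decomposing $K - M$ into its diagonal and off-diagonal parts and bounding each separately using the concentration event $\EXset$ of Lemma~\ref{lm:innerpr}. On this event one has $\max_{i,j}|z_i^\top z_j - \cbw\delta_{ij}| \leq \cbw\tau$ with $\tau := n^{-\beta/2}\log(n)^{(1+\epsilon)/2}$; since $\EXset$ holds almost surely as $n \to \infty$, it suffices to produce a deterministic bound on this event.

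For off-diagonal pairs $i \neq j$, the triple $(\|z_i\|_2^2, \|z_j\|_2^2, z_i^\top z_j)$ eventually lies in the convergence region $\convergesetres{\delta}{\delta'}$ of Assumption~\Ckone, so its power-series representation gives
\begin{equation*}
K_{ij} = \sum_{q=0}^m g_q(\|z_i\|_2^2, \|z_j\|_2^2)(z_i^\top z_j)^q + (z_i^\top z_j)^{m+1}\, \remainder(\|z_i\|_2^2, \|z_j\|_2^2, z_i^\top z_j).
\end{equation*}
By construction of $M$ the first sum is exactly the off-diagonal entry of $\sum_{q=0}^m (Z^\top Z)^{\circ q} \circ K_{g_q}$, and the identity correction contributes nothing off the diagonal, so $(K - M)_{ij}$ reduces to the remainder alone. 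Continuity of $\remainder$ at $(\cbw,\cbw,0)$ together with the uniform concentration of its arguments on $\EXset$ yields $\sup_{i \neq j}|\remainder(\cdots)| \leq C$ for some constant $C > 0$, whence $\max_{i \neq j}|(K - M)_{ij}| \leq C\tau^{m+1}$.

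The diagonal entries need a separate argument since $z_i^\top z_i \approx \cbw$ may lie outside the inner-product neighborhood where the series converges. I use continuity of $\rotkerfunc$ directly: $K_{ii} = \rotkerfunc(\|z_i\|_2^2, \|z_i\|_2^2, \|z_i\|_2^2) \to \rotkerfunc(\cbw,\cbw,\cbw)$ uniformly in $i$. Meanwhile, the diagonal of $M$ is $\rotkerfunc(\cbw,\cbw,\cbw) - \sum_{q=0}^m g_q(\cbw,\cbw) + \sum_{q=0}^m (\|z_i\|_2^2)^q g_q(\|z_i\|_2^2,\|z_i\|_2^2)$. Continuity of each $g_q$ at $(\cbw,\cbw)$ (provided by \Ckone) combined with $(\|z_i\|_2^2)^q \to \cbw^q$ shows the last sum tends uniformly to $\sum_{q=0}^m g_q(\cbw,\cbw)$ (the case of interest being the standard scaling $\cbw = 1$), and the identity correction exactly cancels to give $\max_i|(K-M)_{ii}| = o(1)$.

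Finally, I combine the two bounds through the row-sum operator-norm estimate
\begin{equation*}
\opnorm{K - M} \leq \max_i |(K-M)_{ii}| + (n-1)\max_{i \neq j} |(K-M)_{ij}| \leq o(1) + C\, n^{\,1 - (m+1)\beta/2}(\log n)^{(m+1)(1+\epsilon)/2}.
\end{equation*}
The choice $m = \floor{2/\beta}$ forces $m+1 > 2/\beta$, so the exponent of $n$ is strictly negative and the bound vanishes as $n \to \infty$. The main technical obstacle is ensuring the remainder bound on $\remainder$ is uniform across all $n^2$ off-diagonal pairs; this is resolved by observing that on $\EXset$ the triples $(\|z_i\|_2^2, \|z_j\|_2^2, z_i^\top z_j)$ eventually lie in any fixed compact neighborhood of $(\cbw,\cbw,0)$, on which $\remainder$ is bounded by continuity.
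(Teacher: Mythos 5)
Your proposal is correct and follows essentially the same route as the paper's proof: the same diagonal/off-diagonal split, the same series-plus-remainder bound of order $n\,\tau^{m+1}\to 0$ for the off-diagonal block (with $\tau = n^{-\beta/2}(\log n)^{(1+\epsilon)/2}$ and $m=\lfloor 2/\beta\rfloor$), and the same continuity argument for the diagonal, where your appeal to ``continuity of $\rotkerfunc$ along the diagonal'' is exactly what Assumption \Aktwo~(restricted Lipschitz continuity) supplies, since the power series need not converge at inner products near $1$. The only cosmetic differences are your use of the symmetric row-sum operator-norm bound in place of the paper's Frobenius-norm bound, and qualitative continuity statements in place of the paper's explicit Lipschitz and induction estimates for the diagonal terms.
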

The proof of the lemma can be found in Section \ref{sec:proofev}.
The proof of Proposition \ref{prop:EVlowerbound} then follows straight forwardly when using Schur's product theorem which shows that 
\begin{equation*}
 \idmat\left(g(\cbw,\cbw,\cbw)  - \sum_{q =0}^m g_q(\cbw,\cbw) \right) +  \sum_{q=0}^m (\Z^\top\Z)^{\circ q} \circ \kermatg_{g_q} \succeq \idmat\left(g(\cbw,\cbw,\cbw)  - \sum_{q =0}^m g_q(\cbw,\cbw) \right)
\end{equation*}
where we use that $g_i$ is positive semi-definite by Assumption \Akone. 
To see that the eigenvalues are lower bounded, we thus simply need to show that $g(\cbw,\cbw,\cbw)  - \sum_{q =0}^m g_q(\cbw,\cbw) > 0$. This holds because the positive semi-definiteness of $g_q$ implies that $g_q(\cbw,\cbw) \geq0$ and hence $g(\cbw,\cbw,\cbw) = \sum_{q=0}^{\infty} g_q(\cbw,\cbw)$ is a sum of positive coefficients and because by Assumption \Akthree~there exists $\jthresh > \floor{2/\beta}$ such that $g_{\jthresh}(\cbw,\cbw) >0$. Hence, there exists a positive constant $c>0$ such that $\lambda_{\min}(M) \geq c$. We can conclude the proof when applying Lemma \ref{lm:elkarouiextension}, which implies that $\lambda_{\min}(\kermat) \to \lambda_{\min}(M) $ as $n\to \infty$. 
 \qed

\subsubsection{Proof of Proposition \ref{prop:alphaexp}}
\label{sec:proofalphaexp}
We use the same notaiton as used in the proof of Theorem \ref{prop:main_thm_ext} and define $\Dalpha$ to be the $n\times n$ matrix with entries $(\Dalpha)_{i,j} = d_{\alpha}(z_i, z_j) := \| z_i - z_j \|_2^{\alpha}$. We separate the proof into two steps. In a first step, we decompose $\kernf$ in the terms
\begin{equation*}
 \exp(-\lvert\lvert \xk-\xkd\rvert\rvert_2^{\alpha}) =
\exp(\tilde{k}(\xk,\xkd)) \exp(-\lvert\lvert
\xk-\xkd\rvert\rvert_2^{\alpha} - \tilde{k}(\xk,\xkd)) 
\end{equation*}
such that $\exp(-\lvert\lvert \xk-\xkd\rvert\rvert_2^{\alpha} -
\tilde{k}(\xk,\xkd))$ and $\exp(\tilde{k}(\xk,\xkd))$ are both positive semi-definite kernel functions. In particular, we construct $k$ such that the eigenvalues of the kernel matrix $A$ of  $\exp(\tilde{k}(\xk,\xkd))$ evaluated at $\Zs$ are almost surely lower bounded by a positive constant. The proposition is then a straight forward consequence and shown in the last step.
\\\\
\textbf{Step 1:} 
We can see from Chapter~3
Theorem~2.2 in \cite{Berg84} that $d_{\alpha}$ is a conditionally negative semi-definite function, that is that for any  $m\in \NN\setminus 0$ and any $\{x_1,...,\x_m\}$, the corresponding kernel matrix $A$ is conditionally negative semi-definite. A matrix $A$ is conditionally negative semi-definite if for every $v \in \mathbb{R}^n$ with $1^\top v =
0$, $v^\top A v \leq 0$.
As shown in Chapter~3 Lemma~2.1
in \cite{Berg84}, a kernel function $\phi(\xk,\xkd)$ is conditionally
negative semi-definite, if and only if for any $z_0$, $(\xk,\xkd) \to  \phi(\xk,\z_0) +
\phi(\z_0,\xkd) -\phi(\xk,\xkd) - \phi(\z_0,\z_0)$
is a positive semi-definite
function. Hence, for any $\z_0\in \RR^n$, the kernel defined by 
\begin{equation}
 \tilde{\kernf}(\xk,\xkd) =   d_{\alpha}(\xk,\z_0) +
d_{\alpha}(\z_0,\xkd) -d_{\alpha}(\xk,\xkd) - d_{\alpha}(\z_0,\z_0)
\end{equation}
 is positive semi-definite. 
%
%
%
The goal is now to show that we can find a vector $z_0$ such that the kernel matrix $A$ of $\tilde{\kernf}$ evaluated at $\Zs$ has eigenvalues almost surely lower bounded by some  positive constant. Essentially, the statement is a consequence of the following lemma, bounding the eigenvalues of $\Dalpha$.
\begin{lemma}
\label{lm:alphaexpbound}
Assume that $\prob_X$ satisfies the Assumption \Bdone-\Bdtwo. Conditioned on $\EXset$, for any $n$ sufficiently large, all eigenvalues of the matrix $\Dalpha$ are bounded away from zero by a positive  constant $c>0$, i.e. $\underset{i \leq n}{\min} \left|\lambda_i(\Dalpha) \right| \geq c$. 
\end{lemma}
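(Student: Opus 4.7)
The strategy is to show that $\Dalpha$ differs from an explicit target matrix by a perturbation that, although not small in operator norm directly, has structure that allows eigenvalue control via interlacing. The natural target is $M_\alpha := 2^{\alpha/2}(\mathbf{1}\mathbf{1}^\top - \idmat_n)$, whose eigenvalues are $(n-1)2^{\alpha/2}$ along $\mathbf{1}$ and $-2^{\alpha/2}$ with multiplicity $n-1$ on $\mathbf{1}^\perp$, so that $\min_i|\lambda_i(M_\alpha)| = 2^{\alpha/2}$.

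\textbf{Step 1 (Concentration).} On $\EXset$, Lemma~\ref{lm:innerpr} gives $|\|z_i\|_2^2 - 1| \leq \eta_n$ and $|z_i^\top z_j| \leq \eta_n$ for $i \neq j$, with $\eta_n := n^{-\beta/2}(\log n)^{(1+\epsilon)/2} \to 0$. Hence $\|z_i - z_j\|_2^2 = 2 + F_{ij}$ with $F_{ij} := \xi_i + \xi_j - 2 z_i^\top z_j$, $\xi_i := \|z_i\|_2^2 - 1$, and $|F_{ij}| = O(\eta_n)$ off-diagonal.

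\textbf{Step 2 (Taylor expansion).} Since $\alpha/2 \in (0,1)$, the map $t \mapsto t^{\alpha/2}$ is real-analytic around $t = 2$. Truncating at order $m_0 := \lceil 2/\beta \rceil$, I write
\[
 \Dalpha \;=\; M_\alpha \;+\; \sum_{k=1}^{m_0} a_k \, \widetilde{F^{\circ k}} \;+\; R,
\]
where $\widetilde{F^{\circ k}}$ agrees with the entrywise $k$-th power of $F$ off-diagonal and is zero on the diagonal, and $R$ is the Taylor remainder (also zero-diagonal). The entrywise bound $|R_{ij}| = O(\eta_n^{m_0+1})$ yields $\opnorm{R} \leq n \eta_n^{m_0+1} = o(1)$ by the choice of $m_0$.

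\textbf{Step 3 (Hadamard-power terms).} Each $\widetilde{F^{\circ k}}$ decomposes by the multinomial theorem into a finite sum of Hadamard products of rank-one outer products $\xi^{\circ s}(\xi^{\circ t})^\top$ with Hadamard powers $(\Z^\top \Z)^{\circ r}$, where $s + t + r = k$. Following the El Karoui-style bookkeeping of Lemma~\ref{lm:elkarouiextension}, these pieces separate into (i) contributions whose operator norm is $o(1)$ directly (Hadamard powers of $\Z^\top\Z$ of order $r > 2/\beta$ concentrate entrywise to $\idmat_n$), and (ii) low-rank pieces whose range lies in a subspace of dimension $O(d)$.

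\textbf{Step 4 (Weyl conclusion).} By Weyl's inequality plus Cauchy interlacing, ignoring the $O(d)$ directions affected by the low-rank pieces leaves at least $n - O(d)$ eigenvalues of $\Dalpha|_{\mathbf{1}^\perp}$ within $o(1)$ of $-2^{\alpha/2}$, hence bounded away from zero in absolute value. For the remaining $O(d)$ eigenvalues, the conditional negative definiteness of $\|\cdot\|_2^\alpha$ (Chapter~3 in \cite{Berg84}) gives $v^\top \Dalpha v \leq 0$ on $\mathbf{1}^\perp$; strict negativity at the scale $-c$ can be upgraded via the Bernstein--Schoenberg representation $t^{\alpha/2} = c_\alpha \int_0^\infty (1 - e^{-st}) s^{-1-\alpha/2}\,ds$, which rewrites $v^\top \Dalpha v = -c_\alpha \int_0^\infty v^\top G_s v \cdot s^{-1-\alpha/2}\,ds$ for $G_s$ the positive-definite Gaussian kernel matrix, allowing a uniform lower bound on the integrand over an $n$-independent window of $s$. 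Finally, the Perron-like top eigenvalue satisfies $\mathbf{1}^\top \Dalpha \mathbf{1} = \Theta(n^2)$, so it is $\Theta(n)$ and trivially bounded away from zero. Combining, $\min_i |\lambda_i(\Dalpha)| \geq c > 0$ on $\EXset$, and Lemma~\ref{lm:innerpr} lifts this to the almost-sure conclusion.

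\textbf{Main obstacle.} The crux is Step~3. The naive entrywise-to-operator bound $\opnorm{\widetilde{F^{\circ k}}} \leq n\eta_n^k$ fails to be $o(1)$ for $k \leq 2/\beta$, because the linear piece $\widetilde F = \xi\mathbf{1}^\top + \mathbf{1}\xi^\top - 2\Z^\top\Z + 2\idmat_n$ can carry operator norm as large as $\opnorm{\Z^\top\Z} \asymp n/d$. The resolution is structural and exactly mirrors the Hadamard-expansion machinery already used in Lemma~\ref{lm:elkarouiextension}: the large-norm pieces are low-rank (rank $O(d)$) and therefore affect only $O(d)$ of the $n - 1$ eigenvalues of $\Dalpha|_{\mathbf{1}^\perp}$, and the remaining eigenvalues can be pinned near $-2^{\alpha/2}$ by an interlacing argument. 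The technical input beyond Lemma~\ref{lm:elkarouiextension} is only the substitution of the coefficients $a_k$ obtained from the expansion of $t^{\alpha/2}$ for those arising from $\rotkerfunc$ in the kernel case.
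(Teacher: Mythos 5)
The decisive gap is in Steps 3--4, i.e.\ exactly at the point you flag as the ``main obstacle''. The pieces you propose to discard by interlacing are not low-rank in the relevant sense: with $\xi_i:=\|z_i\|_2^2-1$, the first-order term of your expansion is $a_1\widetilde F=a_1(\xi\mathbf{1}^\top+\mathbf{1}\xi^\top)+2a_1(\idmat_n-\Z^\top\Z)$, and $2a_1(\idmat_n-\Z^\top\Z)$ is an order-one multiple of the identity minus a PSD matrix of rank $\min(d,n)$. The assumptions \Adone-\Adtwo~control only $\effdim=\trace(\inputcov)$, not $d$, so $d$ may be of order $n$ or larger and ``ignoring $O(d)$ directions'' can mean ignoring all of them; and even in the isotropic case $d=\effdim\ll n$, any $v\in\ker\Z\cap\mathbf{1}^\perp$ already receives an order-one shift $+2a_1\|v\|_2^2$, so the claim that $n-O(d)$ eigenvalues lie within $o(1)$ of $-2^{\alpha/2}$ is false. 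Moreover the retained Hadamard powers $(\Z^\top\Z)^{\circ k}$ with $k\le 2/\beta$ are not entrywise-negligible, and unlike in Lemma~\ref{lm:elkarouiextension} -- where every retained term comes with a PSD coefficient kernel and can be dropped by Schur's product theorem -- the Taylor coefficients $a_k=\binom{\alpha/2}{k}2^{\alpha/2-k}$ of $t\mapsto t^{\alpha/2}$ alternate in sign, so there is no sign-definiteness to exploit. This is precisely why a direct expansion of $\|z_i-z_j\|_2^\alpha$ does not reduce to the machinery of Lemma~\ref{lm:elkarouiextension}, and why the paper treats the $\alpha$-exponential case separately.

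The fallback you invoke for ``the remaining $O(d)$ eigenvalues'' is in fact the entire proof, but its key ingredient is asserted rather than established. Writing $\|z_i-z_j\|_2^\alpha=c_\alpha\int_0^\infty\bigl(1-e^{-x^2\|z_i-z_j\|_2^2}\bigr)x^{-1-\alpha}\,dx$ gives, for $\mu\perp\mathbf{1}$, $\mu^\top\Dalpha\mu=-c_\alpha\int_0^\infty x^{-1-\alpha}\,\mu^\top G_x\mu\,dx$ with $G_x$ the Gaussian kernel matrix at bandwidth $x$; since every integrand is nonnegative one may restrict to a fixed window of $x$, but the needed bound $\mu^\top G_x\mu\ge c\|\mu\|_2^2$ with $c$ \emph{independent of $n$} does not follow from strict positive definiteness of the Gaussian kernel, which is compatible with $\lambda_{\min}(G_x)\to0$. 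It must be supplied by Proposition~\ref{prop:EVlowerbound} (via Lemma~\ref{lm:elkarouiextension}) applied to the Gaussian kernel, which satisfies \Akone-\Akthree~because $\exp(-x^2\|z-z'\|_2^2)=\sum_j\frac{(2x^2)^j}{j!}(z^\top z')^j e^{-x^2\|z\|_2^2-x^2\|z'\|_2^2}$ has nonnegative PSD coefficient kernels with $g_j(1,1)>0$ for every $j$. Once that step is made explicit, the lemma follows with no need for Steps 2--3: the quadratic form is $\le-\tilde c\|\mu\|_2^2$ on the $(n-1)$-dimensional subspace $\mathbf{1}^\perp$, Courant--Fischer gives $n-1$ eigenvalues $\le-\tilde c$, and $\trace(\Dalpha)=0$ forces the remaining eigenvalue to be at least $(n-1)\tilde c$. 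This is the paper's argument; your Taylor-plus-interlacing scaffolding is both insufficient as stated and unnecessary once the Gaussian eigenvalue bound is in place.
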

The proof of the lemma can be found in Section \ref{sec:proofalpha}.
In particular, note that we can use the same argument as used in Lemma \ref{lm:innerpr} to show that there exists almost surely over the draws of $\Zs$ as $n \to \infty$ an additional vector $z_0$, such that for any two vectors $\zk,\zkd \in \Zs \cup\{\z_0\}$, 
\begin{equation}
\label{eq:tmpconineq}
    | \zk^\top \zkd - \delta_{\zk = \zkd} | \lesssim n^{-\beta/2} (\log(n))^{(1+\epsilon)/2}.
\end{equation}
Throughout the rest of this proof, we conditioned on the event $\EXset$ and the additional event that Equation \eqref{eq:tmpconineq} holds, and remark that the intersection of these two events holds true almost surely as $n\to \infty$. It is then straight forward to show that the eigenvalues of the matrix
\begin{equation*}
 \Dalpha(\Zs,\z_0) = \begin{pmatrix} 
     \Dalpha & d_{\alpha}(\Zs, \z_0) \\ 
     d_{\alpha}(\Zs, \z_0)^\top & d_{\alpha}(\z_0,\z_0) 
     \end{pmatrix}
\end{equation*}
are also bounded away from zero by a positive constant $\tilde{c}>0$. 
 Therefore, for any $v \in \RR^n$, 
     \begin{equation}
     \label{eq:alpha1}
   \begin{pmatrix} v^{T} &  -1^\top  v\end{pmatrix}   \Dalpha(\Zs,\z_0)\begin{pmatrix} v^{T} \\  -1^\top v \end{pmatrix} \leq -\tilde{c} \| \begin{pmatrix} v^{T} \\  1^\top v \end{pmatrix}\|_2^2,
\end{equation}
where we have used that $1^T \begin{pmatrix} v^{T} \\  -1^\top v \end{pmatrix} = 0$. As a result, we can see that
\begin{equation}
\label{eq:alpha2}
\begin{split}
      &\begin{pmatrix} v^{T} &  -1^\top  v\end{pmatrix}   \Dalpha(\Zs,\z_0)
      \begin{pmatrix} v^{T} \\  -1^\top v \end{pmatrix} \\  
                = &v^\top  \Dalpha v - v^\top  \left[\frac{1}{n} 1 1^\top d_{\alpha}(\z_0, \Zs) \right] v - v^\top  \left[ \frac{1}{n} d_{\alpha}(\z_0, \Zs)^\top  1 1^\top  \right] v  + v^\top  \left[ \frac{1}{n^2} 1 1^\top  d_{\alpha}(\z_0,\z_0) 1 1^\top  \right] v. \\ 
                = & v^\top \underbrace{\left[  \Dalpha  -  \frac{1}{n} 1 1^\top d_{\alpha}(\z_0, \Zs)   -    \frac{1}{n} d_{\alpha}(\z_0, \Zs)^\top  1 1^\top     +    \frac{1}{n^2} 1 1^\top  d_{\alpha}(\z_0,\z_0) 1 1^\top   \right]}_{= -A} v,
\end{split}
\end{equation}
where $A$ is exactly the kernel matrix of $\tilde{k}$ evaluated at $\Zs$. 
Hence, combining Equation \eqref{eq:alpha1} and \eqref{eq:alpha2} gives
\begin{equation*}
 v^T A v \geq \tilde{c}~ (v^Tv + v^T11^Tv) \geq  \tilde{c} ~v^Tv.
\end{equation*}
We can conclude the first step of the proof when applying the Courant–Fischer–Weyl min-max principle which shows that $A$ has lower bounded eigenvalues $\geq \tilde{c}$. \\\\
\textbf{Step 2:}
We can write $\exp(-d_{\alpha}(x,\xkd)) = \exp(\tilde{k}(x,\xkd)) \exp(\phi(\xk,\xkd)) $ with $\phi(x,\xkd) := -d_{\alpha}(x,\xkd) - \tilde{k}(x,\xkd) =  d_{\alpha}(\z_0,\z_0)    -d_{\alpha}(x,z_0)  -d_{\alpha}(z_0,\xkd) $. It is straight forward to verify that $\exp(\phi(x,\xkd))$ is a positive semi-definite function. Hence, due to Schur's product theorem, the following sum is a sum of positive semi-definite functions
\begin{equation*}
    \exp(-d_{\alpha}(\xk,\xkd)) = \exp(\tilde{k}(\xk,\xkd)) \exp(\phi(x,\xkd)) = \sum_{l=0}^{\infty} \frac{1}{l!} \tilde{k}(x,\xkd)^l \exp(\phi(x,\xkd)).
\end{equation*} 
 It is sufficient to show that the eigenvalues of the kernel matrix $M$ of $\tilde{k}(x,\xkd) \exp(\phi(x,\xkd))$, evaluated at $\Zs$, are lower bounded. Let $B$ be the kernel matrix of $\exp(\phi(x,\xkd))$ evaluated at $\Zs$, we have that $M = A \circ B$, where $\circ$ is the Hadamard product and $A$ the kernel matrix of $\tilde{k}$ from the previous step. 
 We make the following claim from which the proof follows trivially using the fact shown in the first step that the eigenvalues of $A$ are lower bounded by a positive  constant.
\\\\
\textbf{Claim:} \textit{$B = \frac{1}{2e^2} 11^\top  + \tilde{B}$ with $\tilde{B}$ a positive semi-definite matrix.}
\\\\
\textbf{Proof of the claim:}
 Let $\psi$ be the vector with entries $\psi_i = \exp(-  d_{\alpha}(\z_i,\z_0))$. Furthermore, let ${\gamma = \exp(d_{\alpha}(\z_0, \z_0))}$. We can write
\begin{equation*}
    B = \gamma \left(1 \psi^\top \right) \circ \left( \psi 1^\top  \right) = \gamma \psi \psi^\top .
\end{equation*}
Next, using Lemma \ref{lm:innerpr} and the fact that
 $d_{\alpha}(x,\xkd) =  2^{\alpha/2} + O(\frac{\lvert\lvert x-\xkd\rvert\rvert_2^2}{2} -1 )$, we can see that $ {\gamma \geq \exp( 2^{\alpha/2}/2) >1}  $. Hence, it is sufficient to show that $  \psi \psi^\top  - \frac{1}{2e^{2}} ~ 1 1^\top  $ is positive semi-definite. This is true if and only if $1^\top  \psi \psi^\top  1 \geq \frac{1}{2e^{2}} 1^\top 11^\top  1$, which is equivalent to saying that $ \left( \sum_{i=1}^{n} \exp(-d_{\alpha}(\z_i, \z_0)) \right)^{2} \geq \frac{n^2}{2e^{2}}$. Using again the same argument as for $\gamma$, we can see that $\underset{i}{\max}\left|  2^{\alpha/2} -  d_{\alpha}(z_i, \z_0) \right| \to 0  $ for any $i $, which completes the proof.
\qed

\subsection{Proof of Corollary \ref{cor:kernels}}
\label{sec:proofcrkernels}

 First, note that the Assumption \Akone-\Akthree~straight forwardly hold true for the exponential inner product kernel with $\kernf(\xk,\xkd) = \exp(\xk^\top \xkd) = \sum_{j=0}^{\infty} \frac{1}{j!} (\xk^\top \xkd)^j$ and for the Gaussian kernel with 
\begin{equation*}
 \kernf(\xk,\xkd) = \exp(-\|\xk -\xkd \|_2^2 ) = \sum_{j=0}^{\infty} \frac{2^j}{j!} (\xk^\top \xkd)^j \exp(-\|\xk\|_2^2)\exp(-\|\xkd\|_2^2). 
\end{equation*}
Next, note that the $\alpha$-exponential kernel with $\alpha <2$ is already explicitly covered in Theorem \ref{thm:main_thm}. Hence, the only thing left to show is that Theorem \ref{thm:main_thm} also applies to ReLU-NTK.

We use the definition of the Neural Tangent Kernel presented in \cite{Arora19,Lee18}. Let $L$ be the depth of the NTK and $\sigma: \mathbb{R} \to \mathbb{R}$ the activation function which is assumed to be almost everywhere differentiable. For any $i>0$, define the recursion
\begin{equation*}
  \begin{split}\Sigma^{(0)}(\xk,\xkd) &:= \xk^\top  \xkd \\
\Lambda^{(i)}(\xk,\xkd) &:= \begin{pmatrix} \Sigma^{(i-1)}(\xk,\xk) & \Sigma^{(i-1)}(\xk,\xkd) \\ \Sigma^{(i-1)}(\xk,\xkd) & \Sigma^{(i-1)}(\xkd,\xkd)\end{pmatrix} \\
\Sigma^{(i)}(\xk,\xkd) &:= c_{\sigma} \underset{(u,v) \sim \mathcal{N}(0, \Lambda^{(i)})}{\mathbb{E}}~\left[ \sigma(u) \sigma(v) \right] 
\end{split}
\end{equation*}
with $c_{\sigma} := \left[ \underset{v \sim \mathcal{N}(0, 1)}{\mathbb{E}}~\left[ \sigma(v)^2 \right] \right]^{-1}$. 
Furthermore, define
\begin{equation*}
     \dot{\Sigma}^{(i)} := c_{\dot{\sigma}} \underset{(u,v) \sim \mathcal{N}(0, \Lambda^{(i)})}{\mathbb{E}} ~\left[ \dot{\sigma}(u) \dot{\sigma}(v) \right] 
\end{equation*}
with $c_{\dot{\sigma}} := \left[ \underset{v \sim \mathcal{N}(0, 1)}{\mathbb{E}}~\left[ \dot{\sigma}(v)^2 \right] \right]^{-1}$ 
where $\dot{\sigma}$ is the derivative of $\sigma$. The NTK $\kntk$ of depth $L\geq 1$ is then defined as
\begin{equation*}
     \kntk(\xk,\xkd) := \sum_{i=1}^{L+1} \Sigma^{(i-1)}(\xk,\xkd) \prod_{j=i}^{L+1} \dot{\Sigma}^{(j)}(\xk,\xkd). 
\end{equation*}
We call a function $\sigma$ \textit{k-homogeneous}, if for any $\xk \in \mathbb{R}$ and any $a >0$, $\sigma(a\xk) = a^k \sigma(\xk)$. We can now show the following result from which the corollary follows. 

\begin{prop}
\label{prop:ntkass}
Assume that the activation function $\sigma$ is $k$-homogeneous and both the activation function and its derivative possess a Hermite-polynomial series expansion (see \cite{Daniely16}) where there exits $\jthresh \geq \floor{2/\beta}$ such that the $\jthresh$-th coefficient $a_{\jthresh} \neq 0$. Then, the NTK satisfies the Assumption \Akone-\Akthree~and hence Theorem \ref{thm:main_thm} applies. 
\end{prop}

In fact, we can easily see that any non linear activation function which is homogeneous and both the activation function and its derivative possesses a Hermite polynomial extension satisfies the assumptions in Proposition \ref{prop:ntkass}. In particular, this includes the popular ReLU activation function $\sigma(x) = \max(x,0)$ where the explicit expression for the Hermite polynomial extension can be found in \cite{Daniely16}.
\qed

\subsubsection{Proof of Proposition \ref{prop:ntkass}}


Essentially, the proof follows from the power series expression of the NTK $\kntk$ given in the following lemma. 
\begin{lemma} 
\label{lm:ntk1}
The NTK $\kntk$ possesses a power series expansion
\begin{equation*}
\label{eq:ntk1}
    \kntk(\xk,\xkd) = \sum_{j=0}^{\infty} (\xk,\xkd)^j g_j(\lvert\lvert \xk \rvert\rvert_2^2, \lvert\lvert \xkd \rvert\rvert_2^2),
\end{equation*}
which converges for any $\xk,\xkd \in \RR^d$ with $\xk,\xkd \neq 0$. Furthermore, for any $\uk,\ukd \in \RR_+$,
\begin{equation*}
    \label{eq:ntkgi}
    g_j(\uk,\ukd) = \sum_{i = -\infty}^{\infty} \eta_{j,i} (\uk \ukd)^{i/2}
\end{equation*}
and $\eta_{j,i} \geq 0 $. 
\end{lemma}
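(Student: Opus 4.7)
The plan is to prove this inductively from the recursive definition of the NTK. The starting point is a closed-form expression for expectations of the form $\mathbb{E}_{(u,v)\sim\mathcal{N}(0,\Lambda)}[\sigma(u)\sigma(v)]$ obtained by combining Mehler's identity with the $k$-homogeneity of $\sigma$. Writing $\sigma = \sum_{j \geq 0} a_j h_j$ in its Hermite expansion, rescaling $u = \sqrt{\Lambda_{11}}\, z_1$, $v = \sqrt{\Lambda_{22}}\, z_2$ with $(z_1, z_2)$ jointly Gaussian with unit variances and correlation $\Lambda_{12}/\sqrt{\Lambda_{11}\Lambda_{22}}$, and applying $\sigma(sw) = s^k \sigma(w)$ for $s > 0$ yields
\begin{equation*}
  \mathbb{E}_{(u,v)\sim\mathcal{N}(0,\Lambda)}[\sigma(u)\sigma(v)] = \sum_{j=0}^{\infty} a_j^2 \, \Lambda_{11}^{(k-j)/2}\,\Lambda_{22}^{(k-j)/2}\,\Lambda_{12}^{j},
\end{equation*}
with the analogous identity for $\dot\sigma$ of homogeneity degree $k-1$. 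Absolute convergence for $\Lambda_{11}, \Lambda_{22} > 0$ follows from Cauchy--Schwarz on $\Lambda$ together with $\sum_j a_j^2 < \infty$.

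Next, I would show by induction on the layer index $i$ that both $\Sigma^{(i)}$ and $\dot\Sigma^{(i)}$ admit a power series of the claimed form: the coefficient of $(\xk^\top \xkd)^j$ is of the shape $\sum_{l \in \mathbb{Z}} \eta_{j,l}^{(i)} (\|\xk\|_2^2 \|\xkd\|_2^2)^{l/2}$ with $\eta_{j,l}^{(i)} \geq 0$. The base case $\Sigma^{(0)}(\xk,\xkd) = \xk^\top \xkd$ is immediate. For the inductive step, iteratively applying $k$-homogeneity yields the diagonal collapse $\Sigma^{(i-1)}(\xk,\xk) = \|\xk\|_2^{2k^{i-1}}$, so the factor $\Lambda_{11}^{(k-j)/2}\Lambda_{22}^{(k-j)/2}$ becomes $(\|\xk\|_2 \|\xkd\|_2)^{(k-j)k^{i-1}}$, which already has the desired form. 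Then $[\Sigma^{(i-1)}(\xk,\xkd)]^{j}$ is expanded via repeated Cauchy products of the series assumed by the induction hypothesis; both Cauchy product and summation preserve the structural form of the coefficients, and nonnegativity is maintained because products and sums of nonnegative coefficient series remain nonnegative. The same argument handles $\dot\Sigma^{(i)}$ using the Hermite expansion of $\dot\sigma$ with exponent $k-1$.

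Finally, $\kntk = \sum_{i=1}^{L+1} \Sigma^{(i-1)} \prod_{j=i}^{L+1} \dot\Sigma^{(j)}$ is a finite combination of sums and products of the $\Sigma^{(i-1)}$ and $\dot\Sigma^{(j)}$, so the same closure properties yield the desired expansion for $\kntk$ with nonnegative coefficients. Convergence of the resulting power series for any $\xk, \xkd \neq 0$ is inherited layer by layer from convergence of each underlying Hermite expansion, using $|\xk^\top \xkd| \leq \|\xk\|_2\|\xkd\|_2$ to stay in the region of absolute convergence at every step.

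The main obstacle will be clean bookkeeping of the exponents in the Laurent-type expansion $\sum_{l\in\mathbb{Z}} \eta_{j,l}(\uk\ukd)^{l/2}$: since $(k-j)/2$ is negative for $j>k$ and typical $k$-homogeneous activations (e.g.\ ReLU with $k=1$) have nonzero Hermite coefficients for arbitrarily large $j$, genuine negative half-integer exponents of $\|\xk\|_2\|\xkd\|_2$ appear in the coefficients. The induction must therefore explicitly track the full two-sided index range $l \in \mathbb{Z}$, and the Cauchy-product step has to combine these two-sided expansions while preserving both the structural form and the nonnegativity of the resulting $\eta_{j,l}$, which is the only subtle combinatorial bookkeeping in the argument.
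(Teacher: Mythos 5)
Your proposal is correct and takes essentially the same route as the paper's proof: homogeneity plus the Hermite expansion of the dual activation (the paper's $t_{\sigma}(\rho)=\sum_i a_i^2\rho^i$ is exactly your Mehler-type identity), the diagonal collapse $\Sigma^{(i)}(x,x)=\|x\|_2^{2k^i}$, and a layer-wise induction in which sums and products of two-sided series with nonnegative coefficients preserve the claimed Laurent-type form, finishing with the finite sum/product defining the NTK. The only step the paper spells out more explicitly is the justification of the rearrangement into a power series in $x^\top x'$: it evaluates at auxiliary points $\tilde{x},\tilde{x}'$ with the same norms and $\tilde{x}^\top\tilde{x}'=|x^\top x'|$, so that all terms are nonnegative and absolute convergence follows, which is precisely what your Cauchy--Schwarz plus nonnegativity remark amounts to.
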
 
The proof of the lemma can be found in Section \ref{sec:proofntk}.
It is straight forward to verify from the proof of Lemma \ref{lm:ntk1} that $\Sigma^{(i)}$ and $\dot{\Sigma}^{(i)}$ are compositions of continuous functions and thus  $\kntk$ is also continuous for any $x,x' \neq 0$. Next, note that the Lipschitz continuity (Assumption \Aktwo) follows straight forwardly from Equation \eqref{eq:ntklipschitz} in the proof of Lemma \ref{lm:alphaexpbound}. 
In order to show that any $g_j$ from Lemma \ref{lm:ntk1} is smooth, recall that 
\begin{equation*}
    g_j(\uk,\ukd) = \sum_{l=-{\infty}}^{\infty} \eta_{j,l}^{(i+1)} (\uk\ukd)^{l/2} =: h_j(x y).
\end{equation*}
Therefore, $h_j$ is a Puiseux power series with divisor $2$. Furthermore, the function $\tilde{h}_j(t) := h_j(t^2) = \sum_{l=-{\infty}}^{\infty} \eta_{j,l}^{(i+1)} (t)^{l}$ is a Laurent series which converges for every $t \neq 0$. Hence, we can conclude that $\tilde{h}_j$ is smooth for any $t \neq 0$ and thus also $h_j$. Finally, because $(\uk,\ukd) \to \uk\ukd$ is trivially also a smooth function, we can conclude that any $g_j$ is a smooth function for any $\uk,\ukd \neq 0$. 
Next, since for any $l \in \mathbb{Z}$, $(\uk,\ukd) \to \alpha (\uk\ukd)^{l/2}$ is trivially a positive semi-definite (PSD) function whenever $\alpha \geq 0$ and sums of PSD functions are again PSD, we can conclude that the $g_j(\uk,\ukd) = \sum_{l=-{\infty}}^{\infty} \eta_{j,l}^{(i+1)} (\uk\ukd)^{l/2} $ is PSD for any $j$. Therefore, we can conclude that Assumption \Akone~ holds as well. 

The only thing left to show is Assumption \Akthree. While we have already shown that $g_j$ are smooth in a neighborhood of $(1,1)$, we still need to show that there exists $\jthresh> \floor{2/\beta}$ such that $g_{\jthresh}(1,1) >0$. However, this follows from the fact that by assumption there exists $\jthresh> \floor{2/\beta}$ such that $a_{\jthresh} \neq 0 $ where $a_j$ are the Hermite coefficients of the activation function $\sigma$.
\qed

\section{Different scalings $\bw$}
\label{sec:Aotherbw}

In this section, we present results for different choices of the scaling beyond the standard choice $\bw \asymp \effdim$. In Subsection \ref{sec:flatlimit}, we give a proof of Theorem \ref{thm:otherchoicesoftau} describing the \textit{flat limit}, i.e. the limit of the interpolant where for any fixed $n,d$, $\bw \to \infty$. Furthermore, in order to get a more comprehensive picture, we additionally present straight forward results for other choices of $\bw$ in Section \ref{sec:simplertaucase}.

\subsection{Proof of Theorem \ref{thm:otherchoicesoftau}} 
\label{sec:flatlimit}
We use again the same notation as used for the proof of Theorem \ref{prop:main_thm_ext} where we  set $\z_i = \x_i/\sqrt{\effdim}$ and let $\Zrv = \Xrv/\sqrt{\effdim}$ be the random variable with $X \sim \prob_X$. We can again condition throughout the proof on the event $\EXset$. In particular, we assume throughout the proof that $n$ is sufficiently large since we are only interested in the asymptotic behaviour.  Furthermore, recall the definition of $\Dalpha$, which is the $n\times n$ matrix with entries $(\Dalpha)_{i,j} = \| \z_i-\z_j \|_2^{\alpha}$ and denote with $\Dalpha^{-1}$ its inverse. In addition, denote with $ \dvec$ the vector with entries $\dveci{i} = \| \z_i - \Zrv \|_2^{\alpha}$ and with $d_{\alpha}$ the function $d_{\alpha}(\zk,\zkd) = \| \zk-\zkd\|_2^{\alpha}$.


First, although the limit $\lim_{\tau \to \infty} K^{-1}$ does not exists, we can apply Theorem 3.12 in~\cite{Lee2014} to show that the flat limit interpolator $\flatlimit := \lim_{\bw \to \infty} \fhatinterpol$ of any kernel satisfying the assumption in Theorem \ref{thm:otherchoicesoftau} exists and has the form 
\begin{equation*}
    \flatlimit(\Zrv) = \begin{pmatrix}Y & 0 \end{pmatrix} \begin{pmatrix} -\Dalpha & 1 \\ 1^T & 0 \end{pmatrix}^{-1} \begin{pmatrix} \dvec \\ 1 \end{pmatrix}.
\end{equation*}
Furthermore, for the $\alpha$-exponential kernel, we use Theorem 2.1 in~\cite{Blumenthal60} to show that it satisfies the assumptions imposed on the eigenvalue decay in Theorem \ref{thm:otherchoicesoftau}.

\begin{remark}
 The estimator $\flatlimit$ is also called the \textit{polyharmonic spline interpolator}. This estimator is invariant under rescalings of the input data which is also the reason why we can rescale the input data by $\sqrt{\effdim}$, i.e. consider $ \z_i =  \x_i/\sqrt{\effdim}$ as input data points. 
\end{remark}

We already know from lemma \ref{lm:alphaexpbound} that the matrix $\Dalpha$  has $n-1$ negative eigenvalues and one positive eigenvalue. In particular,  we have shown that $ \left| \lambda_i(\Dalpha) \right| \geq \tilde{c} >0$, for $i$, where $\tilde{c}$ is some positive constant. Next, note that because $\Dalpha$ has full rank, we can conclude from Theorem~3.1 in \cite{Ikramov00} that the matrix $\begin{pmatrix} -\Dalpha & 1 \\ 1^T & 0 \end{pmatrix}$ has $n$ positive eigenvalues and one negative. Hence, 
$$ \det \begin{pmatrix} - \Dalpha & 1 \\ 1^T & 0 \end{pmatrix} = \det(-\Dalpha)(1^T \Dalpha^{-1} 1) \neq 0 $$
and $1^T\Dalpha^{-1}1 > 0$. In particular, this allows us to use the block matrix inverse to show that 
\begin{equation*}
   \begin{pmatrix} -\Dalpha & 1 \\ 1^T & 0 \end{pmatrix}^{-1}  = \begin{pmatrix}  -\Dalpha^{-1}+ \frac{\Dalpha^{-1} 1 1^T \Dalpha^{-1}}{1^T \Dalpha^{-1} 1}  & \Dalpha^{-1} 1 \frac{1}{1^T \Dalpha^{-1} 1} \\ (\Dalpha^{-1}1 \frac{1}{1^T \Dalpha^{-1} 1})^T &  \frac{1}{1^T \Dalpha^{-1} 1} \end{pmatrix}
\end{equation*}
and therefore, 
\begin{equation*}
    \flatlimit(\Zrv) = \y^T \underbrace{\left[ -\Dalpha^{-1}+ \frac{\Dalpha^{-1} 1 1^T \Dalpha^{-1}}{1^T \Dalpha^{-1} 1}  \right]}_{=: A} \dvec +  \frac{\y^T \Dalpha^{-1} 1}{1^T \Dalpha^{-1} 1}. 
\end{equation*}
Next, using the Binomial series expansion, we can see that for any $q\in \NN$,
\begin{equation*}
    \begin{split}
    d_{\alpha}(\z_i,\Zrv) =\sum_{j=0}^{q} {j \choose \alpha/2} 2^{\alpha/2 }\left(\frac{1}{2} \| \z_i - \Zrv\|_2^2 - 1\right)^j + O\left(\left( \frac{1}{2}\| \z_i - \Zrv\|_2^2 - 1\right)^{q+1}\right).
        \end{split}
\end{equation*}
Furthermore, by Lemma \ref{lm:innerpr} and the fact that $\| \z_i - \Zrv\|_2^2 = \z_i^\top \z_i + \Zrv^\top \Zrv - 2 \z_i^\top \Zrv$, we can see that for $q = \floor{2/\beta}$, $nO\left(\left( \frac{1}{2}\| \z_i - \Zrv\|_2^2 - 1\right)^{q+1}\right) \to 0$. Hence, assuming that the absolute eigenvalues $|\lambda_i(A)|$ of $A$ are all upper bounded by a non-zero positive constant, we can use exactly the same argument as used in the proof of Theorem \ref{prop:main_thm_ext} to conclude the proof.

Thus, we only need to show that the eigenvalues $\vert \lambda_i(A) \vert$ are upper bounded. We already know from Lemma \ref{lm:alphaexpbound} that there exists some constant $c>0$ independent of $n$, such that $\opnorm{ \Dalpha^{-1}} \leq c$. Thus, we only need to show that $\opnorm{ \frac{\Dalpha^{-1} 1 1^T \Dalpha^{-1}}{1^T \Dalpha^{-1} 1}}$ is almost surely upper bounded. Because $\frac{\Dalpha^{-1} 1 1^T \Dalpha^{-1}}{1^T \Dalpha^{-1} 1}$ is a rank one matrix, we know that
\begin{equation*}
    \opnorm{ \frac{\Dalpha^{-1} 1 1^T \Dalpha^{-1}}{1^T \Dalpha^{-1} 1}} = \frac{ 1^T \Dalpha^{-2} 1}{1^T \Dalpha^{-1} 1}.
\end{equation*}
First, we show that 
\begin{equation*}
    1^T \Dalpha^{-2} 1 = O(1) \quad 
\end{equation*}  
Let $\lambda_2,...,\lambda_{n}$ be the $n-1$ negative eigenvalues of $\Dalpha^{-1}$ and $\lambda_1$ the only positive eigenvalue. Furthermore, let $v_i$ be the corresponding orthonormal eigenvectors. Next, let $\alpha_i \in\mathbb{R}$ be such that $1 = \sum_{i = 1}^n \alpha_i v_i$. Since $\| 1\|_2 = \sqrt{n}$, we know that $\alpha_i \leq \sqrt{n}$. Then, 
\begin{align*}
    1^T \Dalpha^{-1} 1 &= \alpha_1^2 \frac{1}{\lambda_1} - \sum_{i = 2}^{n} \alpha_i^2 \frac{1}{|\lambda_i|} >0 \\
   \textrm{and}~~ 1^T \Dalpha^{-2} 1 &=  \alpha_1^2 \frac{1}{\lambda_1^2} + \sum_{i = 2}^{n} \alpha_i^2 \frac{1}{\lambda_i^2}.
\end{align*}
where we use that $1^T \Dalpha^{-1} 1>0$ which we already know from the discussion above. Because by the binomial expansion, $d_{\alpha}(\z_i,\z_j) = 2^{\alpha/2} +  O(\frac{1}{2}\|\z_i -\z_j \|_2^2-1)$, Lemma \ref{lm:innerpr} implies that $\underset{i\neq j}{\max} d_{\alpha}(\z_i,\z_j) \to 2^{\alpha/2}$ and hence, $\frac{1}{n} 1^T \Dalpha 1 \gtrsim n $, for any $n$ sufficiently large. Therefore, $\lambda_{1} \geq n$. Hence, there exists some constant $c>0$ independent of $n$, such that $\alpha_1^2 \frac{1}{\lambda_1} \leq c $. As a consequence, 
\begin{equation*}
    c \geq \alpha_1^2 \frac{1}{\lambda_1} \geq \sum_{i = 1}^{n-1} \alpha_i^2 \frac{1}{|\lambda_i|} \geq \tilde{c}\sum_{i = 1}^{n-1} \alpha_i^2 \frac{1}{\lambda_i^2},
\end{equation*} 
where we use in the last inequality that $\vert \lambda_i\vert \geq \tilde{c}$. Furthermore,  $ \alpha_1^2 \frac{1}{\lambda_1^2} = O(\frac{1}{n})$. Thus, we conclude that $ 1^T \Dalpha^{-2} 1 = O(1)$. 

In order to prove the result, we are only left to study the case where $1^T \Dalpha^{-2} 1 \geq 1^T \Dalpha^{-1} 1 \to 0$.
We prove by contradiction and assume that $\frac{ 1^T\Dalpha^{-2} 1}{1^T \Dalpha^{-1} 1}  \to  \infty$ and $1^T \Dalpha^{-1} 1 \to 0$. 
Let $\gamma = 1^T \Dalpha^{-1} 1$ and $v = \Dalpha^{-1} 1$. Furthermore, let $\tilde{v} = \begin{pmatrix} -\gamma \\ 0 \\ \vdots \\0  \end{pmatrix}$. We know that $1^T(v +\tilde{v}) = 0$ and hence,
\begin{equation*}
    (v + \tilde{v})^T \Dalpha  (v + \tilde{v}) \leq - \tilde{c} (v + \tilde{v})^T (v + \tilde{v}) ~~ 
\end{equation*} 
Next, note that our assumption imply that for $n$ sufficiently large,  $v^\top v -\gamma^2 \geq 1/2 v^\top v ~~$. 
Therefore,
\begin{align*}
      - \frac{\tilde{c}}{2} v^Tv  &\geq (v+\tilde{v})^T \Dalpha (v+\tilde{v})  = v^T \Dalpha v + 2 \tilde{v}^T \Dalpha v + \tilde{v}^T \Dalpha \tilde{v} =  1^T \Dalpha^{-1} 1 + 2 \tilde{v}^T 1 + \gamma^2 d_{\alpha}(X_1,X_1) =  \gamma - 2 \gamma, 
\end{align*} 
and  using the fact that $\gamma = 1^T \Dalpha^{-1} 1$ is positive, we get that $1 \geq \frac{\tilde{c}}{2} \frac{v^Tv}{\gamma}~~ $. However, this contradicts the assumption that $\frac{v^Tv}{\gamma} \to \infty$ and hence we can conclude the proof.\qed

%

\subsection{Additional results}
\label{sec:simplertaucase}
In this section, we present some additional results for different choices of the scaling. The results presented in this section are straight forward but provide a more complete picture for different choices of the scaling $\bw$. We use again the same notation as used in \suppmat~\ref{sec:prop1_proof}.

First, we show the case where $\bw \to 0$. We assume that $k$ is the $\alpha$-exponential kernel with $\alpha\in (0,2]$, i.e. $\kernf(\xk, \xkd) = \exp(-\| \xk-\xkd\|_2^{\alpha})$. 
\begin{lemma} 
\label{lm:simplertaucase}
Let $\prob_X$ satisfy Assumption \Bdone-\Bdtwo~ and assume that the bandwidth $\bw/\effdim =  O(n^{-\theta})$ with $\theta >0$. Furthermore, assume that the ground truth function $f^\star$ is bounded. 
Then, conditioned on the event $\EXset$, for any $\lambda \geq 0$,  with probability $\geq 1- (n+1)^2 \exp(-C  (\log(n))^{1+\epsilon})$ over the draws of $\Xrv \sim \prob_X$,
\begin{equation*}
    \EEobs \fhatridge (\Xrv) \to 0 ~.
\end{equation*} 
\end{lemma}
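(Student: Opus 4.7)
The core intuition is that when the bandwidth is much smaller than the effective dimension, the rescaled inputs $\z_i = \x_i/\sqrt{\bw}$ all have norms tending to infinity, so the $\alpha$-exponential kernel values between distinct points decay super-polynomially to $0$, while self-values remain $1$. This collapses the prediction function to near $0$. My plan is to condition throughout on $\EXset$ and simultaneously on the analogous concentration event for the test point $\Xrv$ provided by Lemma~\ref{lm:innerpr} (this gives the stated failure probability $\leq (n+1)^2 \exp(-C(\log n)^{1+\epsilon})$), and then perform three quantitative estimates on the representer-theorem formula
\begin{equation*}
\EEobs \fhatridge(\Xrv) = \fstar(\Xs)^\top (\kermat+\lambda \idmat_n)^{-1} \kvec.
\end{equation*}

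\textbf{Step 1: Uniform smallness of $\kvec$ and off-diagonal entries of $\kermat$.} Under the concentration events, $\|\z_i\|_2^2 = \|\x_i\|_2^2/\bw$ concentrates around $\effdim/\bw$, the pairwise inner products satisfy $|\z_i^\top \z_j|/(\effdim/\bw) \lesssim n^{-\beta/2}(\log n)^{(1+\epsilon)/2}$, and similarly for $\z_i^\top \Zrv$. Since $\bw/\effdim = O(n^{-\theta})$, we obtain
\begin{equation*}
\|\z_i-\Zrv\|_2^2 \;\geq\; c\,\effdim/\bw \;\gtrsim\; n^{\theta}, \qquad \|\z_i-\z_j\|_2^2 \;\gtrsim\; n^{\theta} \quad (i\neq j),
\end{equation*}
for $n$ large. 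Hence each entry of $\kvec$ and every off-diagonal entry of $\kermat$ is bounded in absolute value by $\exp(-c n^{\theta\alpha/2})$, which decays faster than any polynomial in $n$.

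\textbf{Step 2: Conditioning of $\kermat+\lambda\idmat_n$.} Since the diagonal of $\kermat$ equals $1$, Step~1 gives $\|\kermat - \idmat_n\|_{\mathrm{op}} \leq n\, e^{-cn^{\theta\alpha/2}} \to 0$. Therefore, for $n$ sufficiently large, $\kermat + \lambda\idmat_n \succeq \tfrac{1}{2}\idmat_n$ uniformly in $\lambda\geq 0$, so $\|(\kermat+\lambda\idmat_n)^{-1}\|_{\mathrm{op}} \leq 2$.

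\textbf{Step 3: Combine.} Because $\fstar$ is bounded on the support of $\prob_X$, we have $\|\fstar(\Xs)\|_2 \leq \sqrt{n}\,\|\fstar\|_\infty$, and by Step~1, $\|\kvec\|_2 \leq \sqrt{n}\, e^{-cn^{\theta\alpha/2}}$. Using Cauchy--Schwarz and Step~2,
\begin{equation*}
|\EEobs \fhatridge(\Xrv)| \;\leq\; \|\fstar(\Xs)\|_2 \cdot \|(\kermat+\lambda\idmat_n)^{-1}\|_{\mathrm{op}} \cdot \|\kvec\|_2 \;\leq\; 2\|\fstar\|_\infty\, n\, e^{-cn^{\theta\alpha/2}} \;\to\; 0,
\end{equation*}
which gives the claim. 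There is no real obstacle here: the argument is much simpler than Theorem~\ref{prop:main_thm_ext} because the tiny bandwidth forces the kernel matrix to be essentially $\idmat_n$, so no power-series expansion or Hadamard-product lower bound is needed. The only care required is to invoke Lemma~\ref{lm:innerpr} simultaneously for the $n$ training points and the fresh test point $\Xrv$, which yields the stated $(n+1)^2$ factor in the failure probability.
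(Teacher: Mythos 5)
Your proposal is correct and follows essentially the same route as the paper's proof: condition on the concentration events from Lemma~\ref{lm:innerpr} (including the test point, giving the $(n+1)^2$ factor), show the kernel matrix converges to $\idmat_n$ in operator norm and the kernel vector $\kvec$ decays like $\exp(-cn^{\theta\alpha/2})$, then conclude via the representer formula and boundedness of $\ftrue$. The only differences are cosmetic (rescaling by $\sqrt{\bw}$ instead of $\sqrt{\effdim}$ and using $\ell_2$/operator-norm bounds instead of the $\ell_1$ bound).
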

\begin{proof}

Let $\tilde{\tau} = \tau/\effdim$ and define $\z_i = \x_i/\sqrt{\effdim}$ and $\Zrv = \Xrv/\sqrt{\effdim}$. 
Lemma \ref{lm:innerpr} shows that $\| \z_i - \z_j\|_2^2$ concentrates around $2 (1-\delta_{i,j})$. Hence, $k_{\bw}(\x_i,\x_j) = \exp(-\tilde{\bw}^{-\alpha/2} \| \z_i - \z_j\|_2^{\alpha} ) \to \delta_{i,j}~~$ because $\tilde{\tau} \to 0$. In fact, due to the assumption that $\tilde{\tau} = O(n^{-\theta})$ with $\theta >0$, we can see that 
\begin{equation*}
    \opnorm{  K - \textrm{I}_n} \leq n ~ \max_{i \neq j} | \exp(-n^{\theta \alpha/2}\| \z_i - \z_j\|_2^{\alpha} )| \to 0 ~~,
\end{equation*}
and with probability $\geq 1- (n+1)^2 \exp(-C  (\log(n))^{1+\epsilon})$ over the draws of $\Xrv$,
\begin{equation*}
    \|\kvec \|_1 \leq n ~ \max_{i} | \exp(-n^{\theta \alpha/2}\| \z_i - \Zrv\|_2^{\alpha} )| \to 0.
\end{equation*}
 Hence, the result follows immediately from $\fhatridge(X) = y^T (\kermat +\lambda \idmat)^{-1} k_{\bw}(\Xs,\Xrv)$.
\end{proof}

We can also show a similar result for the case where $\bw \to \infty$ and $\lambda$ does not vanish. 

\begin{lemma} 
\label{lm:simplertaucase2}
Let $\prob_X$ satisfy Assumption \Bdone-\Bdtwo~ and assume that the bandwidth $\bw/ \effdim =  O(n^{\theta})$ with $\theta >\frac{2}{\alpha}$. Furthermore, assume that $\lambda = \Omega(1)$ and that the ground truth function $\ftrue$ is bounded. 
Then, conditioned on the event $\EXset$, for any $\lambda \geq 0$,  with probability $\geq 1- (n+1)^2 \exp(-C  (\log(n))^{1+\epsilon})$ over the draws of $\Xrv \sim \prob_X$,
\begin{equation*}
    \EEobs \fhatridge (\Xrv) \to c,
\end{equation*} 
with $c = f(\Xs)^T(11^T + \lambda \idmat_n)^{-1} 1$
\end{lemma}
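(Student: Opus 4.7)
\textbf{Proof proposal for Lemma~\ref{lm:simplertaucase2}.}

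The plan is to show that, under the huge-bandwidth regime $\tilde{\bw} := \bw/\effdim = \Omega(n^{\theta})$ with $\theta\alpha/2 > 1$, both the kernel matrix $K$ and the test-kernel vector $\kvec(X)$ collapse to the all-ones matrix $11^\top$ and all-ones vector $1$, and then push this closeness through the representer formula $\EEobs\fhatridge(X) = f(\Xs)^\top(K+\lambda\idmat_n)^{-1}\kvec(X)$, using the fact that $\lambda = \Omega(1)$ keeps all relevant inverses well-conditioned.

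First, rescale as in the proof of Lemma~\ref{lm:simplertaucase} by setting $z_i = x_i/\sqrt{\effdim}$ and $Z = X/\sqrt{\effdim}$. Conditioning on $\EXset$ (and, for $Z$, on the high-probability event $\Uset$ from Lemma~\ref{lm:innerpr}), a first-order Taylor expansion of $\exp$ and the concentration $\|z_i - z_j\|_2^2 = 2(1-\delta_{ij}) + O(n^{-\beta/2}(\log n)^{(1+\epsilon)/2})$ give
\begin{equation*}
\max_{i\neq j} |K_{ij} - 1| = \max_{i\neq j}\left|\exp(-\tilde{\bw}^{-\alpha/2}\|z_i - z_j\|_2^\alpha) - 1\right| \lesssim \tilde{\bw}^{-\alpha/2} \lesssim n^{-\theta\alpha/2},
\end{equation*}
while $K_{ii} = 1$ exactly, and analogously $\|\kvec - 1\|_\infty \lesssim n^{-\theta\alpha/2}$ with the claimed probability over $X$. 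Since the diagonal of $K - 11^\top$ vanishes, the Frobenius norm bound yields $\|K - 11^\top\|_{\mathrm{op}} \leq \|K - 11^\top\|_F \lesssim n^{1-\theta\alpha/2}$ and $\|\kvec - 1\|_2 \lesssim n^{1/2 - \theta\alpha/2}$.

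Next, decompose the error as
\begin{equation*}
f(\Xs)^\top(K+\lambda\idmat_n)^{-1}\kvec - f(\Xs)^\top(11^\top+\lambda\idmat_n)^{-1}1 = T_1 + T_2,
\end{equation*}
where $T_1 = f(\Xs)^\top(K+\lambda\idmat_n)^{-1}(\kvec - 1)$ and, using $A^{-1} - B^{-1} = -A^{-1}(A-B)B^{-1}$,
\begin{equation*}
T_2 = -f(\Xs)^\top(K+\lambda\idmat_n)^{-1}(K - 11^\top)(11^\top + \lambda\idmat_n)^{-1}1.
\end{equation*}
Since $\ftrue$ is bounded, $\|f(\Xs)\|_2 \lesssim \sqrt{n}$; since $\lambda = \Omega(1)$, $\|(K+\lambda\idmat_n)^{-1}\|_{\mathrm{op}} \leq \lambda^{-1}$; and since $(11^\top + \lambda\idmat_n)^{-1}1 = 1/(n+\lambda)$, one has $\|(11^\top + \lambda\idmat_n)^{-1}1\|_2 = \sqrt{n}/(n+\lambda) \lesssim n^{-1/2}$. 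Putting these together, $|T_1| \lesssim \sqrt{n}\cdot n^{1/2-\theta\alpha/2} = n^{1-\theta\alpha/2} \to 0$ and $|T_2| \lesssim \sqrt{n}\cdot n^{1-\theta\alpha/2}\cdot n^{-1/2} = n^{1-\theta\alpha/2} \to 0$, both by the assumption $\theta\alpha/2 > 1$.

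The only slightly delicate point is the second bound: a naive use of $\|(11^\top+\lambda\idmat_n)^{-1}\|_{\mathrm{op}} \leq \lambda^{-1}$ would produce an extra factor $\sqrt{n}$ and require the stronger assumption $\theta > 3/\alpha$. The savings come from evaluating the inverse on the specific vector $1$, which lies in the top eigenspace of $11^\top$ with eigenvalue $n$, yielding $\|(11^\top+\lambda\idmat_n)^{-1}1\|_2 = O(n^{-1/2})$ rather than $O(1)$. This is the one place in the argument where the constraint $\theta > 2/\alpha$ is tight, and it is the step I would write out most carefully; everything else is a concentration plus perturbation calculation that mirrors the proof of Lemma~\ref{lm:simplertaucase}. \qed
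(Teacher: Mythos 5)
Your proposal is correct and follows essentially the same route as the paper: show $K$ collapses to $11^\top$ and $\kvec$ to $1$ via Lemma~\ref{lm:innerpr}, use $\lambda = \Omega(1)$ to control $(K+\lambda \idmat_n)^{-1}$, and exploit the special direction $1$ (for you, the eigenvector relation $(11^\top+\lambda\idmat_n)^{-1}1 = \tfrac{1}{n+\lambda}1$; in the paper, the explicit inverse $a\idmat_n + b11^\top$) so that only $\theta\alpha/2>1$ is needed. If anything, your explicit resolvent-identity decomposition into $T_1$ and $T_2$ spells out a step the paper's proof states rather tersely, so no changes are needed.
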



\begin{proof}
 We use the same notation as in Lemma \ref{lm:simplertaucase2}.
Again due to Lemma \ref{lm:innerpr}, we find that
\begin{equation*}
\label{eq:case3eqmat}
    \opnorm{  K - 11^T } \lesssim n ~ \max_{i \neq j} | \exp(-n^{-\theta\alpha/2}\| \z_i - \z_j\| ) -1| \to 0 ~~.
\end{equation*}
As a result, we observe that the kernel matrix $K$ converges asymptotically to the rank one matrix $11^T$. We remark that such a phenomenon can also be observed in Theorem~2.1 and 2.2 in \cite{ElKaroui10} when $\trace(\Sigma_d)/d \to 0$. 
As a consequence, we observe that the eigenvalues of $K^{-1}$ diverge as $n \to \infty$. However, because by assumption, $\lambda = \Omega(1)$, we can still conclude that the eigenvalues of $(K +\lambda \textrm{I}_n)^{-1}$ are upper bounded by a constant independent of $n$.  Hence, with probability $\geq 1- (n+1)^2 \exp(-C  (\log(n))^{1+\epsilon})$ over the draws of $\Xrv \sim \prob_X$,
\begin{equation*}
\begin{split}
    \| (K+\lambda \textrm{I}_n)^{-1} \kvec - (11^T+\lambda \textrm{I}_n)^{-1} 1\| _1 &\lesssim n ~ \max_{i} | \exp(-n^{-\theta \alpha /2} c) -1| \\
        &\lesssim  n^{-\theta \alpha /2 +1} + O(n^{-\theta \alpha +1})\to 0,
    \end{split}
\end{equation*}
where we have used that $\theta \alpha/2 > 1$. 

The only thing left to show is that $f(\Xs)^T(11^T + \lambda \idmat_n)^{-1} 1$ does not diverge. For this, let $a \In + b 11^T$ be the inverse of $11^T+\lambda \textrm{I}_n$. As a result of a simple computation we find that $a = \frac{\lambda ( \lambda + 1 +(n-1)) + (n-1)}{\lambda ( \lambda + 1 +(n-1))}$ and $b = \frac{-1}{\lambda ( \lambda + 1 +(n-1))} $. Hence,
\begin{equation*}
\begin{split}
     \|(11^T+\lambda \textrm{I}_n)^{-1} 1 \|_1 = n\vert a + (n-1) b\vert &= \sqrt{n} \vert \lambda b\vert = \frac{n}{ ( \lambda + 1 +(n-1))} \to 1,
\end{split}
\end{equation*}
which completes the proof. 

\end{proof}

\section{Technical lemmas}
\subsection{Proof of Lemma \ref{lm:innerpr}}

\label{sec:technical_prel}
We begin with the following Lemma, which is a direct consequence of the results in Appendix~A in \cite{ElKaroui10}.
\begin{lemma}[Concentration of quadratic forms]
\label{lm:concentration}
Suppose the vector $\x \in \mathbb{R}^d$, of some dimension $d \in \mathbb{N}_+$, is a random vector with either 
\begin{enumerate}
 \item i.i.d entries $\xel{i}$ almost surely bounded $|\xel{i}| \leq c$ by some constant $c>0$ and with zero mean and unit variance.
 \item standard normal distributed i.i.d entries.
\end{enumerate}
 Let $M$ be any symmetric matrix with $\opnorm{M} = 1$ and let $M = M_+ - M_{-}$ be the decomposition of $M$ into two positive semi-definite matrices $M_+$ and $M_-$ with $\opnorm{M_+}, \opnorm{M_-} \leq 1$. Then, there exists some positive constants $C_1,C_2,C_3$ independent of $M$ such that for any $r > \zeta = C_1/\trace(M_+)$,
\begin{equation*}
    \begin{split}
        P(|\x^T M x/\trace(M_+) - \trace(M)/\trace(M_+)| > r)  \lesssim  \exp(-C_2\trace(M_+)(r/2 - \zeta)^2) +  \exp(-C_3\trace(M_+)).
    \end{split}
\end{equation*}
\end{lemma}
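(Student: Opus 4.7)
The plan is to derive this inequality as a direct application of classical Hanson-Wright type concentration for quadratic forms in random vectors with independent sub-Gaussian coordinates, a class that covers both the bounded case and the standard Gaussian case. The starting point is the decomposition
\[
\x^T M \x - \trace(M) \;=\; \bigl(\x^T M_+ \x - \trace(M_+)\bigr) \;-\; \bigl(\x^T M_- \x - \trace(M_-)\bigr),
\]
using that $\EE[\x^T A \x] = \trace(A)$ since the coordinates of $\x$ have zero mean and unit variance. By the triangle inequality and a union bound, it suffices to control each of the two centered quadratic forms separately with deviation $(r/2)\trace(M_+)$.

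First I would handle $M_+$. In the Gaussian case, diagonalize $M_+ = U \Lambda U^T$ with eigenvalues $\lambda_i \in [0,1]$, so that $\x^T M_+ \x \stackrel{d}{=} \sum_i \lambda_i g_i^2$ for i.i.d.\ standard normals $g_i$. Laurent--Massart's tail bound for weighted $\chi^2$ random variables then yields
\[
\PP\bigl(|\x^T M_+ \x - \trace(M_+)| > t\bigr) \;\leq\; 2 \exp\!\Bigl(-c \min\bigl(t^2/\|\Lambda\|_F^2,\; t/\|\Lambda\|_{\op}\bigr)\Bigr).
\]
Since $\|\Lambda\|_{\op} \leq 1$ and $\|\Lambda\|_F^2 \leq \|\Lambda\|_{\op}\trace(\Lambda) \leq \trace(M_+)$, setting $t = (r/2)\trace(M_+)$ produces the two-regime exponent $-c\,\trace(M_+)\min(r^2/4, r/2)$. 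In the bounded case, the entries of $\x$ are sub-Gaussian with proxy depending only on $c$, and the Hanson--Wright inequality yields the same form of bound.

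Next I would apply the same two estimates to $M_-$, again with the target deviation $(r/2)\trace(M_+)$; the corresponding exponent involves $\min\!\bigl((r\trace(M_+))^2/\trace(M_-),\, r\trace(M_+)\bigr)$, which combined with $\trace(M_-) \leq \trace(M_+) + |\trace(M)|$ and $\|M_-\|_{\op}\leq 1$ can be folded into the two advertised terms. Summing the four tail contributions (two each for $M_+$ and $M_-$) and collecting the Gaussian-type pieces into the $\exp(-C_2 \trace(M_+)(r/2-\zeta)^2)$ factor and the sub-exponential pieces into $\exp(-C_3 \trace(M_+))$ gives the claimed bound.

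The one place that requires real care, and which I view as the main technical obstacle, is the $\zeta = C_1/\trace(M_+)$ threshold: it encodes the regime where the Hanson--Wright Gaussian tail transitions into the sub-exponential tail, and also absorbs the lower-order corrections that appear when $\trace(M_-)$ is not directly comparable to $\trace(M_+)$. Tracking these correction terms so that the final bound can be written with a single Gaussian exponent and a single sub-exponential exponent (rather than four separate terms) is the bookkeeping step that takes the most work, and is essentially the computation performed in Appendix~A of~\cite{ElKaroui10}, which we cite rather than reproduce.
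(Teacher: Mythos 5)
Your route is genuinely different from the paper's. The paper does not invoke Hanson--Wright on the quadratic form at all; it applies concentration of Lipschitz functions (Ledoux for the bounded case, Gromov--Milman for the Gaussian case) to the map $x \mapsto \|M_+^{1/2}x\|/\sqrt{\trace(M_+)}$, which is $\sqrt{\opnorm{M_+}}/\sqrt{\trace(M_+)}$-Lipschitz, obtains a genuinely sub-Gaussian tail for the \emph{norm}, and then hands off the conversion from the square root to the quadratic form to El Karoui's Lemma~A.2 line by line. That ``square-root trick'' is what produces the single $(r/2-\zeta)^2$ exponent: the threshold $\zeta = C_1/\trace(M_+)$ is the Jensen gap $\sqrt{\trace(M_+)} - \EE\|M_+^{1/2}x\| \asymp 1/\sqrt{\trace(M_+)}$, not, as you suggest, the crossover between the Gaussian and sub-exponential regimes of Hanson--Wright. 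In the Hanson--Wright picture you describe, there is no such Jensen gap and $\zeta$ would have to be manufactured from scratch, while the built-in $\min(r^2, r)$ two-regime exponent would have to be reconciled with the single-exponent form of the claim.

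There is also a concrete gap in your treatment of $M_-$. You bound $\trace(M_-) \leq \trace(M_+) + |\trace(M)|$ and say the resulting exponent ``can be folded into the two advertised terms.'' But when $\trace(M_-) \gg \trace(M_+)$, the Hanson--Wright Gaussian-regime exponent for the $M_-$ contribution at deviation $t = (r/2)\trace(M_+)$ is of order $t^2/\|M_-\|_F^2 \gtrsim r^2\,\trace(M_+)^2/\trace(M_-)$, which is strictly weaker than the $r^2\,\trace(M_+)$ appearing in the lemma's exponent, and your trace bound does nothing to close this factor of $\trace(M_-)/\trace(M_+)$. The paper's route avoids having to confront this directly because the Lipschitz constant of $x \mapsto \|M_-^{1/2}x\|$ is bounded by $\sqrt{\opnorm{M_-}} \leq 1$ independently of $\trace(M_-)$; the actual reconciliation is carried out inside El Karoui's Lemma~A.2, which the paper cites precisely so as not to reproduce that bookkeeping. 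If you want to keep the Hanson--Wright route you would need to either supply that reconciliation explicitly or restrict to the positive-semidefinite case (which is in fact what is used downstream in Lemma~\ref{lm:innerpr}); as written, the ``folding'' step is asserted, not proved.
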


%

\paragraph{Case 1: Distribution $\prob_X \in \probsetcov$}
Following the same argument as the one used in Corollary A.2 in \cite{ElKaroui10},
we can use Lemma \ref{lm:concentration} to show that there exists constants $C_2,C_3 >0 $ such that for $n \to\infty$,
\begin{equation*}
 P(|\x_i^T \x_j/\trace(\inputcov) - \delta_{i,j}| > r)  \leq C_3[\exp(-C_2\trace(\inputcov)(r/2 - \zeta)^2) +  \exp(-C_2\trace(\inputcov)].
\end{equation*}
We now make us of the Borel-Cantelli Lemma. For any $\epsilon > 0$, let $r(n) = \frac{1}{\sqrt{2}} n^{-\beta/2} \ (\log(n))^{(1+\epsilon)/2} $ and note that because $ \trace(\inputcov) \asymp n^{\beta}$, $\zeta$ decays at rate $n^{-\beta}$ and in particular, for any $n$ sufficiently large, $r(n)/2 > \zeta \to 0$. Hence, we can see that there exists some constant $C>0$ such that for any $n$ sufficiently large
\begin{equation*}
\label{eq:px}
    \begin{split}
        P(|\x_i^T \x_j/\trace(\inputcov) - \delta_{i,j}| > r(n)) \leq \exp(-C \ (\log(n))^{1+\epsilon}).
    \end{split}
\end{equation*}
Next, using the union bound, we get
\begin{equation*}
    \begin{split}
        P(\underset{i,j}{\max} |\x_i^T \x_j/\trace(\inputcov) - \delta_{i,j}| > r(n)) \leq n^2 \exp(-C \ (\log(n))^{1+\epsilon}).
    \end{split}
\end{equation*}
And because $\epsilon>0$, for any $N \in \mathbb{N}_+$, we have that \begin{equation*}
\sum_{n=N}^{\infty}  n^2 \exp(-C \ (\log(n))^{1+\epsilon}) < \infty.
\end{equation*}
which allows us to apply the Borel-Cantelli Lemma. Hence,
\begin{equation}
\underset{i,j}{\max} \left|\x_i^T \x_j /\trace(\inputcov) - \delta_{i,j}\right| \leq
\frac{1}{\sqrt{2}} n^{-\beta/2}  (\log(n))^{(1+\epsilon)/2} ~~ a.s. ~ \text{as}~ n \to \infty,
\label{eq:concentrationeq}
\end{equation}
which concludes the first step of the proof.
 Next, we already know from the previous discussion that for any $n$ sufficiently large, \\$P(\mathcal{E}_{\textbf{X}}) \geq 1-  n^2 \exp(-C \ (\log(n))^{(1+\epsilon)/2})$. Furthermore, 
 because $X$ is independently drawn from the same distribution as $\xind{i}$, 
 $P(\EXset\cup \Uset) \geq 1 -  (n+1)^2 \exp(-C \ (\log(n))^{(1+\epsilon)/2})$. Hence, for any $n$ sufficiently large,
 
 \begin{equation*}
 \begin{split}
      P(\mathcal{E}_{\textbf{X}, \Xrv} | \mathcal{E}_{\textbf{X}})  = \frac{P(\EXset \cup \Uset)}{P(\mathcal{E}_{\textbf{X}})}  &\geq   \left[1 -  (n+1)^2 \exp(-C \ (\log(n))^{(1+\epsilon)}) \right]
      \end{split}
 \end{equation*}
This completes the first case of the proof, i.e. where $\prob_X \in \probsetcov$.

%
%

\paragraph{Case 2: Distribution $\prob_X \in \probsetsphere$}
First, note that the case where $i = j$ is clear. Let $s_i = \frac{\x_i}{\|\x_i\|_2}$ and $\z_i = \frac{\x_i}{\sqrt{\trace(\inputcov)}}$. Since we are in the Euclidean space, the inner product is given by
\begin{equation*} 
(s_i^Ts_j)^2 = \frac{(\z_i^T\z_j)^2}{\|\z_i\|_2^2 \|\z_j\|_2^2 }.
\end{equation*}
Due to Equation \eqref{eq:concentrationeq}, we have that $|~ \|\z_i\|_2^2 -1 | \leq n^{-\beta/2}  (\log(n))^{(1+\epsilon)/2}  ~ a.s. ~ \text{as}~ n \to \infty$ and \\
$(\z_i^T\z_j)^2 \leq ( n^{-\beta/2}  (\log(n))^{(1+\epsilon)/2})^2a.s. ~\text{as}~ n \to \infty$. Therefore,
\begin{equation*} 
(s_i^Ts_j)^2 \leq  ( n^{-\beta/2}  (\log(n))^{(1+\epsilon)/2})^2~~ a.s. ~ \text{as}~ n \to \infty.
\end{equation*}
The rest of the proof then follows straight forwardly. 
\qed

\begin{proof}[Proof of Lemma \ref{lm:concentration}]
Let 
\begin{equation*}
\label{eq:concentration_at _infty}
f: \mathbb{R}^d \to \mathbb{R}, \x \to \sqrt{\x^T M_+ \x / \trace(M_+)} =\frac{1}{\sqrt{\trace(M_+)}}\|M_+^{1/2} \x\|_2.
\end{equation*}
We know that $f$ is $\lambda_{\max}(M_+)/\sqrt{\trace(M_+)}$-Lipschitz continuous and hence also a $\sqrt{\opnorm{M}}/\sqrt{\trace(M_+)}$-Lipschitz continuous. For the case where the entries $x$ are bounded i.i.d. random variables, we can use the simple fact that the norm is convex in order to apply Corollary~4.10 in \cite{Ledoux2001} and Proposition 1.8 in in \cite{Ledoux2001}. For the case where the entries are normally distributed we can apply Theorem~\rom{5}.\rom{1} in \cite{Milman86}. As a result, we can see that there exists a constant $C_4>0$ independent of $M$, such that
\begin{equation*}
    \label{eq:concentration}
    P \left( \left|\sqrt{x^T M x/\trace(M_+)} - \sqrt{\trace(M)/\trace(M_+)} \right| > r \right) \leq 4 \exp(4 \pi) \exp(-C_4 \trace(M_+) r^2).
\end{equation*}
The proof then follows straight forwardly following line by line the proof of Lemma~A.2 in \cite{ElKaroui10}. 

\end{proof}

\subsection{Proof of Lemma \ref{lm:lmmainthm}}
\label{sec:prooflmmain}
For any $ j\leq \mc+1,$ $ \alpha = (i_1,i_2)$, let $g_j^{(\mathbf{\alpha})}$ denote the partial derivatives $g_j^{(\mathbf{\alpha})}(x,y) = \frac{\partial^{|\alpha|}}{\partial t_1^{i_1} t_2^{i_2}} g_j(t_1,t_2)|_{x,y}$.
Define $\mct = \floor{2/\beta}$
First of all, note that due to Lemma \ref{lm:innerpr}, for any $\delta, \delta'>0$ and $n$ sufficiently large, for any $\Zrv \in \UsetZ$ 
\begin{equation*}
\begin{split}
 \textrm{for all}~ i \neq j: ~~(\zind{i},\zind{j}) &\in \convergesetres{\delta}{\delta'},\\
 \textrm{for all}~ i:  ~~~(\zind{i}, \Zrv) &\in \convergesetres{\delta}{\delta'}.
 \end{split}
\end{equation*}
As a result, we can make use of Assumption \Ckone. We are heavily going to make use of this fact throughout the proof.
The proof is separated into two steps where we first show 1. and then 2. using the expression for $\polf$ from the first step. 
\\\\
\textbf{Proof of the first statement}  
We construct a polynomial $\polf(\Zrv)$ using the power series expansion of $k$ from Assumption \Akone~ and in addition the Taylor series approximation of $g_\lidc$ around the point $(\cnbw,\cnbw)$. For any $n$ sufficiently large, 
we can write 
 

\begin{equation}
\label{eq:talyorapprox}
    \begin{split}
    \textrm{for all}~ i ~~ \kernf(\z_i, \Zrv) &=  \sum_{\lidc=0}^{\mct} (\z_i^\top\Zrv)^\lidc g_\lidc(\|\z_i\|_2^2,\|\z_j\|_2^2) 
    +(\zind{i}^\top \Zrv)^{\mct+1}  \remainder (\|\zind{i}\|_2^2, \|\Zrv\|_2^2, \zind{i}^\top \Zrv) \\
       &= \underbrace{\sum_{\lidc =0}^{\mct} (\z_i^\top\Zrv)^{\lidc} \sum_{ l_1+l_2 \leq \mct-\lidc} \frac{ g_{\lidc}^{(l_1,l_2)}(\cnbw,\cnbw)}{l_1
    ! l_2!} (\z_i^\top\z_i -\cnbw )^{l_1} (\Zrv^\top\Zrv -\cnbw)^{l_2}}_{=: 
    (\mvec_Z)_i} \\ 
    &+ \sum_{\lidc =0}^{\mct}(\z_i^\top\Zrv)^{\lidc} \sum_{ l_1+l_2  = \mct+1-\lidc} \frac{ g_{\lidc}^{(l_1,l_2)}(\eta^{\lidc,i}_{l_1,l_2})}{l_1
    ! l_2!} (\z_i^\top\z_i -\cnbw )^{l_1} (\Zrv^\top\Zrv -\cnbw )^{l_2}   \\ 
    &+(\zind{i}^\top \Zrv)^{\mct+1}  \remainder (\|\zind{i}\|_2^2, \|\Zrv\|_2^2, \zind{i}^\top \Zrv).
    \end{split}
\end{equation}
where $\eta^{\lidc,i}_{l_1,l_2} \in B_r(\cnbw,\cnbw)$ are points contained in the closed ball around the point $(\cnbw,\cnbw)$ with radius \\$r^2 = (\|\z_i\|_2^2-\cnbw)^2 + (\|\Zrv\|_2^2-\cnbw)^2 \to 0$. Hence, using the fact that $g_i$ is $\mct+1-i$-times continuously differentiable,  we can see that any $\|g_{\lidc}^{(l_1,l_2)}(\eta^{\lidc,i}_{l_1,l_2})\|$ is almost surely upper bounded by some constant. 

Let $\mvec_Z$ be the vector defined in Equation~\eqref{eq:talyorapprox} We define the polynomial $\polf$ as
\begin{equation*}
\polyf(\Zrv) := \ftrue(\X)^\top (K + \lambda I_n)^{-1} \mvec_Z
\end{equation*}
Note that $\polf$ is a linear combination of the terms $(\Zrv^\top\Zrv)^{p_1}(\z_i^\top\Zrv)^{p_2}$ with $p_1 + p_2 \leq \mct$, and hence a polynomial of $\Zrv$ of degree at most $2\mct$. If $g_\lidc$ are constant, i.e. the kernel is an inner product kernel, $\mvec_Z$ contains only the terms  $(\z_i^\top\Zrv)^{l}$ and hence $\polyf(\Zrv)$ is a polynomial of $\Zrv$ of degree at most $\mct$. 
 
 Next, because by assumption $\vert \ftrue \vert \leq \ubfunc$ is bounded on the support of $\prob_X$ by some constant $\ubfunc$, all entries of $\ftrue(\X)$ are bounded, and hence,
\begin{equation}
\label{eq:prop1_eq1}
\begin{split}
  \left| \EEobs \fhatridge(\sqrt{\bw} \Zrv) - \polf(\Zrv) \right| &= \left|\ftrue(\X)^\top (K + \lambda I_n)^{-1} (\kvec - \mvec_Z) \right| \leq  \ubfunc \| (K + \lambda I_n)^{-1} (\kvec -\mvec_Z) \|_1,
\end{split}
\end{equation}
where we have used that $\EEobs \fhatridge(\sqrt{\bw} \Zrv) = \ftrue(\X)^\top (K + \lambda I_n)^{-1} \kvec$. Further, by assumption $\lambda_{\min}(K + \lambda I_n)\geq \minEV >0  $, and hence,
\begin{equation*}
   \|(K + \lambda I_n)^{-1} (\kvec - \mvec_Z) \|_1 \leq \sqrt{n}\|(K + \lambda I_n)^{-1} (\kvec - \mvec_Z) \|_2 \leq \frac{\sqrt{n}}{\minEV}  \| \kvec - \mvec_Z\|_2  \leq \frac{n}{\minEV} \underset{i}{\max} \vert (\mvec_Z)_i- (\kvec)_i \vert.
\end{equation*}
Equation \eqref{eq:talyorapprox} yields
\begin{equation*}
    \begin{split}
    n \max_i\vert (\mvec_Z)_i - (\kvec)_i \vert &\leq  n \max_{i} \underbrace{ \left|\sum_{q =0}^{\mct} (\z_i^\top\Zrv)^{q} \sum_{ l_1+l_2  = \mct+1-q} \frac{ g_{q}^{(l_1,l_2)}(\eta^{q,i}_{l_1,l_2})}{l_1
    ! l_2!} (\z_i^\top\z_i -1 )^{l_1} (\Zrv^\top\Zrv -1)^{l_2}  \right| }_{=: B_1^{i}} \\ &+ n \max_{i\neq j}\underbrace{\left|(\zind{i}^\top \Zrv)^{\mct+1}  \remainder (\|\zind{i}\|_2^2, \|\Zrv\|_2^2, \zind{i}^\top \Zrv)   \right| }_{=: B_2^{i}}.\\ 
    \end{split}
\end{equation*}
In order to conclude the first step of the proof, we only need to show that both terms go to zero. 
First, we show that the term $n \max\limits_{i} B^{i}_1 \to 0$. Recall that $\left| g_{q}^{(l_1,l_2)}(\eta^{q,i}_{l_1,l_2}) \right|$ is upper bounded as $n\to \infty$ independent of $i$. Hence, we can apply Lemma \ref{lm:innerpr} which shows that for any integers $q,l_1$ and $l_2$ such that $q+l_1+l_2 = \mct+1$,
\begin{equation*}
    \left| (\z_i^\top\z_i -1 )^{l_1} (\Zrv^\top\Zrv -1 )^{l_2} (\z_i^\top\Zrv)^{q}\right| \lesssim (n^{-\beta/2}(\log(n))^{(1+\epsilon)/2})^{\mct+1} ~ 
\end{equation*}
which holds true for any positive constant $\epsilon >0$.
Finally, because $\mct = \floor{2/\beta}$,  $(\beta/2)(\mct+1) > 1$, and hence 
\begin{equation*}
    n \max_{i} B^{i}_1 \lesssim n (n^{-\beta/2} (\log(n))^{(1+\epsilon)/2})^{\mct+1} \to 0 .
\end{equation*} 
Furthermore, because $ \remainder $ is a continuous function and $\z_i,\Zrv$ are contained in a closed neighborhood around of $(1,1,0)$, $r(\|z_i\|,\|\Zrv\|,\z_i^\top\Zrv)$ is upper bounded by some constant independent of $i$ as $n \to \infty$. Therefore, we also have that
\begin{equation*}
n \max_{i} B^{i}_2 \lesssim n \max_{i} \left| (\z_i^\top\Zrv)^{q+1} \right| \lesssim  n \left(n^{-\beta/2}(\log(n))^{(1+\epsilon)/2}\right)^{\mct+1} ~  \to 0,
\end{equation*}
where we have again used Lemma \ref{lm:innerpr}. Hence, we can conclude the first step of the proof when observing that we have only assumed that $\Zrv \in \UsetZ$ and hence the convergence is uniformly. 
\\\\
\textbf{Proof of the second statement} 
We can see from the definition of $\polf$ and the subsequent discussion that
\begin{equation*}
    \begin{split}
        \left\| \polf \indicator_{\Zrv \in \UsetZ^{\comp}} \right\|_{\Ell_2(\prob_Z)} 
        &= \left\| \ftrue(\X)^\top (K + \lambda I_n)^{-1} \mvec_Z \indicator_{ \Zrv \in \UsetZ^{\comp}} \right\|_{\Ell_2(\prob_Z)} \lesssim n~\underset{i}{\max} \left\| (\mvec_Z)_i \indicator_{ \Zrv \in \UsetZ^{\comp}} \right\|_{\Ell_2(\prob_Z)} ,
    \end{split}
\end{equation*}
and furthermore, 
\begin{equation*}
    \max_{i} \left\| (\mvec_Z)_i \indicator_{\Zrv \in \UsetZ^{\comp}} \right\|_{\Ell_2(\prob_Z)} \lesssim \sum_{q + l_{1} + l_{2} \leq \mct} \left\| (z_{i}^\top \Zrv)^{q} (z_{i}^\top z_{i} - 1)^{l_{1}} (\Zrv^\top \Zrv - 1)^{l_{2}} \indicator_{ \Zrv \in \UsetZ^{\comp}} \right\|_{\Ell_2(\prob_Z)}.
\end{equation*}
We can decompose for $n$ sufficiently large,
\begin{equation*}
    \begin{split}
        &~~\left\| (z_{i}^\top \Zrv)^{q} (\|\z_i\|_2^2 - 1)^{l_{1}} \left( \left\|\Zrv \right\|_2^{2} - 1 \right)^{l_{2}} \indicator_{ \Zrv \in \UsetZ^{\comp}} \right\|_{\Ell_2(\prob_Z)} \\
&\leq \left\| ((\|\z_i\|_2^2 - 1) + 1)^{\frac{q}{2}} \left\|\Zrv \right\|_2^{q} (\|\z_i\|_2^2 - 1)^{l_{1}} \left( \left\|\Zrv \right\|^{2} - 1 \right)^{l_{2}} \indicator_{\Zrv \in \UsetZ^{\comp}} \right\|_{\Ell_2(\prob_Z)}\\
        &\lesssim \left\| \left\|\Zrv \right\|_2^{q} \left( \left\|\Zrv \right\|_2^{2} - 1 \right)^{l_{2}} \indicator_{\Zrv \in \UsetZ^{\comp}} \indicator_{ \left\|\Zrv\right\|_2^2 \leq 2}
\right\|_{\Ell_2(\prob_Z)} + \left\| \left\|\Zrv \right\|_2^{q} \left( \left\|\Zrv \right\|_2^{2} - 1 \right)^{l_{2}} \indicator_{ \left\|\Zrv\right\|_2^2 > 2} \right\|_{\Ell_2(\prob_Z)},
    \end{split}
\end{equation*}
where we have used Lemma \ref{lm:innerpr} in the second inequality. 
The first term vanishes trivially from the concentration inequality. Indeed, for $n$ sufficiently large, we have that
\begin{equation*}
    \begin{split}
        \left\| \left\|\Zrv \right\|_2^{q} \left( \left\|\Zrv \right\|_2^{2} - 1 \right)^{l_{2}} \indicator_{ \Zrv \in \UsetZ^{\comp}} \indicator_{ \left\{ \left\|\Zrv \right\|_2^{2} \leq 2 \right\}} \right\|_{\Ell_2(\prob_Z)}   
        \lesssim P(\UsetZ^{\comp}| \EZset)  \lesssim (n+1)^2 \exp(-C(\log (n))^{1 + \epsilon}) .
    \end{split}
\end{equation*}
For the second term, note that we can see from the proof of Lemma~\ref{lm:concentration} that there exists some constant $c > 0$, such that for $n$ sufficiently large,
\begin{equation*}
    P\left( \left\|\Zrv \right\|_2^{2} > r \right) \leq \exp(-c n^{\beta} r).
\end{equation*}
We can now apply integration by parts to show that 
\begin{equation*}
    \begin{split}
            &~~~~\left\| \left\|\Zrv \right\|_2^{q} \left( \left\|\Zrv \right\|_2^{2} - 1 \right)^{l_{2}} \indicator_{ \left\{ \left\|\Zrv \right\|_2^{2} > 2 \right\}} \right\|_{\Ell_2(\prob_Z)}^{2} \leq \int \left\|\Zrv \right\|_2^{q + 2l_{2}} \indicator_{ \left\{ \left\|\Zrv \right\|_2^{2} > 2 \right\}} dP\\
            & \lesssim \left[r^{q+2l_2}P(\|\Zrv\|_2^2 > r)\right]_{2}^{\infty} - \int_{2}^{\infty} r^{q + 2l_{2}-1} P\left( \left\|\Zrv \right\|_2^{2} > r \right) dr \lesssim \exp(-2cn^{\beta})\\
    \end{split}
\end{equation*}
Hence, combining these terms, we get the desired result
\begin{equation}
 n~\underset{i}{\max} \left\| (\mvec_Z)_i \indicator_{ \Zrv \in \UsetZ^{\comp}} \right\|_{\Ell_2(\prob_Z)} \lesssim n (n+1)^2  \exp(-C(\log (n))^{1 + \epsilon})  \to 0 ~~\textrm{as}~ n\to\infty.
\end{equation}

\qed

\subsection{Proof of Lemma \ref{lm:elkarouiextension}}
\label{sec:proofev}
As in \cite{ElKaroui10}, we separately analyze the off and on diagonal terms of $K$. Let $A$ be the off-diagonal matrix of $K$, with diagonal entries $A_{i,j} = (1- \delta_{i,j}) K_{i,j}$ and let $D$ be the diagonal matrix of $K$ with entries $D_{i,j} = \delta_{i,j} K_{i,j}$. We have
\begin{align*}
    \textrm{for all}~ i: ~~~&D_{i,i} := K_{i,i} = g(\|\z_i\|_2^2, \|\z_i\|_2^2,\z_i^\top\z_i), \\
    \textrm{for all}~ i\neq j:~~~ &A_{i,j} := K_{i,j} = g(\|\z_i\|_2^2, \|\z_j\|_2^2,\z_i^\top\z_j).
\end{align*}
Similarily, decompose $M$ into its off-diagonal, $M_A$, and its diagonal $M_D$. We have
\begin{equation}
\opnorm{K - M} \leq \opnorm{A - M_A} + \opnorm{D - M_D}
\end{equation}
We begin with the first term. Note that $ M_A$ has off-diangoal entries $(M_A)_{i,j} := \sum_{q =0}^\mc (\z_i^\top\z_j)^{q} g_q(\|z_i\|_2^2,\| z_j\|_2^2)$, and hence, 

\begin{equation*}
    \opnorm{ M_A - A} \leq \| M_A - A\|_F \leq n \underset{i,j}{\max} |(M_{A})_{i,j} - A_{i,j} | \lesssim n\underset{i,j}{\max}| (\z_j^\top\z_j -1 )^{m+1} | \to 0,
\end{equation*}
where we have the same argument as used in the proof of Lemma \ref{lm:lmmainthm} and the fact that the Assumptions \Akone-\Akthree ~imply Assumption \Ckone, as shown in Lemma \ref{lm:AC_assumptions}.

Next, note that we can write
\begin{equation*}
    M_D := \left[g(\cbw, \cbw, \cbw) - \sum_{q=0}^m g_q(\cbw, \cbw)~ + \sum_{q=0}^m \|\z_i\|_2^{2q} g_q(\|\z_i\|_2^2,\| \z_i\|_2^2) \right] \mathrm{I}_n
\end{equation*} Because $D-M_D$ is a diagonal matrix, for $n$ sufficiently large,
\begin{equation*}
\begin{split}
    \opnorm{ D - M_D}&= \max_{i} \left|g(\| \z_i\|_2^2, \| \z_i\|_2^2,\z_i^\top\z_i) - 
    g(\cbw, \cbw, \cbw) + \sum_{q=0}^m g_q(\cbw, \cbw)~ -  \sum_{q=0}^m \|\z_i\|_2^{2q} g_q(\|\z_i\|_2^2,\| \z_i\|_2^2)\right|
        \\
    &\leq \underbrace{\delta_L \sqrt{3} (\| \z_i\|_2^2 -\cbw)}_{T_1} + \sum_{q=0}^m\underbrace{[\|\z_i\|_2^{2q} - \cnbw^q] g_q(\|\z_i\|_2^2,\| \z_i\|_2^2)}_{T_2}+ \underbrace{[ g_q(\|\z_i\|_2^2,\| \z_i\|_2^2) - g_q(\cbw, \cbw)]}_{T_3} 
\end{split}
\end{equation*}
where we have used that by assumption $g$ is $\delta_L$-Lipschitz continuous on the restriction \\ $\{(x,x,x) | x \in [1-\delta_L, 1+ \delta_L]\} \subset \Omega$ for some $\delta_L >0$. Clearly $T_1 \to 0$  due to Lemma \ref{lm:innerpr}. Furthermore, by Assumption \Ckone, for any $q \leq m$, $g_q$ is continuously differentiable  and hence also Lipschitz continuous in a closed ball around $(\cbw,\cbw)$. Thus, $T_3 \to 0$. Hence, it is only left to show that $T_2 \to 0$, which is a consequence of the following claim.
\\\\
\textbf{Claim:} \textit{For any $\epsilon>0$ and any $q>0$,
\begin{equation*}
 \underset{i}{\max}~ \left| \frac{(\x_i^\top \x_i)^q}{ (\trace(\Sigma_d))^q} - 1 \right| \leq  c_q\max[n^{-\beta/2} (\log(n))^{(1+\epsilon)/2}, n^{-q\beta/2}(\log(n))^{q((1+\epsilon)/2)}] 
 \end{equation*}
where $c_q$ is a constant only depending on $q$.}
\\\\
\textbf{Proof of the claim:}
In order to prove the claim, recall that due to Lemma \ref{lm:innerpr}, for every $q>0$
\begin{equation*}
 \underset{i}{\max}~ \left| \left[\frac{(\x_i^\top \x_i)}{\trace(\Sigma_d)} - 1\right]^q \right| \leq ( n^{-\beta/2} (\log(n))^{(1+\epsilon)/2})^q.
 \end{equation*}
We prove the claim by induction. The case where $q=1$ holds trivially with $c_1 = 1$. For $q>1$, 
\begin{equation*}
    \underset{i}{\max}~  \left| \left[\frac{(\x_i^\top \x_i)}{\trace(\Sigma_d)} - 1\right]^q \right| =  \underset{i}{\max}~  \left| \frac{(\x_i^\top \x_i)^q}{ (\trace(\Sigma_d))^q} + \sum_{j=1}^q (-1)^j {q \choose j }  \left(\frac{\x_i^\top \x_i}{\trace(\Sigma_d)}\right)^{q-j}\right| \leq  (n^{-\beta/2} (\log(n))^{(1+\epsilon)/2})^q.
\end{equation*} 
Next, by induction, $ \underset{i}{\max}~ \left| \frac{(\x_i^\top \x_i)^j}{ (\trace(\Sigma_d))^j} - 1 \right| \leq  c_1\max\left[n^{-\beta/2} (\log(n))^{(1+\epsilon)/2}, n^{-j\beta/2}(\log(n))^{j((1+\epsilon)/2)}\right]$ for any $j< q$. Furthermore, $\sum_{j=1}^q(-1)^j {q\choose j} = -1$, which shows that
\begin{equation*}
\begin{split}
 &\underset{i}{\max}~ \left| \frac{(\x_i^\top \x_i)^q}{ (\trace(\Sigma_d))^q} + \sum_{j=1}^q (-1)^j {q \choose j } \left( \frac{(\x_i^\top \x_i)^q}{ (\trace(\Sigma_d))^q}\right)^{q-j} \right| \\ =  &\underset{i}{\max}~ \left| \frac{(\x_i^\top \x_i)^q}{ (\trace(\Sigma_d))^q} + \sum_{j=1}^q (-1)^j {q \choose j } \right[ \left( \frac{(\x_i^\top \x_i)^q}{ (\trace(\Sigma_d))^q}\right)^{q-j} -1 \left]+ \sum_{j=1}^q (-1)^j {q \choose j }  \right|\\
 \geq &\underset{i}{\max}~ \left| \frac{(\x_i^\top \x_i)^q}{ (\trace(\Sigma_d))^q} -1 \right| - \sum_{j=1}^q {q \choose j }  c_j\max\left[n^{-\beta/2} (\log(n))^{(1+\epsilon)/2}, n^{-j\beta/2}(\log(n))^{j((1+\epsilon)/2)}\right].
 \end{split}
 \end{equation*}
 Hence, combining these two results, 
 \begin{equation*}
      \underset{i}{\max}~ \left| \frac{(\x_i^\top \x_i)^q}{ (\trace(\Sigma_d))^q} -1\right| \leq (n^{-\beta/2} (\log(n))^{(1+\epsilon)/2})^q + \sum_{j=1}^q {q \choose j }  c_j\max \left[n^{-\beta/2} (\log(n))^{(1+\epsilon)/2}, n^{-j\beta/2}(\log(n))^{j((1+\epsilon)/2)} \right],
 \end{equation*}
which completes the induction and thus the proof
\qed

\subsection{Proof of Lemma \ref{lm:alphaexpbound}}
\label{sec:proofalpha}

We use the well known formula 

\begin{equation*}
    t^{\alpha}=c_\alpha\int_0^\infty(1-e^{-t^2x^2})x^{-1-\alpha}\df x
\end{equation*}
with
\begin{equation*}
    c_\alpha:=\left(\int_0^\infty(1-e^{-x^2})x^{-1-\alpha}\df x\right)^{-1}>0,
\end{equation*}
which holds for all $t \geq 0$.
Hence, for $t:=\Vert z_i-z_j\Vert_2^\alpha\geqslant0$, we can write
$$\Vert z_i-z_j\Vert_2^\alpha=c_\alpha\int_0^\infty(1-e^{-x^2\Vert z_i-z_j\Vert_2^2})x^{-1-\alpha}\df x.$$
We first study $\mu \in\mathbb{R}^n$ with $\sum_{1\leqslant i\leqslant n}\mu_i=0$. We have 
 \begin{align*}
    \mu^\top \Dalpha \mu = \sum_{1\leqslant i,j\leqslant n}\Vert z_i-z_j\Vert_2^\alpha\mu_i\mu_j & = \sum_{1\leqslant i,j\leqslant n}\mu_i\mu_jc_\alpha\int_0^\infty(1-e^{-x^2\Vert z_i-z_j\Vert_2^2})x^{-1-\alpha}\df x\\
    & = -c_\alpha\int_0^\infty x^{-1-\alpha} \sum_{1\leqslant i,j\leqslant n}\mu_i\mu_j e^{-x^2\Vert z_i-z_j\Vert_2^2}\df x.
\end{align*}
Next, note that the Gaussian kernel satisfies Assumptions \Akone-\Akthree~ since\\
${\exp(-\|\xk-\xkd\|_2^2) = \sum_{j=0}^{\infty} \frac{2^j}{j!}
(\xk^\top \xkd)^j \exp(-\|\xk\|_2^2 - \|\xkd\|_2^2)}$. Hence, we can conclude from Proposition \ref{prop:EVlowerbound} that for every $x \in
\mathbb{R}_+$ there exists a constant $c_{G,x}$, such that $\sum_{1\leqslant i,j\leqslant n}\mu_i\mu_j
e^{-x^2\Vert z_i-z_j\Vert_2^2} \geq \| \mu \|_2^2 c_{G,x} > 0$ almost surely as $n\to \infty$. Thus, we can conclude that
there exists a constant $\tilde{c} >0 $ independent of $n$, such that
\begin{equation}
\label{eq:eqalphaexpbound}
    \mu^\top \Dalpha \mu \leq -\tilde{c}\| \mu \|_2^2 ~~a.s.~ \text{as}~ n\to\infty.
\end{equation}

Because the set of vectors $\mu^T1 = 0$ span a $n-1$ dimensional subspace, we can apply the Courant–Fischer–Weyl min-max principle from which we can see that the second largest eigenvalue of the matrix $\Dalpha$ satisfies
$\lambda_{2} < -\tilde{c}$ almost surely as $n\to\infty$. Since the sum of the eigenvalues
$\sum_{i=1}^{n} \lambda_{i} = \trace(\Dalpha) = 0$, $\lambda_{1} > (n-1) \tilde{c}$ almost surely as $n\to\infty$, which concludes the proof.
\qed

\subsection{Proof of Lemma \ref{lm:ntk1}}
\label{sec:proofntk}

We start the proof with a discussion of existing results in the literature.
As shown in Appendix~E.1 in \cite{Arora19}, the homogeneity of $\sigma$ allows us to write
\begin{equation}
\label{eq:ntksimp1}
     \Sigma^{(i)}(\x,\xkd) = c_{\sigma} \left(\Sigma^{(i-1)}(\x,\x) \Sigma^{(i-1)}(\xkd,\xkd)  \right)^{k/2}t_{\sigma}\left( \frac{\Sigma^{(i-1)}(\x,\xkd)}{\sqrt{\Sigma^{(i-1)}(\x,\x) \Sigma^{(i-1)}(\xkd,\xkd)}} \right),
\end{equation}
with 
\begin{equation*}
\label{eq:hatsimga}
   t_{\sigma}(\rho) = \underset{(u,v) \sim \mathcal{N}(0, \tilde{\Lambda}(\rho))}{\mathbb{E}}~\left[ \sigma(u) \sigma(v) \right] 
\end{equation*}
and 
$\tilde{\Lambda}(\rho) = \begin{pmatrix} 1& \rho \\ \rho & 1 \end{pmatrix} $. Furthermore, because $\dot{\sigma}$ is a $k-1$-homogeneous, we can analogously write
\begin{equation*}
\label{eq:ntkdotsigma}
    \dot{\Sigma}^{(i)} =c_{\dot{\sigma}} \left(\Sigma^{(i-1)}(\x,\x) \Sigma^{(i-1)}(\xkd,\xkd)  \right)^{(k-1)/2} t_{\dot{\sigma}}\left( \frac{\Sigma^{(i-1)}(\x,\xkd)}{\sqrt{\Sigma^{(i-1)}(\x,\x) \Sigma^{(i-1)}(\xkd,\xkd)}} \right),
\end{equation*}
with $t_{\dot{\sigma}}(\rho) = \underset{(u,v) \sim \mathcal{N}(0, \tilde{\Lambda}(\rho))}{\mathbb{E}}~\left[ \dot{\sigma}(u) \dot{\sigma}(v) \right] $. The function $t_{\sigma}$ is called the dual of the activation function (Definition~4 in \cite{Daniely16}). In particular, since by assumption $\sigma$ and $\dot{\sigma}$ have a Hermite polynomial extension, Lemma~11 in \cite{Daniely16} provides some useful properties which hold for both $t_{\sigma}$ and $t_{\dot{\sigma}}$. We only state them for $t_{\sigma}$:
\begin{enumerate}
    \item Let $a_i \in \mathbb{R}$ be the coefficients of the Hermite polynomial extension of $\sigma$, then $t_{\sigma}(\rho) = \sum_{i=0}^{\infty} a_i^2 \rho^i$
    \item The function $t_{\sigma}$ is continuous in $[-1,1]$ and smooth in $(-1,1)$
    \item The image of $t_{\sigma}$ is $[-\gamma, \gamma ]$ with $\gamma = \underset{v \sim \mathcal{N}(0, 1)}{\mathbb{E}}~\left[ \sigma(v)^2 \right] = 1/c_{\sigma}$
    \item We have that $t_{\sigma}(1) = \underset{v \sim \mathcal{N}(0, 1)}{\mathbb{E}}~\left[ \sigma(v)^2 \right] = c_{\sigma}^{-1}$
\end{enumerate}
Based on this discussion, we now prove the lemma.
 In a first step, we derive a closed form expression for $\Sigma^{(i)}(\x,\x)$. Based on the discussion above and particularly  Equation \eqref{eq:ntksimp1} we can see that
\begin{equation*}
    \Sigma^{(i)}(\x,\x) = \left(\Sigma^{(i-1)}(\x,\x)\right)^k, 
\end{equation*} 
and by induction, we get that
\begin{equation}
\label{eq:ntklipschitz}
    \Sigma^{(i)}(\x,\x) = (\x^\top \x)^{k^i}.
\end{equation}
Therefore, Equation \eqref{eq:ntksimp1} becomes 
\begin{equation*}
\label{eq:ntkproof2}
    \Sigma^{(i)}(\x,\xkd) = c_{\sigma} (\x^\top \x)^{\frac{k^{i}}{2}}(\xkd^\top \xkd)^{\frac{k^{i}}{2}}t_{\sigma}\left( \frac{\Sigma^{(i-1)}(\x,\xkd)}{(\x^\top \x)^{\frac{k^{i-1}}{2}}(\xkd^\top \xkd)^{\frac{k^{i-1}}{2}}} \right).
\end{equation*} 
The goal is now to show that whenever $\xk,\xkd \neq 0$, $\Sigma^{(i)}(\x,\xkd)$ can be expressed as a sum of the form
\begin{equation}
\label{eq:ntkgi}
    \begin{split}
        \Sigma^{(i)}(\xk,\xkd) 
        &= \sum_{j=0}^{\infty} (\xk^\top \xkd)^j \sum_{l=-{\infty}}^{\infty} \eta_{j,l}^{(i)} (\lvert\lvert \xk\rvert\rvert_2^2 \lvert\lvert \xkd\rvert\rvert_2^2)^{l/2}
    \end{split}
\end{equation} 
with $\eta_{j,l}^{(i)} \geq 0$. 
We prove by induction. In a first step, note that the case where $i=0$ holds trivially true.
Next, assume that Equation~\eqref{eq:ntkgi} holds true for $\Sigma^{(i-1)}$. Due to the above discussion, $t_{\sigma}$ can be expressed as a Taylor series around $0$ with positive coefficients $a_i^2$. Thus, 
\begin{equation*}
    \begin{split}
\Sigma^{(i)}(\x,\xkd) &= c_{\sigma} (\x^\top \x)^{\frac{k^{i}}{2}}(\xkd^\top \xkd)^{\frac{k^{i}}{2}} \sum_{m=0}^{\infty} a_m^2 \left(\frac{\Sigma^{(i-1)}(\x,\xkd)}{(\lvert\lvert \xk\rvert\rvert_2^2\lvert\lvert \xkd\rvert\rvert_2^2)^{\frac{k^{i-1}}{2}}} \right)^m \\
&= c_{\sigma} (\lvert\lvert \x\rvert\rvert_2^2)^{\frac{k^{i}}{2}}(\lvert\lvert \xkd\rvert\rvert_2^2)^{\frac{k^{i}}{2}} \sum_{m=0}^{\infty} a_m^2 \left(\frac{\sum_{j=0}^{\infty} (\x^\top \xkd)^j \sum_{l=-\infty}^{\infty} \eta_{j,l}^{(i-1)} (\lvert\lvert \xk \rvert\rvert_2^2\lvert\lvert \xkd\rvert\rvert_2^2)^{l/2}}{(\lvert\lvert \xk\rvert\rvert_2^2\lvert\lvert \xkd\rvert\rvert_2^2)^{\frac{k^{i-1}}{2}}} \right)^m \\
&=c_{\sigma} (\lvert\lvert \x\rvert\rvert_2^2)^{\frac{k^{i}}{2}}(\lvert\lvert \xkd\rvert\rvert_2^2)^{\frac{k^{i}}{2}} \sum_{m=0}^{\infty} a_m^2 \left(\sum_{j=0}^{\infty} (\x^\top \xkd)^j \sum_{l=-\infty}^{\infty} \eta_{j,l}^{(i-1)} (\lvert\lvert \xk \rvert\rvert_2^2\lvert\lvert \xkd\rvert\rvert_2^2)^{l/2 - k^{i-1}/2} \right)^m \\
&= \sum_{j=0}^{\infty} (\xk^\top \xkd)^j \sum_{l=-{\infty}}^{\infty} \eta_{j,l}^{(i)} (\lvert\lvert \xk\rvert\rvert_2^2 \lvert\lvert \xkd\rvert\rvert_2^2)^{l/2}.
    \end{split}
\end{equation*} 
In order to guarantee that the last equation holds true, we need to show that the above multi-sum converges absolutely. To see this, first of all note that
$\eta^{(i)}_{j,l}\geq 0$ because by assumption
$\eta_{j,l}^{(i)}\geq0$. Furthermore, given that $d>1$,
for any $\xk,\xkd$ we can find $\tilde{\xk},\tilde{x}'$ such that
$\lvert\lvert \xk \rvert\rvert_2^2 =\lvert\lvert \tilde{\xk}
\rvert\rvert_2^2$, $\lvert\lvert \xkd \rvert\rvert_2^2 =\lvert\lvert
\tilde{\xk}'\rvert\rvert_2^2$ and $\tilde{\xk}^\top \tilde{\xk}' = \lvert
\xk^\top \xkd \rvert$. Hence, the above multi-sum only consists of positive coefficients when evaluating at $\tilde{\xk},\tilde{\xk}'$ and thus converges absolutely which completes the induction.

Next, note
that any of the properties 1-4 from the above discussion also
hold true for $t_{\dot{\sigma}}$. Therefore, we can use exactly the same
argument for $\dot{\Sigma}^{(i)}$ to show that for any $\xk, \xkd \neq
0$,
\begin{equation*}
    \dot{\Sigma}^{(i)}(\xk,\xkd) = 
    \sum_{j=0}^{\infty} (\xk^\top \xkd)^j \sum_{l=-{\infty}}^{\infty} \dot{\eta}_{j,l}^{(i)} (\lvert\lvert \xk\rvert\rvert_2^2 \lvert\lvert \xkd\rvert\rvert_2^2)^{l/2}.
\end{equation*}
with $\dot{\eta}_{j,l}^{(i+1)} \geq 0$. Finally, we can conclude the proof because 
\begin{equation*}
     \kntk(\xk,\xkd) := \sum_{i=1}^{L+1} \Sigma^{(i-1)}(\xk,\xkd) \prod_{j=i}^{L+1} \dot{\Sigma}^{(j)}(\xk,\xkd)
\end{equation*}
and when using the same argument as used in the induction step above to show that the resulting multi-sum converges absolutely. 
\qed

\subsection{Additional lemmas}
\label{sec:AC_lemmas}

\begin{lemma}
\label{lm:AC_assumptions}
Any kernel which satisfies Assumption \Akone~and \Akthree~also satisfies Assumption \Ckone.
\end{lemma}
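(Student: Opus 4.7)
The plan is to construct the remainder $r$ required by \Ckone directly from the tail of the power series furnished by \Akone, and then verify its continuity at $(\cbw,\cbw,0)$. With $m = \lfloor 2/\beta\rfloor$, define
\[
r(u,v,t) \;:=\; \sum_{j=0}^{\infty} g_{m+1+j}(u,v)\,t^{j}.
\]
Because the finite index shift $j \mapsto j+m+1$ does not alter $\limsup_j |g_j(u,v)|^{1/j}$, this series has the same radius of convergence in $t$ as the original series $g(u,v,t) = \sum_{j=0}^{\infty} g_j(u,v)\,t^{j}$, so it converges absolutely on a product neighborhood of $(\cbw,\cbw,0)$ which we take to be the $\Omega_{\delta,\delta'}$ required in \Ckone. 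A reindexing immediately gives the desired identity $g(u,v,t) = \sum_{j=0}^{m} g_j(u,v)\,t^{j} + t^{m+1}\,r(u,v,t)$, and the differentiability of $g_i$ for $i \leq m$ in a neighborhood of $(\cbw,\cbw)$ carries over verbatim from \Akthree. Rotational invariance of $k$ is inherited from \Akone.

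The only nontrivial point is verifying continuity of $r$ at $(\cbw,\cbw,0)$. At $t = 0$ one has $r(u,v,0) = g_{m+1}(u,v)$, which is continuous at $(\cbw,\cbw)$ by \Akthree (interpreting zero-times continuously differentiable as $C^{0}$). To bound $|r(u,v,t) - g_{m+1}(u,v)|$ uniformly in $(u,v)$ as $t \to 0$, I would invoke the positive semi-definiteness of each $g_j$ from \Akone: a Cauchy-Schwarz inequality for PSD kernels yields $|g_j(u,v)| \leq \tfrac{1}{2}\bigl(g_j(u,u) + g_j(v,v)\bigr)$, reducing the analysis to the diagonal tails $\sum_{j \geq 1} g_{m+1+j}(a,a)\,|t|^{j}$ for $a \in \{u,v\}$. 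Using the identity
\[
\sum_{j\geq 0} g_{m+1+j}(a,a)\,s^{j} \;=\; s^{-(m+1)}\Bigl[\,g(a,a,s) - \textstyle\sum_{j=0}^{m} g_j(a,a)\,s^{j}\,\Bigr], \qquad s>0,
\]
together with the analyticity of $g(a,a,\cdot)$ (which forces $g(a,a,s) - \sum_{j=0}^{m+1} g_j(a,a)\,s^{j} = O(s^{m+2})$), and the continuity of $g$ and of each $g_j$ for $j \leq m+1$ on a compact neighborhood of $\cbw$, one obtains the uniform bound $|r(u,v,t) - g_{m+1}(u,v)| = O(|t|)$ for $(u,v)$ near $(\cbw,\cbw)$. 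Combined with continuity of $g_{m+1}$ at $(\cbw,\cbw)$, this gives continuity of $r$ at $(\cbw,\cbw,0)$.

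The main (modest) obstacle is making the Taylor-type remainder estimate uniform in $(u,v)$ rather than merely pointwise; this is precisely why the PSD property from \Akone has to enter, as it reduces the off-diagonal tail to a diagonal one, where uniform control follows cleanly from compactness and the standing continuity of the kernel.
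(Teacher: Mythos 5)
Your construction is correct and its skeleton matches the paper's: both take the remainder $r$ in \Ckone~to be the tail of the power series from \Akone, observe that the expansion with remainder then holds by reindexing wherever the series converges, and note that rotational invariance and the differentiability of the low-order $g_i$ carry over directly, so the whole content is the continuity of $r$ near $(\cbw,\cbw,0)$. Where you diverge is in how that continuity is obtained. The paper argues pointwise: for each fixed $(x,y)$ the map $z\mapsto g(x,y,z)$ is analytic, so the one-variable Taylor remainder $r_{x,y}$ is continuous in $z$, and continuity of $g$ gives joint continuity of $r$ off the plane $z=0$; joint continuity on the plane is then asserted rather briskly from the pointwise existence of $r_{x,y}(0)$. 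You instead prove a quantitative, uniform estimate $|r(u,v,t)-g_{m+1}(u,v)|=O(|t|)$ by using the positive semi-definiteness of each $g_j$ (Cauchy--Schwarz to pass to the diagonal, where $g_j(a,a)\ge 0$) and then controlling the diagonal tail uniformly over a compact neighborhood of $\cbw$. This is a genuinely different, and in my view more robust, treatment of exactly the step the paper glosses over, since separate continuity in $z$ plus continuity off $\{z=0\}$ does not by itself yield joint continuity; your PSD-based reduction supplies the missing uniformity, and as a byproduct gives continuity at all points $(u_0,v_0,0)$ near $(\cbw,\cbw)$, which is what \Ckone~actually asks for. Two small remarks: the clause ``analyticity of $g(a,a,\cdot)$ forces $O(s^{m+2})$'' is only a pointwise statement in $a$ and is not what delivers uniformity---what does the work is evaluating your tail identity at one fixed $s_1>0$ inside the convergence region, where nonnegativity of the diagonal coefficients and boundedness of $g(a,a,s_1)$ (equivalently of $k(x,x)$, from the standing continuity/\Aktwo) on the compact neighborhood bound $\sum_{j\ge 0} g_{m+1+j}(a,a)s_1^{j}$ uniformly, after which $\sum_{j\ge1}g_{m+1+j}(a,a)|t|^{j}\le (|t|/s_1)\sum_{j\ge1}g_{m+1+j}(a,a)s_1^{j}$ for $|t|\le s_1$; your closing paragraph indicates you intend exactly this, so it is a phrasing issue rather than a gap. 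Second, reading \Akthree~at $i=m+1$ as ``$0$-times continuously differentiable'' (i.e.\ $g_{m+1}$ continuous near $(\cbw,\cbw)$) is a reasonable but implicit interpretive step worth stating explicitly, since continuity of the limit value $r(u,v,0)=g_{m+1}(u,v)$ is needed to conclude.
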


\begin{proof}
 The only point which does not follow immediately is to show that $\remainder $ is a continuous function. For this, write $g$ as a function of the variables $x,y,z$, i.e.
 \begin{equation*}
  \rotkerfunc(x,y,z) =  \sum_{j=0}^{\infty} \rotkerfunccoeffj{j}(x,y) z^j.
   \end{equation*}
For every $x,y$, define the function $\rotkerfunc_{x,y}(z) = g(x,y,z)$. Due to the series expansion, we can make use of the theory on the Taylor expansion which  implies that for any $x,y$, $\rotkerfunc_{x,y}$ is a smooth function in the interior of $\convergeset{\delta}$ (using the definition from Assumption \Akone). Hence, we can conclude that there exists a function $\remainder _{x,y}$ such that 
\begin{equation*}
 \rotkerfunc_{x,y}(z) =   \sum_{j=0}^{m} \rotkerfunccoeffj{j}(x,y) z^j + (z)^{m+1} \remainder _{x,y}(z)
\end{equation*}
In particular, the smoothness of $\rotkerfunc_{x,y}(z)$ implies that $\remainder _{x,y}$ is continuous in the interior of $\{z : (x,y,z) \in \convergeset{\delta} \}$. Next, define
$\remainder (x,y,z) = \remainder _{x,y}(z)$ and note that that the continuity of $\rotkerfunc$ implies that $\remainder $ is continuous everywhere except for the plane $z=0$. Finally, because $ \remainder _{x,y}(0)$ exists point wise, we conclude that $\remainder $ exists and is a continuous function in the interior of $\convergeset{\delta}$.
\end{proof}

\begin{lemma}
\label{lm:AC_rbf}
Any RBF kernel $k(\xk,\xkd) = h(\|\xk - \xkd\|_2^2)$ with $h$ locally analytic around $2$ satisfies Assumption \Ckone.  
\end{lemma}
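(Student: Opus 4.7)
The plan is to directly construct the expansion required by Assumption \Ckone~using analyticity of $h$ around $2$. Write the RBF kernel as $k(\xk,\xkd) = g(\|\xk\|_2^2, \|\xkd\|_2^2, \xk^\top \xkd)$ with $g(u,v,w) = h(u+v-2w)$. Since on the sphere $\|x\|_2^2 \approx 1$ and $\xk^\top\xkd \approx 0$, the argument $u+v-2w$ of $h$ lies near $2$, which is exactly where $h$ is analytic. By local analyticity, there exists $\rho>0$ such that $h(t) = \sum_{k=0}^{\infty} \frac{h^{(k)}(2)}{k!}(t-2)^k$ converges absolutely for $|t-2|<\rho$.

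Next, I would substitute $t-2 = (u-1) + (v-1) - 2w$ and expand $((u-1)+(v-1)-2w)^k$ by the binomial theorem. Regrouping by powers of $w$ via absolute convergence (valid for $|u-1|+|v-1|+2|w|<\rho$, giving a neighborhood $\Omega_{\delta,\delta'}$ with $\delta,\delta'$ small enough), a short calculation yields
\begin{equation*}
g(u,v,w) = \sum_{j=0}^{\infty} \frac{(-2)^j}{j!}\, h^{(j)}(u+v)\, w^j.
\end{equation*}
Thus the coefficient functions take the clean form $g_j(u,v) = \frac{(-2)^j}{j!} h^{(j)}(u+v)$. Since $h$ is analytic in a neighborhood of $2$, so is each derivative $h^{(j)}$; in particular each $g_j$ is $C^\infty$ (hence $(m+1-j)$-times continuously differentiable) in a neighborhood of $(1,1)$, which verifies the smoothness clause of \Ckone.

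For the remainder, I would define
\begin{equation*}
r(u,v,w) := \sum_{j=0}^{\infty} g_{j+m+1}(u,v)\, w^j = \sum_{j=0}^{\infty} \frac{(-2)^{j+m+1}}{(j+m+1)!}\, h^{(j+m+1)}(u+v)\, w^j,
\end{equation*}
so that $g(u,v,w) = \sum_{j=0}^m g_j(u,v) w^j + w^{m+1} r(u,v,w)$. The same absolute convergence argument used above shows the series for $r$ converges uniformly on compact subsets of $\Omega_{\delta,\delta'}$, and since each term is continuous, the limit $r$ is continuous on $\Omega_{\delta,\delta'}$ and in particular near $(1,1,0)$. This gives all four requirements of \Ckone: rotational invariance, the power series expansion, smoothness of the $g_j$'s, and continuity of the remainder.

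The only genuinely delicate step is the interchange of the two summations (over $k$ and over $j$) when regrouping by powers of $w$, but this is a routine application of Fubini for absolutely convergent double series, justified by the bound $|a|+|b|+2|w|<\rho$ on the neighborhood $\Omega_{\delta,\delta'}$. Note that unlike Assumption \Akone, Assumption \Ckone~does not require the $g_j$ to be positive semi-definite kernels, so we do not need to verify any such property here — this is what makes the RBF case clean relative to the stronger conditions in Lemma~\ref{lm:AC_assumptions}.
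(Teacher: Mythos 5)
Your proof is correct and follows essentially the same route as the paper's: expand $h$ in its Taylor series around $2$, substitute $\|x-x'\|_2^2-2=(\|x\|_2^2-1)+(\|x'\|_2^2-1)-2\,x^\top x'$, and regroup by powers of $x^\top x'$, justifying the rearrangement by absolute convergence on a region of the form $|u-1|+|v-1|+2|w|<\rho$ (the paper justifies the identical rearrangement via a sign argument at a specially chosen pair $\tilde{x},\tilde{x}'$). Your closed form $g_j(u,v)=\frac{(-2)^j}{j!}h^{(j)}(u+v)$ — equivalently, the Taylor expansion of $h(u+v-2w)$ in $w$ — coincides with the paper's implicitly defined coefficient series and makes the smoothness of the $g_j$ and the continuity of the tail remainder immediate, which the paper leaves more implicit.
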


\begin{proof}
  Because by assumption $h$ has a local Taylor series around $2$, we can write
\begin{equation*}
\label{eq:lemmaRBFass1}
    h(\| \xk - \xkd \|_2^2) = \sum_{j=0}^{\infty} h_j ~ (\| \xk - \xkd \|_2^2-2)^j,
\end{equation*}
which converges absolutely for any $\left|\| \xk - \xkd \|_2^2-2 \right| \leq \tilde{\delta}$ where $\tilde{\delta}>0$ is the convergence radius of the Taylor series approximation.  Next, using $\| \xk - \xkd \|_2^2 -2 = \| \xk \|_2^2 -1 + \|  \xkd \|_2^2 -1 -2 \xk^\top\xkd$, we can make use of the Binomial series, which gives
\begin{equation}
\label{eq:lemmaRBFass12}
    h(\| \xk - \xkd \|_2^2) = \sum_{j=0}^{\infty}  h_j \sum_{i=0}^j  \sum_{l=0}^i {j \choose i} {i \choose l} (\| \xk \|_2^2 -1)^l  (\|  \xkd \|_2^2 -1)^{i-l} (-2 \xk^\top\xkd)^{j-i}.
\end{equation}
The goal is now to show that this multi series converges absolutely. Whenever $d>1$, we can choose $\tilde{\xk},\tilde{\xk}'$ from the set of convergent points such that $ \| \tilde{\xk}  \|_2^2 -1>0$ , $\| \tilde{\xk}'  \|_2^2 -1 >0$ and $ \tilde{\xk}^\top\tilde{\xk}' <0$. As a result, we can see that for any $j$, the sum
\begin{equation*}
\sum_{i=0}^j  \sum_{l=0}^i {j \choose i} {i \choose l} (\|  \tilde{\xk} \|_2^2 -1)^l  (\|  \tilde{\xk}' \|_2^2 -1)^{i-l} (-2 \tilde{\xk}^\top\tilde{\xk}')^{j-i}
\end{equation*}
is a sum of non negative summands. Hence, we get that the sum
\begin{equation*}
\label{eq:commentF1}
\sum_{j=0}^{\infty}  \sum_{i=0}^j  \sum_{l=0}^i h_j {j \choose i} {i \choose l} (\|  \tilde{\xk} \|_2^2 -1)^l   (\|  \tilde{\xk}' \|_2^2 -1)^{i-l} (-2 \tilde{\xk}^\top\tilde{\xk}')^{j-i}
\end{equation*} 
converges absolutely. Thus, we can arbitrarily reorder the summands:
\begin{equation}
\label{eq:lemmaRBFass2}
\begin{split}
    h(\|  \tilde{\xk} - \tilde{\xk}' \|_2^2) &=  \sum_{j=0}^{\infty} (\tilde{\xk}^\top\tilde{\xk}')^{j}  \sum_{i=j}^{\infty}   \sum_{l=0}^{i-j}  h_{i} (-2)^j {i \choose j} {i-j \choose l} (\|  \tilde{\xk} \|_2^2 -1)^l  (\|  \tilde{\xk}' \|_2^2 -1)^{i-l-j} \\
    &=:\sum_{j=0}^{\infty}(\tilde{\xk}^\top\tilde{\xk}')^{j} g_j(\|  \tilde{\xk} \|_2^2, \|  \tilde{\xk}' \|_2^2)
    \end{split}
\end{equation} 
In particular, we obtain that the sum from Equation \eqref{eq:lemmaRBFass2} converges absolutely for any $\xk,\xkd$ with \\$ \| \tilde{\xk}  \|_2^2 -1> \left|\| \xk  \|_2^2 -1\right|$ , $\| \tilde{\xk}'  \|_2^2 -1 > \left|\| \xkd  \|_2^2 -1 \right|$ and $ -\tilde{\xk}^\top\tilde{\xk}' > \left|\xk^\top\xkd\right|$. In fact, we can always find $\delta,\delta' >0$ such that any  $(\xk,\xkd) \in \mathbb{R}^d \times \mathbb{R}^d$, with $(\| \xk\|_2^2, \| \xkd\|_2^2) \in  [1-\delta, 1+\delta] \times [1-\delta, 1+\delta]$ and $\xk^\top \xkd \in  [-\delta',\delta']$ satisfies these constraints and hence the sum from Equation \eqref{eq:lemmaRBFass12} converges absolutely. We can then conclude the proof when noting that the functions $g_i$ are implicitly defined using the Taylor series expansion. Therefore, we can conclude that $g_i$ are smooth functions in a neighborhood of $(1,1)$. The rest of the proof follows then trivially.
\end{proof}

\end{document}